\documentclass[12pt]{article}
\usepackage[a4paper, margin=2.5cm]{geometry}
\usepackage{amssymb,amsthm, amsmath}
\usepackage[hyphens]{url}
\usepackage{hyperref}

\newtheorem{theorem}{Theorem}[section]
\newtheorem{corollary}[theorem]{Corollary}
\newtheorem{lemma}[theorem]{Lemma}

\newtheorem{problem}[theorem]{Problem}
\newtheorem{conjecture}[theorem]{Conjecture}
\theoremstyle{definition}

\newcommand{\C}{\mathbb C}
\newcommand{\R}{\mathbb R}
\newcommand{\F}{\mathbb F}
\newcommand{\Q}{\mathbb Q}
\newcommand{\Z}{\mathbb Z}
\newcommand{\pts}{\mathcal P}
\newcommand{\cvs}{\mathcal C}
\newcommand{\cv}{C}
\newcommand{\eps}{\varepsilon}

\newcommand{\arxiv}[1]{\href{http://arxiv.org/abs/#1}{\tt arXiv:#1}}

 %=========================================================================================
 %=========================================================================================
 %=========================================================================================
 %=========================================================================================
 %=========================================================================================

\begin{document}
\title{A survey of Elekes-R\'onyai-type problems}
\author{Frank de Zeeuw
%\thanks{Partially supported by Swiss National Science Foundation Grants 200020-144531 and 200021-137574.}
}
\date{}
\maketitle

 %=========================================================================================
 %=========================================================================================
 %=========================================================================================
 %=========================================================================================
 %=========================================================================================

%\vspace{-15pt}
\begin{abstract}
We give an overview of recent progress around a problem introduced by Elekes and R\'onyai.
The prototype problem is to show that a polynomial $f\in \R[x,y]$ has a large image on a Cartesian product $A\times B\subset \R^2$, unless $f$ has a group-related special form.
We discuss this problem and a number of variants and generalizations. 
This includes the Elekes-Szab\'o problem, which generalizes the Elekes-R\'onyai problem to a question about an upper bound on the intersection of an algebraic surface with a Cartesian product, 
and curve variants, where we ask the same questions for Cartesian products of finite subsets of algebraic curves.

These problems lie at the crossroads of combinatorics, algebra, and geometry:
They ask combinatorial questions about algebraic objects, whose answers turn out to have applications to geometric questions involving basic objects like distances, lines, and circles, as well as to sum-product-type questions from additive combinatorics.
As part of a recent surge of algebraic techniques in combinatorial geometry, a number of quantitative and qualitative steps have been made within this framework.
Nevertheless, many tantalizing open questions remain.
\end{abstract}

\tableofcontents

\section{The Elekes-R\'onyai problem}

\subsection{Sums, products, and expanding polynomials}
\label{subsec:exppoly}

Erd\H os and Szemer\'edi \cite{ES83} introduced the following problem in 1983.
Given a finite set $A$ in some ring, is it true that the \emph{sumset} $A+A$ or the \emph{productset} $A\cdot A$ must be large?
The rationale is that for an arithmetic progression the sumset is small, but the productset is large, while for a geometric progession, the reverse is true.
Erd\H os and Szemer\'edi proved for $A\subset \Z$ that
\begin{align}\label{eq:sumprod}
\max\{|A+A|,|A\cdot A|\} =\Omega\left(|A|^{1+c}\right) 
\end{align}
for a very small $c>0$.
This statement was later generalized to $\R$, and the constant has over the years been improved to $4/3+c'$ for a small $c'>0$ \cite{So09, KS15}.

The intuition behind this statement is that a set cannot have many ``coincidences'' for both addition and multiplication.
A statement like \eqref{eq:sumprod} is not the only way to capture this intuition.
Elekes \cite{E98} suggested that, since polynomials combine addition and multiplication, for most polynomials $f\in \R[x,y]$ we should have
\begin{align}\label{eq:exppoly1} 
|f(A\times A)| =\Omega\left(|A|^{1+c}\right),
\end{align}
for any finite $A\subset \R$, with a constant $c>0$ that may depend on $f$.
More generally, we should have 
\begin{align}\label{eq:exppoly2} 
|f(A\times B)| =\Omega\left(n^{1+c}\right),
\end{align}
for $A,B\subset \R$ with $|A|=|B|=n$, and a similar bound when $A$ and $B$ have different sizes.

Of course, \eqref{eq:exppoly2} cannot hold for all polynomials, as it fails when $f(x,y)=x+y$ and $A$ and $B$ are arithmetic progressions with the same difference, or when $f(x,y)=xy$ and $A$ and $B$ are geometric progressions with the same ratio. 
More generally, if $f$ has the additive form 
\begin{align}\label{eq:additive}
f(x,y) = g(h(x)+k(y))
\end{align}
with univariate polynomials $g,h,k$, then one has $|f(A\times B)|=O(n)$ if one chooses $A$ and $B$ in such a way that $h(A)$ and $k(B)$ are arithmetic progressions.
Similarly, if $f$ has the multiplicative form
\begin{align}\label{eq:multiplicative}
f(x,y)=g(h(x)\cdot k(y)),
\end{align}
then $|f(A\times B)|=O(n)$ if $h(A)$ and $k(B)$ are geometric progressions.
We will call a polynomial \emph{additive} if it has the form in \eqref{eq:additive}, and \emph{multiplicative} if it has the form in \eqref{eq:multiplicative}.

Elekes \cite{E98} conjectured that the additive form \eqref{eq:additive} and the multiplicative form  \eqref{eq:multiplicative} are the only exceptions to the bound \eqref{eq:exppoly2}.
Elekes proved a weaker form of this statement in \cite{E98}, and he collaborated with R\'onyai \cite{ER00} to prove this conjecture in full.
We state an improved version of the result due to Raz, Sharir, and Solymosi \cite{RSS15a}.

\begin{theorem}[Elekes-R\'onyai, Raz-Sharir-Solymosi]\label{thm:ERRSS}
Let $f\in \R[x,y]$ be a polynomial of degree $d$ that is not additive or multiplicative.
Then for all $A,B\subset \R$ with $|A|=|B|=n$ we have
\[|f(A\times B)| =\Omega_d\left(n^{4/3}\right).\]
\end{theorem}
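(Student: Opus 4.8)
The plan is to lower-bound the number of \emph{collisions} $f(a,b)=f(a',b')$ over the grid $A\times B$, to reinterpret this count as an incidence count between points and low-degree algebraic curves, and to apply a Szemer\'edi--Trotter-type bound; the algebraic exceptions in the hypothesis will appear precisely as the situations in which that incidence bound breaks down.

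\emph{Collision counting.} Set $m=|f(A\times B)|$. The fibers of $f$ partition $A\times B$ into $m$ parts, so by the Cauchy--Schwarz inequality the number of quadruples
\[
N:=\#\bigl\{(a,a',b,b')\in A^{2}\times B^{2}:\ f(a,b)=f(a',b')\bigr\}
\]
is at least $|A\times B|^{2}/m=n^{4}/m$. Since $f$ is not additive it depends on both $x$ and $y$, so after deleting $O_d(1)$ elements of $A$ we may assume $f(a,\cdot)$ is a nonconstant polynomial of degree $\le d$ for every $a\in A$; then for fixed $a,b$ at most $d$ values $b'$ satisfy $f(a,b')=f(a,b)$, so the diagonal quadruples with $a=a'$ number only $O_d(n^{2})$. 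Hence the number $N'$ of off-diagonal collisions satisfies $N'\ge n^{4}/m-O_d(n^{2})$.

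\emph{Incidences.} For each ordered pair $(a,a')\in A^{2}$ with $a\ne a'$, let $\gamma_{a,a'}\subset\R^{2}$ be the zero set of $f(a,u)-f(a',v)\in\R[u,v]$; this is a genuine plane curve of degree $O_d(1)$ since $f(a,\cdot)$ is nonconstant. With $\mathcal P:=B\times B$ and $\Gamma$ the family of these $n^{2}-n$ curves, a point $(b,b')$ lies on $\gamma_{a,a'}$ exactly when $(a,a',b,b')$ is an off-diagonal collision, so $N'=I(\mathcal P,\Gamma)$. Splitting each $\gamma_{a,a'}$ into its $O_d(1)$ irreducible components gives a family $\Gamma'$ of $O_d(n^{2})$ irreducible curves of degree $O_d(1)$ with $I(\mathcal P,\Gamma)\le O_d(1)\cdot I(\mathcal P,\Gamma')$. \emph{Provided} no two distinct members of $\Gamma'$ coincide and at most $C_d$ members pass through any two given points, the family has two degrees of freedom with bounded multiplicity, and the Pach--Sharir incidence theorem for curves of bounded degree yields
\[
I(\mathcal P,\Gamma')=O_d\!\left(|\mathcal P|^{2/3}|\Gamma'|^{2/3}+|\mathcal P|+|\Gamma'|\right)=O_d\bigl(n^{8/3}\bigr).
\]
Chaining the inequalities gives $n^{4}/m-O_d(n^{2})\le O_d(n^{8/3})$, hence $m=\Omega_d(n^{4/3})$.

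\emph{The degeneracy analysis --- the main obstacle.} What remains, and what is the real content, is to show that the italicized hypothesis on $\Gamma'$ can be secured whenever $f$ is not additive or multiplicative, after deleting a bounded fraction of $A$ and $B$ (which only affects the constants). Two kinds of failure must be ruled out: (a) a single irreducible curve is a component of many of the $\gamma_{a,a'}$; and (b) many of the $\gamma_{a,a'}$ share a common pair of points. A shared component in case (a) is a polynomial identity relating $f(a,u)$ and $f(a',v)$, and tracking such identities along a one-parameter family of pairs $(a,a')$ forces a nontrivial functional equation on $f$; case (b) reduces to a similar analysis after restricting to fixed auxiliary curves. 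One then argues that any such functional equation makes the level curves of $f$ into the orbits of a one-dimensional algebraic group action, and over $\R$ the only one-dimensional groups are $\mathbb G_a$ and $\mathbb G_m$; pulling this structure back expresses $f$ as $g(h(x)+k(y))$ in the first case and as $g(h(x)\cdot k(y))$ in the second, i.e.\ $f$ is additive or multiplicative. The genuinely delicate step is exactly this translation from a combinatorial degeneracy of the curve family to an algebraic normal form, and making it quantitative enough to be compatible with the sharp incidence bound is what produces the exponent $4/3$ rather than the $1+c$ of the original Elekes--R\'onyai argument.
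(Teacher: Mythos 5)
Your counting scheme is exactly the one the paper outlines for the Raz--Sharir--Solymosi proof: the quadruple set $Q$, the Cauchy--Schwarz step $|Q|\geq n^4/|f(A\times B)|$, the curves defined by $f(a,u)=f(a',v)$ incident to the point set $B\times B$, and a Pach--Sharir-type incidence bound yielding $|Q|=O_d(n^{8/3})$ and hence $|f(A\times B)|=\Omega_d(n^{4/3})$. Up to that point the argument is sound. The gap is in the step you yourself flag as the main obstacle, and it is a genuine gap rather than a deferred verification: you assert that a degenerate curve family forces the level sets of $f$ to be orbits of a one-dimensional algebraic group action, and that ``over $\R$ the only one-dimensional groups are $\mathbb{G}_a$ and $\mathbb{G}_m$.'' That classification is false as stated: the circle group $SO(2,\R)$ is a third one-dimensional real algebraic group, not isomorphic to either of the other two over $\R$, and it is precisely the source of the third exceptional form $g\bigl((k(x)+l(y))/(1-k(x)l(y))\bigr)$ that genuinely appears in the Elekes--R\'onyai theorem for \emph{rational} functions. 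For polynomial $f$ this form does not occur, but ruling it out requires an argument (it is not additive or multiplicative over $\R$, yet would still defeat the incidence bound if it arose), not an appeal to a two-element classification. Taken literally, your sketch either misses an exceptional case or proves a statement that is false for rational $f$; either way the reduction from ``degenerate curve family'' to ``additive or multiplicative'' is not established.

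Two further remarks on the same step. First, the group-action heuristic is the qualitative Elekes--Szab\'o-style viewpoint; the proof that actually delivers the clean exponent $4/3$ with no loss in $d$ works more concretely, by prying the polynomials $g,h,k$ out of specific coefficients of $f(a,x)-f(a',y)$ regarded as polynomials in $a$ and $a'$ (a non-constant leading coefficient produces the $h$ of a multiplicative form; dependence on $a,a'$ only through the constant term produces an additive form). Second, your phrase ``after deleting a bounded fraction of $A$ and $B$'' is not quite the right surgery: the bad objects are \emph{pairs} $(a,a')$ generating high-multiplicity curves and \emph{pairs} of points lying on too many curves, and these need not be concentrated on a small subset of $A$ or $B$. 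One instead shows that the sets of exceptional pairs are negligible and bounds their incidence contribution separately, which is part of what makes this step delicate.
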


To be precise, the bound stated in \cite{ER00} was $|f(A\times B)|=\omega(n)$, but inspection of the proof leads to a bound of the form $|f(A\times B)|=\Omega(n^{1+c_d})$ with a constant $c_d>0$ depending on the degree of $f$.
Raz, Sharir, and Solymosi \cite{RSS15a} showed that this dependence on the degree of $f$ is not necessary, and moreover improved the constant significantly.
Their proof used a setup inspired by Sharir, Sheffer, and Solymosi \cite{SSS13}, which gave a similar improvement on a special case (see Theorem \ref{thm:ERSSS}).
The same bound $|f(A\times B)| = \Omega(n^{4/3})$ was obtained by Hegyv\'ari and Hennecart \cite[Proposition 8.3]{HH13} for the polynomial $f(x,y) = xy(x+y)$.\footnote{The result in \cite[Proposition 8.3]{HH13} has the same proof setup as \cite{SSS13}, but it appears to be somewhat isolated; it makes no reference to \cite{ER00}, and in turn is not referred to in \cite{SSS13} and \cite{RSS15a}. 
This may be because \cite{HH13} primarily concerns expansion bounds over finite fields.
The arXiv publication date of \cite{SSS13} is half a year before that of \cite{HH13}.}

The exceptional role played by the additive and multiplicative forms suggests that groups play a special role in this type of theorem.
We will touch on this in Section \ref{sec:ES}, but see Elekes and Szab\'o \cite[Subsection 1.2]{ES12} for further discussion.
%The moral is that ??
Other (older) expositions of Theorem \ref{thm:ERRSS} can be found in Elekes's magnificent survey \cite{E02}, and in Matou\v sek's book \cite[notes to Section 4.1]{M02}.

%In the rest of this section, we discuss extensions of Theorem \ref{thm:ERRSS}, some applications, and finally its proof.

\subsection{Extensions}
\label{subsec:ERextensions}

It was observed in \cite{SSZ13} that a bound like \eqref{eq:exppoly2} should also hold when $A$ and $B$ have different sizes, and this was proved in a weak sense.
In \cite{RSS15a} such an ``unbalanced'' form of Theorem \ref{thm:ERRSS} was proved:
If $f$ is not additive or multiplicative, and $A,B\subset\R$, then 
\begin{align}\label{eq:unbalanced}
|f(A\times B)| = \Omega_d\left(\min\left\{|A|^{2/3}|B|^{2/3}, |A|^2, |B|^2\right\}\right).
\end{align}

Another way in which Theorem \ref{thm:ERRSS} can be extended is by replacing polynomials by \emph{rational functions}.
This was indeed done in \cite{ER00}, but not in \cite{RSS15a}.
Here the exceptions include the same forms \eqref{eq:additive} and \eqref{eq:multiplicative} with $g,h,k$ rational functions, but surprisingly, a third special form shows up here, namely
\begin{align}\label{ratspec}
f(x,y) = g\left(\frac{k(x)+l(y)}{1-k(x)l(y)}\right)
\end{align}
with rational functions $g,k,l$.
It was pointed out in \cite{BT12} that over $\C$ this can be seen as a multiplicative form, 
because $g((k(x)+l(y))/(1-k(x)l(y))) = G(K(x)L(y))$ if we set $G(z)=(z-1)/(i(z+1))$, $K(x) = (1+ik(x))/(1-ik(x))$, and $L(y) = (1+il(y))/(1-il(y))$ (and if we do some tedious computation).

It remains an open problem to improve the bound $|f(A\times B)|=\omega(n)$ of \cite{ER00} for rational functions $f$.
For one special case, the rational function $f(x,y)=(x-y)^2/(1+y^2)$, the bound $|f(A\times A)|=\Omega(|A|^{4/3})$ was proved in \cite{SSVZ15}.
This was done in the context of the distinct distance problem for distances between points and lines; 
$f$ gives the distance between the point $(x,0)$ and the line spanned by the points $(y,0)$ and $(0,1)$.

\begin{problem}
Prove Theorem \ref{thm:ERRSS} for rational functions $f\in \R[x,y]$ that are not additive, multiplicative, or of the form \eqref{ratspec}.
\end{problem}

Yet another way to extend Theorem \ref{thm:ERRSS} is from $\R$ to $\C$.
Most of the proof in \cite{RSS15a} extends easily to $\C$, with the exception of the incidence bound used (see Subsection \ref{subsec:ERproof}), which can be replaced by the bound over $\C$ proved in \cite{SdZ15} (see Theorem \ref{thm:SZ}); the details are written down in \cite{Ze15}.
Thus Theorem \ref{thm:ERRSS} holds also over $\C$.

The exponent $4/3$ in Theorem \ref{thm:ERRSS} is most likely not optimal.
Elekes \cite{E99} in fact conjectured that the bound in Theorem \ref{thm:ERRSS} can be improved as far as $\Omega(n^{2-\eps})$,
but no exponent better than $4/3$ has been established for any polynomial.
Elekes \cite{E99} noted that for $f(x,y) = x^2+xy+y^2$ (and many other polynomials) and $A=B=\{1,\ldots,n\}$ we have  $|f(A\times B)| = \Theta(n^2/\sqrt{\log n})$ (see \cite[Chapter 6]{S15} for details), so perhaps the bound in Theorem \ref{thm:ERRSS} can even be improved to $\Omega(n^2/\sqrt{\log n})$.

Let us combine the above extensions to conjecture the ultimate Elekes-R\'onyai-type theorem (although see Section \ref{sec:variants} for further variants).

\begin{conjecture}
Let $f\in \C(x,y)$ be a rational function of degree\footnote{The maximum of the degrees of the numerator and denominator, assuming that these do not have a common factor.} $d$ that is not additive or multiplicative.
Then for all $A,B\subset \C$ with $|A|=|B|=n$ we have
\[|f(A\times B)| = \Omega_{d,\eps}\left(n^{2-\eps}\right).  \]
\end{conjecture}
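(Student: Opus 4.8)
\bigskip
\noindent\textbf{A proposed line of attack.}
As this is an open problem, the following is a plan rather than a proof. The plan is to reduce the conjecture to a near-optimal bound on the number of coincidences of $f$ on $A\times B$, and then to attack that bound by fibring it into an incidence problem between algebraic curves and a Cartesian grid. Write $M=|f(A\times B)|$, and for $c$ in the image write $m(c)$ for the number of $(a,b)\in A\times B$ with $f(a,b)=c$. Then $\sum_{c}m(c)=n^{2}$ and $\sum_{c}m(c)^{2}=Q$, where
\[
Q=\bigl|\{(a,b,a',b')\in A\times B\times A\times B:\ f(a,b)=f(a',b')\}\bigr|,
\]
so Cauchy--Schwarz gives $M\ge n^{4}/Q$. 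It therefore suffices to prove the coincidence bound $Q=O_{d,\eps}(n^{2+\eps})$. This target is essentially sharp: for $f(x,y)=x^{2}+xy+y^{2}$ and $A=B=\{1,\dots,n\}$ the estimate $M=\Theta(n^{2}/\sqrt{\log n})$ quoted above forces $Q=\Omega(n^{2}\sqrt{\log n})$, so the argument can afford essentially no loss. The usual preliminary reductions (discarding the trivial cases where $f$ essentially depends on one variable, and normalising $f$) are by now routine, and I will assume them done.

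Next, fibre $Q$ over the pairs $(a,a')\in A^{2}$. For a fixed pair the locus $\{(v,w):\ f(a,v)=f(a',w)\}$ is a plane algebraic curve $\gamma_{a,a'}$ of degree $O_{d}(1)$, and
\[
Q=\sum_{(a,a')\in A^{2}}\bigl|\gamma_{a,a'}\cap(B\times B)\bigr|.
\]
When $a=a'$ the curve $\gamma_{a,a}$ contains the diagonal $\{v=w\}$ and its residual component has degree $O_{d}(1)$, so the $n$ diagonal pairs contribute only $O(n^{2})$ to $Q$. Everything then comes down to showing that the roughly $n^{2}$ off-diagonal curves $\gamma_{a,a'}$, each of bounded degree, together meet the grid $B\times B$ in at most $O_{d,\eps}(n^{2+\eps})$ points --- i.e.\ in $O(n^{\eps})$ points per curve on average.

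The main obstacle is that this step cannot be carried out with the Szemer\'edi--Trotter-type incidence bound (over $\R$, or over $\C$ via Theorem~\ref{thm:SZ}) that powers Theorem~\ref{thm:ERRSS}: that bound only yields $Q=O\bigl((n^{2})^{2/3}(n^{2})^{2/3}\bigr)=O(n^{8/3})$, hence $M=\Omega(n^{4/3})$, and it is genuinely tight for generic point--curve configurations. To reach $n^{2+\eps}$ one needs a near-linear bound on incidences between a bounded-degree algebraic curve and an arbitrary $n\times n$ Cartesian grid, valid unless the curve has a special ``grid-rich'' shape. I would set this up as a dichotomy. Either all but an $n^{-\eps}$-fraction of the curves $\gamma_{a,a'}$ are grid-poor --- meeting $B\times B$ in $O(n^{\eps})$ points, morally a divisor-type phenomenon, since a bounded-degree curve with many points on $\{1,\dots,n\}^{2}$ produces many factorisations or representations --- in which case $Q=O(n^{2+\eps})$ at once; or a positive proportion of the curves are grid-rich, in which case one exploits that they all arise from the single function $f$, intersects them with translates, extracts a common pencil of curves (or a pencil of parallel lines), probably via a Balog--Szemer\'edi--Gowers and Freiman-type argument applied to the rich subfamily, and reverse-engineers $f$ into the additive form~\eqref{eq:additive} or the multiplicative form~\eqref{eq:multiplicative}, with \eqref{ratspec} absorbed into the multiplicative case over $\C$ as explained above --- contradicting the hypothesis.

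The hard part --- and the reason no exponent beyond $4/3$ is known for any single polynomial --- is exactly this quantitative core: showing that the bounded-degree curves coming from a fixed non-special $f$ cannot be simultaneously rich on an arbitrary Cartesian grid, with a power-of-$n$ improvement over Szemer\'edi--Trotter rather than the $n^{\eta}$ saving available in Elekes--Szab\'o-type theorems. I expect this will demand a genuinely new ingredient --- a fusion of the polynomial method, additive combinatorics, and number-theoretic bounds for integer and rational points on curves --- rather than a refinement of the incidence-geometry machinery that suffices for Theorem~\ref{thm:ERRSS}. Once such a bound is available, the unbalanced strengthening in the spirit of~\eqref{eq:unbalanced}, and the extension to rational functions asked for above, should follow from the same analysis.
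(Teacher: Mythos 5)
The statement you are addressing appears in the paper as an open conjecture, not a theorem, so there is no proof in the paper to compare against; your own text correctly acknowledges this and offers a plan rather than a proof. The preliminary reductions you carry out are sound and match the known approach that underlies Theorem~\ref{thm:ERRSS}: the Cauchy--Schwarz step $M\ge n^4/Q$, the fibration of $Q$ over pairs $(a,a')$ into the curves $\gamma_{a,a'}$, the $O(n^2)$ handling of the diagonal, and the observation that the incidence bound of Theorem~\ref{thm:SZ} caps this method at $Q=O(n^{8/3})$ and hence at the exponent $4/3$. Your sharpness check via $f=x^2+xy+y^2$ is also correct.

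The gap is that the entire quantitative core is missing, and the dichotomy you sketch has no supporting argument for either horn. For the ``grid-poor'' horn, the divisor-type heuristic for a bounded-degree curve meeting a grid in $O(n^\eps)$ points is tied to the arithmetic of $\{1,\dots,n\}$; the conjecture is for arbitrary finite $A,B\subset\C$, where no such number-theoretic input is available, and where a single curve of the form $k(v)=k(w)+c$ meets $B\times B$ in $\Theta(n)$ points whenever $k(B)$ is an arithmetic progression --- without $f$ being additive. So richness of individual curves cannot be the right dichotomy; one must quantify \emph{simultaneous} richness of the whole family, and the only known tool for that (the degrees-of-freedom and common-component analysis of \cite{RSS15a}) saturates exactly at $4/3$. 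For the ``grid-rich'' horn, no mechanism is given for extracting the additive or multiplicative form of $f$ with a loss of only $n^{\eps}$: the Balog--Szemer\'edi--Gowers and Freiman-type arguments you invoke lose polynomial factors, so even if they could be made to work they would yield an exponent of the form $4/3+c$ rather than $2-\eps$. Your closing assessment --- that a genuinely new ingredient is required --- is accurate, but it is also an admission that the proposal does not contain a proof; as the paper notes, no exponent beyond $4/3$ is known for any single polynomial.
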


%It is not obvious what the bound should be in the unbalanced case where $A$ and $B$ have different sizes. A natural guess, based on \eqref{eq:unbalanced}, would be that for all $A,B\subset \C$ we have $|f(A\times B)| = \Omega_{d,\eps}\left(\min\left\{(|A||B|)^{1-\eps}, |A|^2, |B|^2\right\}\right)$. However, this would be odd, since the first expression would only dominate when $A$ and $B$ are very similar in size.

\subsection{Applications}
\label{subsec:ERapps}

\paragraph{Sum-product bounds.}
For a first consequence of Theorem \ref{thm:ERRSS}, we return to the sum-product problem mentioned at the start of Subsection \ref{subsec:exppoly}.
The following generalization of the bound \eqref{eq:sumprod} was proved by Shen \cite{S12}: If $A\subset \R$ and $f\in \R[x,y]$ is a polynomial of degree $d$ that is not of the form $g(\ell(x,y))$, with $g$ a univariate polynomial and $\ell$ a linear bivariate polynomial, 
then 
\begin{align}\label{eq:shen}
\max\{|A+A|,|f(A\times A)|\} =\Omega_d\left(|A|^{5/4}\right). 
\end{align}
For many polynomials, Theorem \ref{thm:ERRSS} improves this bound, and it also shows that in those cases one does not need to consider $|A+A|$ to conclude that $|f(A\times A)|$ is large. 

On the other hand, there are many polynomials that have the special form of Theorem \ref{thm:ERRSS}, but that do not have the special form of Shen.
Even in those cases, it may be possible to obtain a bound on $|f(A\times A)|$ independent of $|A+A|$; for instance, Elekes, Nathanson, and Ruzsa \cite{ENR99} prove $|f(A\times A)| = \Omega_d(|A|^{5/4})$ for $f(x,y) = x+y^2$ (and many similar functions).
Note that for this bound it is crucial that the Cartesian product is of the form $A\times A$ rather than $A\times B$.
It may be that such a bound holds for any $f$ that is not of the form $g(h(x)+h(y))$ or $g(h(x)\cdot h(y))$; this question does not seem to have been studied.

\paragraph{Distances between lines.}
As a corollary of their result, Elekes and R\'onyai \cite{ER00} made progress on the following problem of Purdy (see \cite[Section 5.5]{BMP05}):
Given two lines with $n$ points each, what is the minimum number of distances occurring between the two point sets?
This problem is a simpler variant of the distinct distances problem of Erd\H os \cite{E46}, which asks for the minimum number of distinct distances determined by a point set in the plane.
Erd\H os's problem was almost completely solved by Guth and Katz \cite{GK15} using new algebraic methods.

%REF TO PURDY?
In Purdy's problem there are two exceptional situations, when the two lines are parallel or orthogonal.
Indeed, if on two parallel lines one places two arithmetic progressions of size $n$ with the same common difference, then the number of distinct distances is linear in $n$.
On two orthogonal lines, say the $x$-axis and the $y$-axis, one can take the sets $\{(\sqrt{i},0):1\leq i\leq n\}$ and $\{(0,\sqrt{j}):1\leq j\leq n\}$ to get a linear number of distances.
The following theorem states that for all other pairs of lines there are considerably more distances.
Given $P_1,P_2\subset \R^2$, we write $D(P_1,P_2)$ for the set of Euclidean distances between the points of $P_1$ and the points of $P_2$.

\begin{theorem}[Elekes-R\'onyai, Sharir-Sheffer-Solymosi]\label{thm:ERSSS}
Let $L_1,L_2$ be two lines in $\R^2$ that are not parallel or orthogonal,
and let $P_1\subset L_1,P_2\subset L_2$ be finite sets of size $n$.
Then the number of distinct distances between $P_1$ and $P_2$ satisfies
\[|D(P_1,P_2)| = \Omega\left(n^{4/3}\right).\]
%\[|D(P_1,P_2)| = \Omega\left(\min\left\{|A|^{2/3}|B|^{2/3}, |A|^2, |B|^2\right\}\right).\]
%In particular, if $|P_1|=|P_2|=n$, then $|D(P_1,P_2)| = \Omega(n^{4/3})$.
\end{theorem}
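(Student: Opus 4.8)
The plan is to reduce directly to the Elekes--R\'onyai theorem (Theorem~\ref{thm:ERRSS}) by realizing the set of squared distances as the image of a bivariate polynomial on a Cartesian product. Since Euclidean distances are nonnegative and $t\mapsto t^2$ is injective on $[0,\infty)$, the number of distinct distances between $P_1$ and $P_2$ equals the number of distinct squared distances; so it is enough to write the squared-distance set as $f(A\times B)$ for a polynomial $f$ that is neither additive nor multiplicative, and then apply Theorem~\ref{thm:ERRSS}.

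First I would normalize coordinates. As $L_1$ and $L_2$ are not parallel they meet in a single point; put it at the origin, take $L_1$ to be the $x$-axis, and let $\theta$ be the angle $L_2$ makes with $L_1$, so $\cos\theta\neq0$ (not orthogonal) and $\cos\theta\neq\pm1$ (not parallel). Parametrizing $L_1$ by $x\mapsto(x,0)$ and $L_2$ by $y\mapsto(y\cos\theta,y\sin\theta)$, which are bijections onto the respective lines, we get $P_1=\{(x,0):x\in A\}$ and $P_2=\{(y\cos\theta,y\sin\theta):y\in B\}$ with $|A|=|B|=n$, and a one-line computation gives the squared distance
\[
f(x,y)=(x-y\cos\theta)^2+(y\sin\theta)^2=x^2-2(\cos\theta)\,xy+y^2 .
\]
Thus $|D(P_1,P_2)|=|f(A\times B)|$ with $f\in\R[x,y]$ of degree $2$.

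It remains to check that $f(x,y)=x^2-2cxy+y^2$, with $c=\cos\theta\notin\{0,\pm1\}$, is neither additive nor multiplicative; Theorem~\ref{thm:ERRSS} then gives $|f(A\times B)|=\Omega(n^{4/3})$. This is an elementary degree count. If $f=g(h(x)+k(y))$ then, since $\deg f=2$, the only possibility with $h,k$ non-constant is $\deg g=2$, $\deg h=\deg k=1$ (if $\deg g\le1$ then $f$ would have no $xy$ term, and $\deg g\ge3$ forces $f$ constant); writing $g(t)=\alpha t^2+\cdots$, $h(x)=px+\cdots$, $k(y)=ry+\cdots$ and comparing the coefficients of $x^2$, $y^2$, $xy$ yields $\alpha p^2=\alpha r^2=1$ and $\alpha pr=-c$, hence $c=\pm1$, a contradiction. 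Likewise if $f=g(h(x)\cdot k(y))$, a degree count leaves only $\deg g=1$, $\deg h=\deg k=1$ (the other cases make $f$ constant or a function of a single variable), but then $g(h(x)k(y))$ is linear in $x$, contradicting the $x^2$ term of $f$. So neither special form occurs.

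The only genuine obstacle is Theorem~\ref{thm:ERRSS} itself: granting it, the proof is just the coordinate normalization together with the short classification above. Historically the logical order was reversed --- Sharir, Sheffer, and Solymosi \cite{SSS13} first established this line-distance case with exponent $4/3$ by a direct incidence estimate between the points of $A\times B$ and the family of plane curves encoding the equal-distance relation, and that argument was subsequently abstracted to all non-special $f$ in \cite{RSS15a}. A self-contained proof avoiding Theorem~\ref{thm:ERRSS} would therefore have to carry out that incidence bound, the main subtlety there being to rule out the degenerate case in which too many of the curves coincide --- which is precisely where the parallel and orthogonal exceptions come back in.
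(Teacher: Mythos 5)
Your proof is correct and follows essentially the same route as the paper's own sketch: normalize the two lines, write the squared distance as an explicit quadratic polynomial on $A\times B$, check it is neither additive nor multiplicative, and invoke Theorem~\ref{thm:ERRSS}. The only difference is cosmetic (an arc-length parametrization of $L_2$ instead of the paper's $y=mx$ slope parametrization), and you usefully fill in the degree-count verification that the paper leaves as ``easy to verify.''
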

\begin{proof}[Proof sketch.]
We can assume that the lines are $y=0$ and $y=mx$, with $m\neq 0$.
The squared distance between $(s,0)$ and $(t,mt)$ is
\[f(s,t) = (s-t)^2+m^2t^2.
%= s^2-2st+(m^2+1)t^2.
 \]
It is easy to verify that the polynomial $f(s,t)$ is not additive or multiplicative, 
so Theorem \ref{thm:ERRSS} implies the stated bound.
\end{proof}

Elekes and R\'onyai first proved a superlinear bound in the case $|P_1| = |P_2|$ as a consequence of their result in \cite{ER00}, thus solving Purdy's problem, in the qualitative sense of distinguishing the special pairs of lines.
Elekes \cite{E99} then\footnote{The chronology is somewhat confusing here. The paper \cite{ER00} was published in 2000, and \cite{E99} in 1999. However, \cite{E99} refers back to \cite{ER00} and makes it clear that \cite{E99} is an improvement on a special case of \cite{ER00}.} quantified the proof from \cite{ER00} in this special case to obtain a short proof of the explicit bound $\Omega(n^{5/4})$, noting \cite{E02} that Brass and Matou\v sek asked for such a ``gap theorem''.
An unbalanced form was proved in \cite{SSZ13}.
The bound in Theorem \ref{thm:ERSSS} was obtained by Sharir, Sheffer, and Solymosi \cite{SSS13}, using a proof inspired by that of Guth and Katz \cite{GK15}\footnote{A preprint version of \cite{GK15} became available in 2010.}. 

An earlier version of \cite{SSS13} used the Elekes-Sharir transformation from \cite{ES11} that was crucial in \cite{GK15} to connect distances with incidences; it was then observed that in Purdy's problem a considerably easier incidence problem can be obtained, and also that the Elekes-Sharir transformation can be bypassed. 
The result was a proof that is even simpler than that of \cite{E99}, and its simplicity allowed for many generalizations, including Theorem \ref{thm:ERRSS} and many other results in this survey.

\paragraph{Directions on curves.}
The distinct directions problem asks for the minimum number of distinct directions determined by a non-collinear point set in the plane.
It is superficially similar to the distinct distances problem, in the sense that it asks for the minimum number of distinct values of a function of pairs of points in the plane.
However, it was solved exactly by Ungar \cite{U82} in 1982: Any non-collinear set $P$ in $\R^2$ determines at least $|P|-1$ distinct directions.

This leaves the more difficult \emph{structural} question: What is the structure of sets that determine few distinct directions?
Let us write $S(P)$ for the set of directions (or slopes) determined by $P\subset \R^2$.
Elekes \cite{E99b} conjectured that if $|S(P)| = O(|P|)$, then $P$ must have many points on a conic; even in the weakest form, where ``many'' is six, this is unknown.
Elekes \cite{E99b} showed that a (very) restricted version of this conjecture follows from Theorem \ref{thm:ERRSS}: If $P$ lies on the graph of a polynomial of degree at most $d$ and has $|S(P)| = O_d(|P|)$, then the polynomial must be linear or quadratic.
See \cite[Subsection 3.3]{E02} for a detailed discussion.
We state here the improvement of this result from \cite{RSS15a}, obtained as a consequence of Theorem \ref{thm:ERRSS}.
In Subsection \ref{subsec:collineartriples} we will discuss the same question for points sets on arbitrary algebraic curves.

\begin{corollary}\label{cor:directionsongraphs}
Let $P$ be a finite point set that is contained in the graph $y=g(x)$ of a polynomial $g\in \R[x]$ of degree $d\geq 3$.
Then the number of distinct directions determined by $P$ satisfies
\[|S(P)| = \Omega_d\left(|P|^{4/3}\right). \]
\end{corollary}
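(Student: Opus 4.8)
The plan is to reduce the statement to Theorem~\ref{thm:ERRSS} applied to the \emph{difference quotient} of $g$. Write $n=|P|$ and let $A\subset\R$ be the set of $x$-coordinates of the points of $P$; since $P$ lies on a graph, distinct points have distinct $x$-coordinates, so $|A|=n$. The direction determined by two points $(a,g(a)),(b,g(b))$ of $P$ with $a\neq b$ is
\[
\frac{g(a)-g(b)}{a-b}=f(a,b),\qquad\text{where }f(x,y):=\frac{g(x)-g(y)}{x-y}\in\R[x,y]
\]
is a symmetric polynomial of degree $d-1$, because $x-y$ divides $g(x)-g(y)$ in $\R[x,y]$. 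Hence every element of $f(A\times A)$ is either a direction determined by $P$ or one of the at most $n$ diagonal values $f(a,a)=g'(a)$, so $|f(A\times A)|\le|S(P)|+n$, i.e. $|S(P)|\ge|f(A\times A)|-n$. If $f$ is neither additive nor multiplicative, Theorem~\ref{thm:ERRSS} gives $|f(A\times A)|=\Omega_d(n^{4/3})$, and since $\deg f=d-1=O_d(1)$ and $n=o(n^{4/3})$ the corollary follows.

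It therefore remains to check that $f$ is not additive and not multiplicative when $d\ge 3$; this is the only real content, and I would handle it by examining the top-degree homogeneous part of $f$. That part comes only from the leading term $c_dx^d$ of $g$, so it equals $c_d(x^{d-1}+x^{d-2}y+\cdots+y^{d-1})=c_d(x^d-y^d)/(x-y)$, a form of degree $d-1\ge 2$ that is \emph{squarefree} over $\C$ (its linear factors are $x-\zeta^ky$ for the $d$-th roots of unity $\zeta^k$, $k=1,\dots,d-1$, which are distinct) and has \emph{all} of its coefficients nonzero.

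Suppose first $f(x,y)=\phi(\psi(x)+\chi(y))$ with $\phi,\psi,\chi$ univariate of degrees $p,q,r$. Comparing degrees in each variable, $pq=\deg_xf=d-1=\deg_yf=pr$; since $f$ genuinely depends on both variables, $p,q,r\ge 1$ and $q=r$, and then the degree-$(d-1)$ part of $f$ is $c(ax^q+by^q)^p$ for nonzero constants $c,a,b$. If $p\ge 2$ this has a repeated linear factor, contradicting squarefreeness; if $p=1$ then $q=d-1\ge 2$, so $ax^q+by^q$ has a vanishing coefficient of $x^{d-2}y$ and cannot be a scalar multiple of $x^{d-1}+\cdots+y^{d-1}$. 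The multiplicative case $f(x,y)=\phi(\psi(x)\cdot\chi(y))$ is even easier: the same bookkeeping again forces $p,q,r\ge 1$ and $q=r$, so the leading form of $f$ is a nonzero multiple of $(x^qy^q)^p=x^{pq}y^{pq}$, which has total degree $2pq=2(d-1)>d-1=\deg f$, a contradiction. (The hypothesis $d\ge 3$ is essential: for $d=2$ one has $f=c(x+y)+c'$, which is additive — this is exactly the small number of directions on a parabola over an arithmetic progression.)

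The main obstacle, such as it is, is this special-form verification; once the leading-form dichotomy is set up it is routine, and everything else is a direct appeal to Theorem~\ref{thm:ERRSS}. An alternative to the explicit degree bookkeeping would be to quote a structural classification of additive and multiplicative polynomials, but the self-contained leading-form argument seems cleanest here.
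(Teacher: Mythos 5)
Your proposal is correct and follows exactly the paper's route: reduce to Theorem~\ref{thm:ERRSS} via the difference-quotient polynomial $f(x,y)=(g(x)-g(y))/(x-y)$ and check that $f$ is neither additive nor multiplicative for $d\geq 3$. The paper leaves that check as ``not hard to verify''; your leading-form argument (squarefreeness and all coefficients nonzero of $c_d(x^{d-1}+\cdots+y^{d-1})$ versus the forms $c(ax^q+by^q)^p$ and $c(abx^qy^q)^p$) supplies a correct and clean justification of it.
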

\begin{proof}[Proof sketch.]
The direction determined by the points $(s,g(s)), (t,g(t))$ is given by the polynomial
\[f(s,t) = \frac{g(s)-g(t)}{s-t}. \]
It is not hard to verify that this polynomial is not additive or multiplicative (except when $g$ is linear or quadratic), so Theorem \ref{thm:ERRSS} gives the stated bound.
\end{proof}

When $g$ has degree less than three, the number of directions can be linear.
Take for instance the parabola $y=x^2$ and the point set 
$\{(i,i^2): 1\leq i\leq n\}$.
Then the determined directions are $(i^2-j^2)/(i-j)=i+j$, so there are $O(n)$ distinct directions.
It is not hard to see that a similar example can be constructed for any other quadratic $g$.

 \subsection{About the proof of Theorem \ref{thm:ERRSS}}\label{subsec:ERproof}

We now discuss the proof of Theorem \ref{thm:ERRSS}. 
We certainly do not give a full account of the proof in \cite{RSS15a}, but we introduce the setup without too much technical detail.
The proof of the weaker bound in \cite{ER00} used related techniques but was inherently different.
The proof setup in \cite{RSS15a} originated in  \cite{SSS13}, and seems to have been discovered independently (and somewhat later) in \cite[Proposition 8.3]{HH13}; a glimpse of this setup can also be seen in the (earlier) proof of \cite[Theorem 27]{ES12}.
We use the word ``setup'' to refer to the overall counting scheme of \cite{RSS15a}, which is only the surface layer of the proof.
The real achievement in \cite{RSS15a} was the treatment of high-multiplicity curves, which we discuss only briefly at the end of this subsection.

The main tool for obtaining the bound in Theorem \ref{thm:ERRSS} is incidence theory, specifically an incidence bound for points and curves of Pach and Sharir \cite{PS98}\footnote{The proof in \cite{RSS15a} did not directly use the bound from \cite{PS98}, but rather adapted the proof from \cite{PS98} to the incidence situation in \cite{RSS15a}.}. This incidence bound is a generalization of the classical theorem of Szemer\'edi and Trotter \cite{ST} that bounds incidences between points and lines.
The following version is particularly convenient for the applications in this survey. 
It assumes that the point set is a Cartesian product, 
which allows for a significantly simpler proof over $\R$, and makes it easier to prove an analogue over $\C$ (where the corresponding bound has not yet been established without extra assumptions; see \cite{SZ15}).
This theorem was proved by Solymosi and De Zeeuw \cite{SdZ15}, although the real case was probably folklore.
We write $|I(\pts,\cvs)|$ for the set of incidences between the points $\pts$ and the curves $\cvs$, i.e., the set of pairs $(p,C)\in \pts\times \cvs$ such that $p\in C$.
We give a quick sketch of the proof, to show that proving this tool does not require any heavy machinery (at least over $\R$).

\begin{theorem}\label{thm:SZ}
Let $\pts=A\times B$ be a Cartesian product in $\R^2$ or $\C^2$, and let $\cvs$ be a set of algebraic curves of degree at most $d$ in the same plane.
Assume that any two points of $\pts$ are contained in at most $M$ curves of $\cvs$.
Then
\[|I(\pts,\cvs)| = O_{d,M}\left(|\pts|^{2/3}|\cvs|^{2/3} +|\pts|+|\cvs|\right). \]
\end{theorem}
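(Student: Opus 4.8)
The plan is to prove the statement over $\R$ by Székely's crossing-lemma method, which is the route taken by Pach and Sharir \cite{PS98} and uses no tool heavier than Bézout's theorem; it is the passage to $\C$, not the real case, where the hypothesis $\pts=A\times B$ genuinely does work. As a preliminary reduction I would decompose each curve of $\cvs$ into its irreducible components: the number of curves and the bound $M$ each grow by at most a factor $d$, so it suffices to prove the bound assuming the curves in $\cvs$ are distinct and irreducible. I may also assume $|I(\pts,\cvs)|$ is large, since otherwise $|I(\pts,\cvs)| = O_{d,M}\bigl(|\pts|+|\cvs|\bigr)$ and we are done.

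Next I would form the Székely multigraph $G$ whose vertices are the points of $\pts$ that lie on at least one curve. A real algebraic curve of degree $d$ has $O_d(1)$ connected components (Harnack's theorem) and $O_d(1)$ singular points (by Bézout), so deleting the singular points cuts its real locus into $O_d(1)$ open arcs. For each $C\in\cvs$ I order the points of $C\cap\pts$ along each of these arcs and join consecutive points by an edge drawn along $C$; thus a curve meeting $\pts$ in $k$ points contributes at least $k-O_d(1)$ edges, and $e(G)\ge |I(\pts,\cvs)|-O_d(|\cvs|)\ge \tfrac12|I(\pts,\cvs)|$ once $|I(\pts,\cvs)|$ is large. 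Two features of this plane drawing are what we need. First, the multiplicity of any edge is $O_{d,M}(1)$: an edge $pq$ only arises from a curve through both $p$ and $q$, there are at most $O_{d,M}(1)$ such curves, and each yields at most two parallel edges. Second, two edges coming from different curves $C,C'$ cross in at most $d^2$ points, because $C$ and $C'$ are distinct irreducible curves and so $|C\cap C'|\le d^2$ by Bézout; hence $\mathrm{cr}(G)=O_d\bigl(|\cvs|^2\bigr)$.

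Now I would apply the multigraph form of the crossing lemma: for suitable constants $c_1,c_2>0$, a multigraph with $n$ vertices, $e$ edges and maximum edge multiplicity $m$ has $\mathrm{cr}(G)\ge c_2 e^3/(mn^2)$ whenever $e\ge c_1 mn$. Take $n=|\pts|$ and $m=O_{d,M}(1)$ the multiplicity bound above. If $e(G)<c_1 m|\pts|$, then $|I(\pts,\cvs)|\le 2e(G)+O_d(|\cvs|)=O_{d,M}(|\pts|+|\cvs|)$. Otherwise
\[
c_2\,\frac{e(G)^3}{m\,|\pts|^2}\;\le\;\mathrm{cr}(G)\;=\;O_d\bigl(|\cvs|^2\bigr),
\]
so $e(G)=O_{d,M}\bigl(|\pts|^{2/3}|\cvs|^{2/3}\bigr)$ and therefore $|I(\pts,\cvs)|=O_{d,M}\bigl(|\pts|^{2/3}|\cvs|^{2/3}+|\pts|+|\cvs|\bigr)$, as claimed.

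The only delicate step is the topological bookkeeping in building $G$ — bounding the connected components and singular points of a degree-$d$ plane curve, and checking that the chosen arcs with the points ordered along them really define a plane multigraph whose crossings are controlled purely by Bézout. This is routine, but it is where all the geometry lives; everything past it is the standard Szemerédi--Trotter deduction \cite{ST}. Over $\C$ the argument collapses, since a graph drawn along curves in $\C^2\cong\R^4$ need not be planar, and this is precisely where the Cartesian product structure $\pts=A\times B$ must be exploited in an essential rather than cosmetic way; for the additional argument handling the complex case I would follow Solymosi and De Zeeuw \cite{SdZ15}.
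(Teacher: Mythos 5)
Your argument is correct in outline and would yield the stated bound over $\R$, but it is a genuinely different proof from the one sketched in the survey. You run the Sz\'ekely crossing-number machine: decompose into irreducible components, delete singular points, draw the multigraph of consecutive incidences along arcs, bound the edge multiplicity by $O_{d,M}(1)$ and the crossings by B\'ezout, and invoke the multigraph crossing lemma. This is exactly the Pach--Sharir route \cite{PS98}, and it in fact proves the stronger statement in which $\pts$ is an arbitrary point set --- the hypothesis $\pts=A\times B$ plays no role in your real argument. The price is all the topological bookkeeping you flag at the end (Harnack, singular points, branches at intersection points, the multiplicity of parallel edges on oval components), plus the component-decomposition step, where you should note that distinct curves of $\cvs$ may share a component; such a shared component either meets $\pts$ in at most one point or is shared by at most $M$ curves, so deduplication only costs a factor absorbed into $O_{d,M}$. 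The survey's proof instead exploits the product structure directly: partition $\R^2$ by $O(r)$ horizontal and vertical lines so that each of the $O(r^2)$ rectangles contains $O(|\pts|/r^2)$ points, observe that a degree-$d$ curve meets only $O_d(r)$ rectangles, split incidences according to whether a curve has one or several points in a cell, and use the two-points-on-at-most-$M$-curves hypothesis to count the latter; optimizing $r$ gives the bound with no crossing lemma and no topology of real curves. That proof is shorter, is the reason the Cartesian hypothesis appears in the statement, and --- the point the survey is making --- is the argument whose structure survives the passage to $\C^2$, where your planar drawing collapses and you (like the survey) must defer entirely to \cite{SdZ15}. Both routes are valid over $\R$; yours buys generality in the point set at the cost of machinery, the paper's buys simplicity and a template for the complex case.
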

\begin{proof}[Proof sketch in $\R^2$.]
Let us assume that $|A|=|B|=n$ and $\cvs = n^2$ (which roughly holds in most of the statements in this survey).
We can partition $\R^2$ using $O(r)$ horizontal and vertical lines, in such a way that each of the $O(r^2)$ resulting rectangles contains roughly $O(n^2/r^2)$ points of $A\times B$.
Moreover, we can ensure that the partitioning lines do not contain any points of $A\times B$, and are not contained in any of the curves in $\cvs$.
We observe that an algebraic curve of degree at most $d$ intersects $O_d(r)$ rectangles, since it can intersect a partitioning line in at most $d$ points.

We split the incidences as follows: $I_1$ is the set of incidences $(p,C)$ such that $p$ is the only incidence on $C$ in the rectangle that $p$ lies in, and $I_2$ is the remaining set of incidences.
Since each curve hits $O_d(r)$ rectangles, we have $|I_1| = O_d(rn^2)$.
On the other hand, if a curve has an incidence from $I_2$ in a certain rectangle,
then it has at least one additional incidence in that same rectangle.
By the assumption of the theorem, any two points are together contained in at most $M$ curves. 
Thus the $O(n^2/r^2)$ points in a rectangle are involved in at most $O_M(n^4/r^4)$ incidences from $I_2$, which altogether gives $|I_2| = O_{M}(n^4/r^2)$.
Choosing $r =n^{2/3} $ optimizes 
\[|I(\pts,\cvs)| = |I_1|+|I_2| = O_{d,M}\left(rn^2 + n^4/r^2\right) 
= O\left(n^{8/3}\right),\]
which is the stated bound when $|\pts| = |\cvs| = n^2$.
\end{proof}

The proof of Theorem \ref{thm:ERRSS} is based on an upper bound for the size of the following set of quadruples:
\[Q=\{(a,b,a',b')\in A\times B\times A\times B: f(a,b) = f(a',b')\}.\]
Given such an upper bound, the Cauchy-Schwarz inequality gives a lower bound on the size of the image set $f(A\times B)$, using the following calculation:
\begin{align}\label{eq:quads}
|Q| = \sum_{c\in f(A\times B)}|f^{-1}(c)|^2
\geq \frac{1}{|f(A\times B)|} \left(\sum_{c\in f(A\times B)} |f^{-1}(c)|\right)^2
= \frac{n^4}{|f(A\times B)|}.
\end{align}
Specifically, when $f$ is not additive or multiplicative we obtain the upper bound $|Q|=O_d(n^{8/3})$, and then \eqref{eq:quads} implies $|f(A\times B)| =\Omega_d(n^{4/3})$.
A similar application of Cauchy-Schwarz played a central role in \cite{GK15}.

To obtain an upper bound on $|Q|$, we define a set of curves and a set of points based on the given polynomial $f$ and the given sets $A,B$, and then we apply Theorem \ref{thm:SZ}.
For each $(a,a')\in A\times A$, define
\[C_{aa'} = \{(x,y)\in \R^2: f(a,x) = f(a',y)\}. \]
This is an algebraic curve of degree at most $d$ (the degree of $f$).
Note that for $(b,b')\in B\times B$, we have $(b,b')\in C_{aa'}$ if and only if $(a,b,a',b')\in Q$.
Thus, if we set 
\[\pts = B\times B~~~\text{and}~~~\cvs =\{C_{aa'}: (a,a')\in A\times A\},\]
then $|I(\pts,\cvs)| = |Q|$.

If we could apply Theorem \ref{thm:SZ} to $\pts$ and $\cvs$, then we would immediately get the desired bound $|Q| = O_d(n^{8/3})$.
However, $\pts$ and $\cvs$ need not satisfy the degrees-of-freedom condition of Theorem \ref{thm:SZ} that two points are contained in a bounded number of curves.
This is to be expected, since the bound should fail when $f$ is additive or multiplicative.
In fact, even when $f$ is not additive or multiplicative, the degrees-of-freedom condition may be violated.
However, it was shown in \cite{RSS15a} that when $f$ is not additive or multiplicative, the condition is only violated in a weak sense.
Specifically, one can remove negligible subsets of the points and curves so that the remainder does satisfy the condition.

A key insight in the proof is that the curves $C_{aa'}$ satisfy a kind of \emph{duality}.
Indeed, we can define ``dual curves'' of the form $C^*_{bb'} = \{(s,t)\in \R^2: f(s,b) = f(t,b')\}$, so that the point $(a,a')$ lies on the dual curve $C^*_{bb'}$ if and only if the point $(b,b')$ lies on the curve $C_{aa'}$.
Thus, to check the degrees-of-freedom condition of Theorem \ref{thm:SZ} that two points $(b_1,b_1'),(b_2,b_2')$ lie on a bounded number of curves $C_{aa'}$, we can instead look at the number of points $(a,a')$ in the intersection of the curves $C^*_{b_1b_1'},C^*_{b_2b_2'}$.
Such an intersection is easily bounded by B\'ezout's inequality, unless the curves $C^*_{b_1b_1'}$ and $C^*_{b_2b_2'}$ have a common component.
Thus the degrees-of-freedom condition comes down to showing that when many of the curves $C^*_{bb'}$ have many common components, with high multiplicity, then $f$ must be additive or multiplicative.
By symmetry, we may as well consider this question for the original curves $C_{aa'}$.

Let us see what happens when $f$ is additive or multiplicative.
First consider the case where $f(x,y) = h(x) + k(y)$.
Then $C_{aa'}$ is defined by $k(x) - k(y) = h(a') - h(a)$.
Thus $C_{a_1a_1'}$ and $C_{a_2a_2'}$ are the same curve whenever $h(a_1') -h(a_1) = h(a_2') - h(a_2)$; this means that as many as $\Theta(|A|)$ pairs $(a,a')$ may define the same curve $C_{aa'}$.
Similarly, when $f(x,y) = h(x)k(y)$, then $C_{aa'}$ is defined by $k(x) = k(y)\cdot (h(a')/h(a))$, and again we can have high multiplicity.
Finally, when $f(x,y) = g(h(x)+k(y))$, then $f(a,x) - f(a',y)$ has the factor  $h(a) + k(x) -h(a')-h(y) $, which corresponds to a component that can have high multiplicity
(and the same happens for $f(x,y) = g(h(x)\cdot k(y))$).

The key challenge in the proof of Theorem \ref{thm:ERRSS} is to obtain the converse, i.e., to show that when many curves have high multiplicity, then there must be polynomials $g,h,k$ that explain the multiplicity in one of the ways above.
In \cite{RSS15a} this is done by algebraically prying out $g,h,k$ from specific coefficients of the polynomial $f$.
For instance, roughly speaking, when the curves $C_{aa'}$ have many common components and the coefficient of the leading term of $f(a,x)-f(a',y)$ is not constant as a polynomial in $a$, then this polynomial turns out to be the $h$ in the multiplicative form $f(x,y) = g(h(x)k(y))$. 
On the other hand, if only the constant term of $f(a,x)-f(a',y)$ depends on $a$ and $a'$, then this leads to the polynomial $h$ in the additive form $f(x,y) = g(h(x)+k(y))$.

\section{The Elekes-Szab\'o problem}
\label{sec:ES}

\subsection{Intersecting varieties with Cartesian products}
To introduce a generalization of the Elekes-R\'onyai problem due to Elekes and Szab\'o, we take a step back and approach from a different direction; after a while we will see what the connection between the problems is.
In this section we work primarily over $\C$, which is the most natural setting for the Elekes-Szab\'o problem and the relevant proofs.

Let us consider the Schwartz-Zippel lemma (see \cite{L09} for the curious history of this lemma).
The simplest non-trivial case is the following bound on the intersection of a curve with a Cartesian product\footnote{The word ``grid'' is often used in this context, but may lead to confusion with integer grids.}. 
If $F\in \C[x,y]$ is a polynomial of degree $d$ and $A,B\subset \C$ are finite sets of size $n$,
then\footnote{We write $Z(F)$ for the \emph{zero set} of a polynomial $F$, i.e., the set of points at which $F$ vanishes.}
\begin{align}\label{eq:SZC2}
|Z(F)\cap (A\times B)| = O_d\left(n\right). 
\end{align}
This statement is ``tight'' in the sense that, for any fixed polynomial, there are sets $A,B$ for which the bound is best possible.
Indeed, we can arbitrarily choose $n$ points on $Z(F)$, let $A$ be the projection of this set to the $x$-axis, and let $B$ be the projection to the $y$-axis; then $A\times B$ shares at least $n$ points with $Z(F)$.

Now let us consider the tightness for the next case of the Schwartz-Zippel lemma (see Subsection \ref{subsec:longer} for the general statement), which says that for $F\in \C[x,y,z]$ and $A,B,C\subset \C$ of size $n$ we have
\begin{align}\label{eq:SZ3D}
|Z(F)\cap (A\times B\times C)| = O_d\left(n^2\right).
\end{align}
It is not so clear if this bound is tight, 
since the trick used above to show that \eqref{eq:SZC2} is tight does not work here. 
The best we could try is to choose $A, B$ of size $n$, take $n^2$ points on $Z(F)$ above $A\times B$, and then project to the $z$-axis to get $C$; but the resulting $C$ is likely to have many more than $n$ points.

Nevertheless, for certain special polynomials the bound in \eqref{eq:SZ3D} is tight.
Take for instance $F = x+y-z$ and $A=B=C=\{1,\ldots,n\}$;
then $|Z(F)\cap (A\times B\times C)| = \Theta(n^2)$. 
Of course, one can construct similar examples for any polynomial of the form $F=f(g(x)+h(y)+k(z))$ with $f,g,h,k$ univariate polynomials.
In analogy with Theorem \ref{thm:ERRSS}, one might guess that these are the only special polynomials, but this is not quite true.
It turns out that the right class of special polynomials consists of those of the form $F=f(g(x)+h(y)+k(z))$ with $f,g,h,k$ \emph{analytic} functions that are defined in a \emph{local} way.

\begin{theorem}[Elekes-Szab\'o, Raz-Sharir-De Zeeuw]\label{thm:ESRSZ}
Let $F\in \C[x,y,z]$ be an irreducible polynomial of degree $d$ with 
with each of $F_x,F_y, F_z$ not identically zero. Then one of the following holds.\\
$(i)$ For all $A,B,C\subset \C$ with $|A|=|B|=|C|=n$ we have
$$|Z(F) \cap (A\times B\times C)|=O_d(n^{11/6}).$$
$(ii)$ There exists a one-dimensional subvariety $Z_0\subset Z(F)$, such that every $v\in Z(F)\backslash Z_0$ has an open neighborhood $D_1\times D_2\times D_3$ and analytic functions 
$\varphi_i: D_i\to \C$,
such that for every $(x,y,z)\in D_1\times D_2\times D_3$ we have
$$(x,y,z)\in Z(F)~~~\text{if and only if}~~~\varphi_1(x)+\varphi_2(y)+\varphi_3(z) = 0.$$
\end{theorem}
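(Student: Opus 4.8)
The plan is to follow the same duality-and-incidences scheme that proves Theorem~\ref{thm:ERRSS}, now adapted to the three-variable setting. First I would set up the quadruple-counting machinery: given $A,B,C$, I want to bound the number of collinear-type quadruples, but the cleanest route is to fix the $z$-coordinate and reduce to a two-variable incidence problem. Concretely, using that $F_z \not\equiv 0$, the equation $F(x,y,z)=0$ locally defines $z$ as a function of $(x,y)$, so for each pair $(c,c')\in C\times C$ one gets a curve $C_{cc'}=\{(x,y)\in\C^2: \exists\, y' \text{ with } F(x,y,c)=0=F(x,y',c')\}$ — or, more robustly, one works directly with the surface $Z(F)$ and counts incidences between the point set $A\times B$ (sitting in the $z=c$ slice) and an appropriate family of curves indexed by $C$. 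Applying Theorem~\ref{thm:SZ} over $\C$ (which is available precisely because the point set is a Cartesian product) then yields the bound $|Z(F)\cap(A\times B\times C)| = O_d(n^{11/6})$, \emph{provided} the relevant degrees-of-freedom condition holds: any two points of $A\times B$ lie on only $O_d(1)$ of the curves in the family.

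The heart of the proof, exactly as in the Elekes--R\'onyai case, is the dichotomy: either that bounded-multiplicity condition holds (giving case $(i)$), or it fails badly, and one must extract analytic functions $\varphi_1,\varphi_2,\varphi_3$ from the failure (giving case $(ii)$). I would exploit the duality between the curves $C_{cc'}$ and dual curves indexed by pairs from $B$ (or $A$): two points lie on many curves $C_{cc'}$ iff many points $(c,c')$ lie on a common component of two dual curves, which by B\'ezout forces those dual curves to share a component. So the combinatorial hypothesis of case $(i)$ failing translates into: a positive-dimensional family of the curves $C_{cc'}$ share high-multiplicity components. The task is then purely algebraic-geometric: show that such a coincidence of components is only possible when $Z(F)$ is, away from a one-dimensional exceptional set $Z_0$, the graph of a relation $\varphi_1(x)+\varphi_2(y)+\varphi_3(z)=0$ for local analytic $\varphi_i$.

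I expect this last extraction step to be the main obstacle, and it is genuinely harder than in the two-variable case for two reasons. First, one cannot in general hope for \emph{polynomial} $g,h,k$ as in Theorem~\ref{thm:ERRSS}; the correct statement is local and analytic, so the argument must be done in the complex-analytic category — parametrize a branch of $Z(F)$ near a generic smooth point, use the implicit function theorem to get local coordinates, and then show that the functional equation relating the branches over different slices forces a separation of variables $\varphi_1(x)+\varphi_2(y)+\varphi_3(z)$. The classical tool here is to differentiate the relation $F=0$ and show that a suitable ratio of partial derivatives, $F_x/F_z$ say, depends only on $x$ (and similarly for the others) — a condition of the form $\partial_y(F_x/F_z)=0$ on $Z(F)$ — which is exactly equivalent to the additive local form; this is where the high-multiplicity-of-components hypothesis gets converted into a differential identity. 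Second, one must carefully manage the exceptional locus: the analytic functions need not extend globally, components of $Z(F)$ where $F_x$ or $F_y$ or $F_z$ vanishes must be thrown into $Z_0$, and one has to check $Z_0$ is at most one-dimensional. Handling these degenerations cleanly, and making the jump from ``many curves share components'' to the global differential identity on the irreducible surface $Z(F)$, is the delicate part; the incidence-geometric half of the argument is, by contrast, a routine adaptation of the proof sketch of Theorem~\ref{thm:SZ}.
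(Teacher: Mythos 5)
Your overall strategy is the right one --- it is essentially the Raz--Sharir--de Zeeuw scheme: reduce to a quadruple count, bound it by an incidence bound for Cartesian products (Theorem \ref{thm:SZ}), use duality and B\'ezout to reduce the degrees-of-freedom condition to a statement about shared components, and convert high multiplicity into the local analytic form. The second half of your sketch (duality, the extraction of the $\varphi_i$ via a separation-of-variables/derivative criterion, the need to work in the analytic category, the management of the exceptional locus $Z_0$) tracks the actual proof well.

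There is, however, a genuine gap in the reduction to the incidence problem: as written, your first paragraph recovers only the trivial Schwartz--Zippel bound $O_d(n^2)$, not $O_d(n^{11/6})$. Applying Theorem \ref{thm:SZ} to the $n^2$ points of $A\times B$ and a family of curves indexed by $C$ (i.e.\ only $n$ curves) gives $O\left((n^2)^{2/3}n^{2/3}+n^2+n\right)=O(n^2)$, so nothing is gained. The exponent $11/6$ arises only after an intermediate Cauchy--Schwarz step over the shared variable, which your sketch omits. Concretely, one writes $|Z(F)\cap(A\times B\times C)|=\sum_{c\in C}|\{(a,b):F(a,b,c)=0\}|\le O_d(|C|^{1/2}|Q|^{1/2})$ with $Q=\{(a,b,a',b'): \exists\, c \text{ such that } F(a,b,c)=F(a',b',c)=0\}$, and then bounds $|Q|=O_d(n^{8/3})$ as the number of incidences between the $n^2$ points of $B\times B$ and the $n^2$ curves $C_{aa'}=\{(y,y'):\exists\, z,\ F(a,y,z)=F(a',y',z)=0\}$; this yields $n^{1/2}\cdot n^{4/3}=n^{11/6}$. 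Relatedly, your curve $C_{cc'}=\{(x,y):\exists\, y' \text{ with } F(x,y,c)=0=F(x,y',c')\}$ is not the right object: the existential quantifier must bind the variable shared between the two copies of $F$ (the one summed over in Cauchy--Schwarz), whereas in your definition the condition ``$\exists\, y'$ with $F(x,y',c')=0$'' holds for all but finitely many $x$, so the ``curve'' degenerates to the single slice $F(x,y,c)=0$ and the pairing $(c,c')$ carries no information. Once the curves are set up correctly they are constructible sets rather than honest varieties (quantifier elimination), a further technical point the proof must absorb; but the essential missing idea is the Cauchy--Schwarz/fiber-product step that produces the quadruple set on which the incidence bound is actually applied.
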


Qualitatively, this result was proved by Elekes and Szab\'o \cite{ES12}\footnote{The publication year of \cite{ES12} is 2012, but an essentially complete version of the paper existed much earlier; Elekes \cite{E02} referred to the result in 2002.}, 
who proved that $|Z(F)\cap(A\times B\times C)| = O(n^{2-\eta_d})$ for a constant $\eta_d$ depending only on the degree $d$ of $F$,\footnote{Earlier versions of \cite{ES12} claimed a bound of the form $O(n^{2-\eta})$ for an absolute $\eta>0$, and this was restated in \cite{E02}. But the published version states the theorem with $\eta_d$ depending on $d$.} 
unless $F$ has the special form described in Theorem \ref{thm:ESRSZ}$(ii)$ below.
Using the new proof setup in \cite{SSS13,RSS15a}, 
this bound was improved to $O(n^{11/6})$ by Raz, Sharir, and De Zeeuw \cite{RSZ15}.
An exposition of the algebraic geometry underlying the proof in \cite{ES12} was written by Wang \cite{W15}.

The statement also holds if $\C$ is replaced by $\R$ (and the analytic $\varphi_i:D\to \C$ are replaced by real-analytic $\varphi_i:D\to \R$).
We can also allow $A,B,C$ to have different sizes.
This does not affect the description in condition $(ii)$, and the bound in condition $(i)$ becomes
\begin{align}\label{eq:ESunbalanced}
|Z(F) \cap (A\times B\times C)|=O_d\left( |A|^{2/3}|B|^{2/3}|C|^{1/2}+|A||C|^{1/2}+|B||C|^{1/2}+|C|\right);
\end{align}
of course the same bound holds for any permutation of $A,B,C$.
We can again conjecture that the bound can be significantly improved, perhaps as far as $O_\eps(n^{1+\eps})$ (in the balanced case), or even $O(n\sqrt{\log n})$.
No better lower bound is known than $\Omega(n\sqrt{\log n})$, for instance for $F =  x^2+xy+y^2 - z$ (which comes directly from the polynomial $f = x^2+xy+y^2$ that provides the best known upper bound for Theorem \ref{thm:ERRSS}, as mentioned in Subsection \ref{subsec:ERextensions}).

It is not likely that condition $(ii)$ can be replaced by a purely polynomial condition, i.e., without mentioning analytic functions.
This can be seen from the fact that the group law on an elliptic curve gives constructions for which the bound in $(i)$ does not hold, while on the other hand, it is well-known that parametrizing elliptic curves as in $(ii)$ requires analytic functions.
We will give more details on the connection with elliptic curves at the end of Subsection \ref{subsec:ESoncurves}.

In \cite{ES12}, condition $(ii)$ of Theorem \ref{thm:ESRSZ} is formulated in a somewhat stronger ``global'' form, although for all applications in this survey, the formulation in Theorem \ref{thm:ESRSZ} seems to be more convenient.
Specifically, the local functions $\varphi_i$ can be replaced by \emph{analytic multi-functions} from $\C$ to a \emph{one-dimensional connected algebraic group} $\mathcal{G}$, so that $Z(F)$ is the image of the variety $\{(x,y,z)\in \mathcal{G}^3: x\oplus y\oplus z = e\}$; we refer to \cite{ES12, W15} for definitions.

\subsection{A derivative test for special \texorpdfstring{$F$}{F}}

In applications, it may not be easy to determine whether a given polynomial $F$ satisfies condition $(ii)$.
For relatively simple polynomials, we have the following derivative condition.
It is mentioned in \cite[Subsection 1.1]{ER00} and stated in \cite[Lemma 33]{ES12}; in \cite{ER00} the sufficiency of the condition is ascribed to Jarai, although no proof is provided in \cite{ER00} or \cite{ES12}.
We give a short sketch of the proof; a detailed proof can be found in \cite{RS15a}.

\begin{lemma}\label{lem:derivative}
Let $f:\R^2\to\R$ be a twice-differentiable function with $f_y\not\equiv 0$.
There exist differentiable functions $\psi,\varphi_1,\varphi_2:\R\to\R$ such that 
\begin{equation}\label{eq:testform}
f(x,y)= \psi(\varphi_1(x)+\varphi_2(y))
\end{equation}
if and only if
\begin{equation}
\frac{\partial^2(\log|f_x/f_y|)}{\partial x\partial y}\equiv 0.
\end{equation}
The same holds for analytic $f:\C^2\to \C$ with $\psi,\varphi_1,\varphi_2:\C\to \C$ analytic.
\end{lemma}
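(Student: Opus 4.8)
The plan is to prove the equivalence by a direct calculation in the ``only if'' direction and by solving a simple PDE in the ``if'' direction.

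\textbf{The easy direction.} Suppose $f(x,y) = \psi(\varphi_1(x)+\varphi_2(y))$. Writing $u = \varphi_1(x)+\varphi_2(y)$, the chain rule gives $f_x = \psi'(u)\varphi_1'(x)$ and $f_y = \psi'(u)\varphi_2'(y)$, so that
\[
\frac{f_x}{f_y} = \frac{\varphi_1'(x)}{\varphi_2'(y)},
\]
provided $\psi'(u)\neq 0$ and $\varphi_2'(y)\neq 0$ (which holds on a dense open set, since $f_y\not\equiv 0$). Taking $\log|\cdot|$ turns this into a sum of a function of $x$ alone and a function of $y$ alone, namely $\log|\varphi_1'(x)| - \log|\varphi_2'(y)|$, and any such separated sum is annihilated by $\partial^2/\partial x\partial y$. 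Hence the mixed derivative vanishes identically (wherever it is defined, and then everywhere by continuity).

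\textbf{The hard direction.} Conversely, suppose $\partial_x\partial_y \log|f_x/f_y| \equiv 0$. Integrating this PDE, $\log|f_x/f_y| = P(x) + Q(y)$ for some functions $P, Q$, hence $f_x/f_y = e^{P(x)}\cdot \pm e^{Q(y)} = a(x)/b(y)$ for nowhere-zero functions $a, b$ (the sign can be absorbed and is locally constant). Thus $b(y)f_x = a(x)f_y$, i.e. $f$ is constant along the vector field $b(y)\partial_x - a(x)\partial_y$; equivalently, introducing antiderivatives $\varphi_1$ with $\varphi_1' = a$ and $\varphi_2$ with $\varphi_2' = b$, the level sets of $f$ coincide with the level sets of $\varphi_1(x)+\varphi_2(y)$. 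Since $f_y \not\equiv 0$, the level curves of $\varphi_1(x)+\varphi_2(y)$ are locally graphs and $f$ is locally constant on each, so $f$ factors as $f(x,y) = \psi(\varphi_1(x)+\varphi_2(y))$ for a suitable $\psi$, which inherits differentiability (resp. analyticity) from $f$ on the relevant range. The main obstacle here is purely a matter of regularity and domain: the argument is cleanest locally near a point where $f_x, f_y \neq 0$, and one must be careful about where $a, b$ vanish and whether the construction globalizes (over $\R$ one may need to restrict to a suitable domain, which is why the statement is phrased with functions on all of $\R$ but the honest content is local). In the analytic case over $\C$ the same computation goes through verbatim, replacing $\log|\cdot|$ by a local branch of $\log$ and using that the mixed Wirtinger derivative of a holomorphic function vanishing gives the same separation of variables; this is exactly the setting in which the lemma is applied to Theorem \ref{thm:ESRSZ} (with $F(x,y,z)$ of the form $z - f(x,y)$, reducing condition $(ii)$ to \eqref{eq:testform}).

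I would present the ``only if'' direction in full since it is a two-line chain-rule computation, and only sketch the ``if'' direction, citing \cite{RS15a} for the careful treatment of the regularity and the passage from the local factorization to the stated form; the survey explicitly says a detailed proof is there, so a sketch is appropriate here.
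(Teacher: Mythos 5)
Your proposal is correct and follows essentially the same route as the paper's own sketch: the forward direction is the identical chain-rule computation showing $\log|f_x/f_y|$ separates as a function of $x$ plus a function of $y$, and your converse (integrating to get $f_x/f_y=\varphi_1'(x)/\varphi_2'(y)$ and concluding that $f$ is constant on the level curves of $\varphi_1(x)+\varphi_2(y)$) is just a geometric rephrasing of the paper's change of variables $u=\varphi_1(x)+\varphi_2(y)$, $v=\varphi_1(x)-\varphi_2(y)$ and the computation $f_v\equiv 0$. Deferring the regularity and local-versus-global issues to \cite{RS15a} matches what the survey itself does.
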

\begin{proof}[Proof sketch.]
If $f(x,y)= \psi(\varphi_1(x)+\varphi_2(y))$, then $f_x/f_y = \varphi_1'(x)/\varphi_2'(y)$, 
which gives $\log|f_x/f_y| = \log|\varphi_1'(x)| - \log|\varphi_2'(y)|$.
Differentiating with respect to $x$ and $y$ gives $0$.

Conversely, if $\partial^2(\log|f_x/f_y|)/\partial x\partial y\equiv 0$,
integrating gives $\log|f_x/f_y| = g_1(x) - g_2(y)$.
Then, setting $\varphi_1(x) = \int e^{g_1(x)}dx$ and 
$\varphi_2(y) = \int e^{g_2(y)}dy$,
we have $f_x/f_y = \varphi_1'(x)/\varphi_2'(y)$.
We express $f$ in terms of the new variables $u = \varphi_1(x)+\varphi_2(y)$ and $v = \varphi_1(x) - \varphi_2(y)$,
so that the chain rule gives $f_x = \varphi_1'(x)(f_u+f_v)$ and $f_y = \varphi_2'(y)(f_u-f_v)$.
Combining these equations gives $0 = \frac{f_x}{\varphi_1'(x)} - \frac{f_y}{\varphi_2'(y)} = 2f_v$.
Thus $f$ depends only on the variable $u$, which means that we can write it as $f(x,y) = \psi(\varphi_1(x)+\varphi_2(y))$.
\end{proof}

To apply this lemma to a polynomial $F(x,y,z)$, we need to locally write the implicit surface $Z(F)$ as an explicit surface $z = f(x,y)$, for an analytic function $f$. 
Then the expression $\varphi_1(x)+\varphi_2(y) + \varphi_3(z)=0$ in condition $(ii)$ of Theorem \ref{thm:ESRSZ} is equivalent to $f(x,y)= \varphi_3^{-1}(\varphi_1(x)+\varphi_2(y))$.
In theory, such an $f$ exists by the implicit function theorem, but in practice we can only calculate $f$ when $F$ has low degree (in one of the variables).

\subsection{Applications}

\paragraph{Expanding polynomials.}
Given $f\in \C[x,y]$, we can set $F(x,y,z) = f(x,y) - z$ and apply Theorem \ref{thm:ESRSZ} with $|A|=|B|=n$ and $C = f(A\times B)$.
If condition $(i)$ applies, we get from the unbalanced bound \eqref{eq:ESunbalanced} that
\[n^2=|Z(f(x,y)-z)\cap (A\times B\times C)|=O_d(n^{4/3}|C|^{1/2}),\]
so $|f(A\times B)| = |C| = \Omega(n^{4/3})$.
Otherwise, condition $(ii)$ tells us that locally we have
\begin{align}\label{eq:analyticspecial}
f(x,y) = \psi(\varphi_1(x)+\varphi_2(y)),
\end{align}
with $\psi, \varphi_1,\varphi_2$ analytic functions.

Note that the multiplicative form of $f$ also falls under \eqref{eq:analyticspecial}, since we can (locally) write 
\[g(h(x)\cdot k(y)) = (g\circ \log^{-1})(\log |h(x)| + \log |k(y)| )\]
with all functions analytic.
We thus have almost deduced Theorem \ref{thm:ERRSS}, except that the special form of $f$ is local, and we do not know that $\varphi_1,\varphi_2,\psi$ are polynomials.
It would be interesting to find a way to deduce the full Theorem \ref{thm:ERRSS} from this local analytic form.
Something close to that is done by Tao in \cite[Theorem 41]{T15} using arguments from complex analysis.

\paragraph{Distances from three points.}
The following result follows from Theorem \ref{thm:ESRSZ}.

\begin{theorem}[Elekes-Szab\'o, Sharir-Solymosi]\label{thm:ESSS}
Given three non-collinear points $p_1,p_2,p_3$ and a point set $P$ in $\R^2$,
there are $\Omega(|P|^{6/11})$ distinct distances from $p_1,p_2,p_3$ to $P$.
\end{theorem}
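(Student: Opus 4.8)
The plan is to attach an algebraic surface in $\C^3$ to the configuration and feed it into Theorem~\ref{thm:ESRSZ}. Choose coordinates so that $p_1=(0,0)$, $p_2=(1,0)$, and $p_3=(s,t)$; non-collinearity means $t\neq 0$. For a point $q=(x,y)$ set $a=|q-p_1|^2$, $b=|q-p_2|^2$, $c=|q-p_3|^2$. Subtracting $a$ from $b$ and from $c$ cancels the quadratic part, so $x$ and $y$ are affine-linear functions of $a,b,c$ (here $x=(a-b+1)/2$ and $y$ is affine in $a,b,c$); substituting back into $a=x^2+y^2$ produces a polynomial relation $F(a,b,c)=0$ of degree at most $2$. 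First I would pass to an irreducible factor of $F$ vanishing on the Zariski closure of the image of $q\mapsto(a,b,c)$ and observe that it depends genuinely on each variable — equivalently that $Z(F)$ is not a cylinder in any coordinate direction, which holds because generically fixing two of $a,b,c$ leaves only finitely many values for the third. Thus $F_a,F_b,F_c\not\equiv 0$, as required by Theorem~\ref{thm:ESRSZ}. I would also note that $q\mapsto(a,b,c)$ is at most $2$-to-$1$, since $q$ lies on the intersection of two circles centered at $p_1$ and $p_2$.

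Next, let $m=|D(\{p_1,p_2,p_3\},P)|$ be the quantity to be bounded below, and let $A,B,C\subset\C$ be the sets of squared distances from $p_1,p_2,p_3$ to $P$; each has size at most $m$. By construction the image of $P$ under $q\mapsto(a,b,c)$ lies in $Z(F)\cap(A\times B\times C)$, so $|P|\le 2\,|Z(F)\cap(A\times B\times C)|$. Applying Theorem~\ref{thm:ESRSZ} to $F$ — after padding $A,B,C$ to a common size $m$, which only enlarges the intersection, or equivalently via the unbalanced bound~\eqref{eq:ESunbalanced} — case $(i)$ gives $|Z(F)\cap(A\times B\times C)|=O(m^{11/6})$, hence $|P|=O(m^{11/6})$ and $m=\Omega(|P|^{6/11})$. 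So the entire argument reduces to ruling out case $(ii)$ of Theorem~\ref{thm:ESRSZ} for $F$.

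To exclude case $(ii)$ I would invoke the derivative test of Lemma~\ref{lem:derivative}. Near a generic point of $Z(F)$, where $F_c\neq 0$, one solves $F(a,b,c)=0$ for $c$ explicitly: with the coordinates above one has $y=\pm\sqrt{a-\tfrac14(a-b+1)^2}$, and then $c=g(a,b)$ is (an affine-linear function of $a,b$) plus $\mp 2t\sqrt{a-\tfrac14(a-b+1)^2}$. On this local branch, condition $(ii)$ for $F$ is equivalent to $g$ having the form $\psi(\varphi_1(a)+\varphi_2(b))$ with $\varphi_3=-\psi^{-1}$, which by Lemma~\ref{lem:derivative} holds if and only if $\partial^2\big(\log|g_a/g_b|\big)/\partial a\,\partial b\equiv 0$. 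I would then compute this mixed partial directly and verify it is not identically zero: the quadratic $a-\tfrac14(a-b+1)^2$ under the square root is not a sum of a function of $a$ and a function of $b$, so the square-root term obstructs the additive structure, and the test fails for every admissible configuration (i.e. every $t\neq 0$), with a handful of degenerate sub-cases such as $s=0$, $s=1$, or $s=\tfrac12$ checked separately and behaving identically as long as $t\neq 0$. If condition $(ii)$ held on $Z(F)\setminus Z_0$, the identity would hold on an open subset of the $(a,b)$-domain and hence everywhere by analytic continuation, contradicting the computation.

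The main obstacle is precisely this last step: doing the bookkeeping that converts condition $(ii)$ of Theorem~\ref{thm:ESRSZ} — a statement about the polynomial $F$ and local analytic parametrizations of $Z(F)$ — into the single scalar identity of Lemma~\ref{lem:derivative}, handling the local-vs-global and analytic-continuation issues, and then checking that this identity genuinely fails \emph{uniformly} over all non-collinear triples $p_1,p_2,p_3$ rather than merely for a generic one. Everything else — the coordinate normalization, the degree and non-degeneracy of $F$, the elementary counting, and the final exponent arithmetic — is routine.
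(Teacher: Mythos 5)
Your proposal is correct and follows essentially the same route as the paper: eliminate the coordinates of $q$ to obtain a quadratic relation $F(a,b,c)=0$ among the three squared distances, apply Theorem~\ref{thm:ESRSZ} to $Z(F)\cap(A\times B\times C)$ to get $|P|=O(m^{11/6})$ in case $(i)$, and rule out case $(ii)$ by solving locally for one variable and invoking the derivative test of Lemma~\ref{lem:derivative}. The only differences are cosmetic (your explicit choice of coordinates, and using that $q\mapsto(a,b,c)$ is at most $2$-to-$1$ where the paper notes the triples are actually distinct), and your honest flagging of the final derivative computation as the remaining work matches the level of detail in the paper's own sketch, which likewise defers that verification.
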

\begin{proof}[Proof sketch]
Let $D$ denote the set of squared distances between $p_1, p_2,p_3$ and the points in $P$.
A point $q\in P$ determines three squared distances to $p_1, p_2,p_3$, given by
$$
a  = (x_q-x_{p_1})^2 + (y_q-y_{p_1})^2,~~
b  = (x_q-x_{p_2})^2 + (y_q-y_{p_2})^2,~~
c  = (x_q-x_{p_3})^2 + (y_q-y_{p_3})^2 .
$$
The variables $x_q$ and $y_q$ can be eliminated from these equations to yield a quadratic equation $F(a,b,c)=0$
with coefficients depending on $p_1, p_2,p_3$ (in \cite{ES12} $F$ can be seen written out).
By construction, for each point $q\in P$, the corresponding squared distances $a,b,c$ belong to $D$. 
The resulting triples $(a,b,c)$ are all distinct, so $F$ vanishes at $|P|$ triples of $D\times D\times D$. 

The polynomial $F(x,y,z)$ turns out to be quadratic in $z$, so we can locally express it as $z = f(x,y)$.
Then we can apply Lemma \ref{lem:derivative} to $f$ to see that, if $p_1,p_2,p_3$ are not collinear, then $f$ does not have the form in \eqref{eq:testform}, 
which implies that $F$ does not satisfy property $(ii)$ of Theorem \ref{thm:ESRSZ}. 
Then property $(i)$ gives $|P| = O(|D|^{11/6})$, or $|D| = \Omega(|P|^{6/11})$.
When $p_1,p_2,p_3$ are collinear, $F$ becomes
a linear polynomial, so it does satisfy property $(ii)$.
\end{proof}

This problem was introduced by Elekes \cite{E95}, who showed that if $p_1,p_2,p_3$ are collinear (and equally spaced), then one can place $P$ so that there are only $O(|P|^{1/2})$ distances from $p_1, p_2,p_3$ to $P$.
Elekes and Szab\'o \cite{ES12} proved Theorem \ref{thm:ESSS} with the weaker bound $\Omega(|P|^{1/2+\eta})$ for some small absolute constant $\eta>0$, as a consequence of their version of Theorem \ref{thm:ESRSZ} (although they formulated the result in terms of triple points of circles; see the next application).
Sharir and Solymosi \cite{SS15} used the setup of \cite{SSS13} and ad hoc arguments to improve this $\eta$ to $1/22$; their work preceded \cite{RSZ15} and was the first extension of \cite{SSS13} that does not follow from \cite{RSS15a}.

Theorem \ref{thm:ESRSS} provides curious new information on Erd\H os's distinct distances problem (see \cite{BMP05}), which asks for the minimum number of distinct distances determined by a point set in $\R^2$.
Erd\H os conjectured that this minimum is $\Theta(n/\sqrt{\log n})$, and this was almost matched by Guth and Katz \cite{GK15}\footnote{The new algebraic methods introduced in \cite{GK15} indirectly led to the improvement of \cite{SSS13} in Theorem \ref{thm:ERSSS}, and thus to many of the recent results in this survey.}, who established $\Omega(n/\log n)$.
Theorem \ref{thm:ESSS} suggests that something stronger is true: There are many distances that occur just from three fixed non-collinear points.
If, as conjectured, the bound in Theorem \ref{thm:ESRSZ} can be improved from $O(n^{11/6})$ to $O(n^{1+\eps})$, or even $O(n\sqrt{\log n})$, 
then Erd\H os's conjectured bound would already hold if one only considers distances from three non-collinear points in the point set (note that if the entire point set is collinear, there are $\Omega(n)$ distances from any given point).

While waiting for improvements in the bound of Theorem \ref{thm:ESSS}, we could instead consider distances from more than three points.
Given $k$ points in a suitable non-degenerate configuration and a point set $P$ in $\R^2$, 
the number of distances from the $k$ points to $P$ should be $\Omega(|P|^{1/2+\alpha_k})$, where we would expect $\alpha_k$ to grow with $k$.
Let us pose the first unknown step as a problem.

\begin{problem}
Let $P\subset \R^2$, and consider four points in $\R^2$ such that no three are collinear (or such that some stronger condition holds).
Then the number of distinct distances from the four points to $P$ is $\Omega(|P|^{1/2 + \alpha})$, with $\alpha > 1/22$.
\end{problem}

\paragraph{Triple points of circle families.}
Elekes and Szab\'o \cite{ES12} formulated the problem of Theorem \ref{thm:ESSS} in a different way.
They considered three points $p_1,p_2,p_3$ in $\R^2$ and  three families of $n$ concentric circles centered at the three points,
and they looked for an upper bound on the number of \emph{triple points} of these families, i.e., points covered by one circle from each family.
Theorem \ref{thm:ESSS} states that if $p_1,p_2,p_3$ are not collinear, then the number of triple points is $O(n^{11/6})$; a construction in \cite{E95} shows that if $p_1,p_2,p_3$ are collinear, there can be as many as $\Omega(n^2)$ triple points.

One can ask the same question for any three one-dimensional families of circles, or even more general curves.
Such statements were studied by Elekes, Simonovits, and Szab\'o \cite{ESS09},
with a special interest in the case of concurrent unit circles, i.e., unit circles passing through a fixed point.
This case was improved by Raz, Sharir, and Solymosi \cite{RSS15b}, again using the setup of \cite{SSS13}, and ad hoc analytic arguments.
By the arguments from \cite{ESS09}, the improvement also follows from Theorem \ref{thm:ESRSZ}.

\begin{theorem}[Elekes-Simonovits-Szab\'o, Raz-Sharir-Solymosi]
\label{thm:ESRSS}
Three families of $n$ concurrent unit circles (concurrent at three distinct points) determine $O(n^{11/6})$ triple points.
\end{theorem}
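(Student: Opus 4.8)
\emph{Plan.} The plan is to reduce to the Elekes--Szab\'o theorem (Theorem~\ref{thm:ESRSZ}), following the scheme of \cite{ESS09}. Write $q_1,q_2,q_3$ for the three distinct concurrency points, so that the $i$-th family consists of the unit circles whose center $c$ satisfies $|c-q_i|=1$. Parametrize these centers rationally, $c_i=q_i+u(s_i)$ with $u(s)=(1+s^2)^{-1}(1-s^2,\,2s)$, so that each circle of the $i$-th family receives a parameter value; after a rotation of the plane (and treating the finitely many circles hitting the one excluded direction $u=(-1,0)$ with a different chart) the three families are labelled by $n$-element sets $A,B,C\subset\C$. A point $p$ is a triple point exactly when $|p-c_i|=1$ for one center $c_i$ from each family; when the three centers are not collinear this says precisely that $p$ is the circumcenter of the triangle $c_1c_2c_3$ and that its circumradius equals $1$.

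First I would build the surface. Using $R=abc/(4K)$ and $16K^2=2(a^2b^2+b^2c^2+c^2a^2)-(a^4+b^4+c^4)$, where $a^2=|c_2-c_3|^2$, $b^2=|c_3-c_1|^2$, $c^2=|c_1-c_2|^2$, the circumradius condition $R=1$ becomes a polynomial identity in $a^2,b^2,c^2$; substituting $|c_i-c_j|^2=|q_i-q_j+u(s_i)-u(s_j)|^2$ and clearing the denominators $(1+s_i^2)$ yields a nonzero polynomial $F\in\C[x,y,z]$ of degree $O(1)$ with $F(s_1,s_2,s_3)=0$ whenever $(s_1,s_2,s_3)$ produces a triple point. (One could instead eliminate the two coordinates of $p$ directly from the equations $|p-c_i|^2=1$.) Since two distinct unit circles meet in at most two points, every triple point lies on $\gamma_1(s_1)\cap\gamma_2(s_2)\cap\gamma_3(s_3)$ for at least one triple $(s_1,s_2,s_3)\in A\times B\times C$, while any fixed such triple accounts for at most two triple points; hence the number of triple points is $O\bigl(|Z(F)\cap(A\times B\times C)|\bigr)$. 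Applying Theorem~\ref{thm:ESRSZ} to each irreducible component of $Z(F)$ --- the hypothesis $F_x,F_y,F_z\not\equiv 0$ is automatic, since for fixed generic $c_1,c_2$ the circumradius of $c_1c_2c_3$ genuinely varies with $c_3$, and any component of dimension less than two contributes only $O(n)$ by (essentially) \eqref{eq:SZC2} --- case~$(i)$ gives exactly the claimed bound $O(n^{11/6})$.

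It remains to exclude case~$(ii)$: to show that, when $q_1,q_2,q_3$ are distinct, $F$ is \emph{not} of the special analytic form. I expect this to be the main obstacle; it is the technical heart of the theorem. The route is the derivative test of Lemma~\ref{lem:derivative}: near a generic point of $Z(F)$ the implicit function theorem gives an analytic branch $z=f(x,y)$, and condition~$(ii)$ for $F$ would force $f$ to have the form $\psi(\varphi_1(x)+\varphi_2(y))$, hence $\partial^2(\log|f_x/f_y|)/\partial x\,\partial y\equiv 0$. One then computes this mixed derivative by implicit differentiation of $F$ and checks that it does not vanish identically as soon as the three base points are pairwise distinct; equivalently, in the global group-theoretic form of~$(ii)$, specialness would synchronize the three one-parameter circle families through a single one-dimensional algebraic group, which a short geometric argument shows forces two of the $q_i$ to coincide. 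Carrying out this verification is the only genuinely non-routine step; it is done by ad hoc analytic computation in \cite{RSS15b}, and in the more general setting of one-parameter curve families in \cite{ESS09}. (When two base points coincide the two corresponding families are identical, and one easily builds configurations with $\Omega(n^2)$ triple points, so the distinctness hypothesis is essential.)
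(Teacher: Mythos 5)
Your reduction is the one the survey itself has in mind: the paper gives no proof of this theorem, saying only that it follows from Theorem~\ref{thm:ESRSZ} ``by the arguments from \cite{ESS09}'', and your plan --- rational parametrization of the three families of centers, elimination to a bounded-degree $F\in\C[x,y,z]$ vanishing on the parameter triples of concurrent circle triples, the factor-of-two correction because two distinct circles meet in at most two points, then Theorem~\ref{thm:ESRSZ}$(i)$ --- is exactly that argument. However, two steps need more than you give them. The first and most important is the exclusion of case~$(ii)$, which is the entire content of the theorem: you correctly identify Lemma~\ref{lem:derivative} as the tool and correctly predict that the distinctness of $q_1,q_2,q_3$ is what must enter, but ``one then computes this mixed derivative \dots and checks that it does not vanish identically'' is precisely the ad hoc analytic computation that \cite{RSS15b} and \cite{ESS09} had to carry out. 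As written, your proposal asserts the outcome of that computation rather than performing it, so the decisive step is missing.

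The second issue is smaller but genuine: you apply Theorem~\ref{thm:ESRSZ} to each irreducible component of $Z(F)$ and claim the hypothesis $F_x,F_y,F_z\not\equiv 0$ is ``automatic''. Your justification (the circumradius genuinely varies with $c_3$) establishes this for $F$ itself, not for its irreducible factors. A factor $G$ with, say, $G_z\equiv 0$ defines a cylinder $Z(G(x,y))\times\C$, which can meet $A\times B\times C$ in $\Theta(n^2)$ points, so bounding the triple points by $|Z(F)\cap(A\times B\times C)|$ component by component would fail there. The fix is geometric rather than algebraic: on such a component one bounds directly the number of parameter triples that correspond to an actual common point. For a fixed pair of distinct circles from the first two families there are at most two candidate points, and through each of them pass at most two unit circles containing $q_3$, so each of the $O_d(n)$ pairs $(s_1,s_2)\in Z(G)\cap(A\times B)$ contributes $O(1)$ genuine triples (the finitely many pairs yielding coincident circles are handled separately). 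With that patch, and with the non-degeneracy computation actually carried out, the argument is complete.
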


This result shows an interesting distinction between lines and unit circles,
because for three families of concurrent lines it is possible to determine $\Omega(n^2)$ triple points.
We can for instance take the horizontal and vertical lines of an $n\times n$ integer grid, and $n$ lines at a $45^\circ$ angle that cover $\Omega(n^2)$ points of the grid.

It is natural to extend the question to ask for $k$-fold points of $k$ families of concurrent unit circles (or other curves).
This is largely unexplored when $k$ is small. 
For large $k<\sqrt{n}$ and an arbitrary set of $n$ unit circles, it follows from the incidence bound of Pach and Sharir \cite{PS98} (the general case of Theorem \ref{thm:SZ}) that there are at most $O(n^2/k^3)$ points where at least $k$ circles meet.

\subsection{About the proof of Theorem \ref{thm:ESRSZ}}
\label{subsec:ESproof}

Let us briefly discuss the proof of Theorem \ref{thm:ESRSZ}, although again we will not go into too much detail.
The proof is based on that of Theorem \ref{thm:ERRSS}, as described in Subsection \ref{subsec:ERproof},
but now we have to deal with $F(x,y,z)=0$ instead of $f(x,y) = z$.

The first challenge is that we can no longer define the quadruples and curves using the equation $f(a,b) = f(a',b')$.
Instead, we define the quadruples by
\[ Q = \{(a,b,a',b')\in A\times B\times A\times B:
\exists c\in C ~\text{such that}~ F(a,b,c) = F(a',b',c) =0\}.\]
Using the Cauchy-Schwarz inequality we get
\begin{align*}
 |Z(F)\cap (A\times B\times C)| & = \sum_{c\in C}|\{(a,b)\in A\times B : F(a,b,c)=0\}|\\
 & \leq |C|^{1/2}\left( \sum_{c\in C} |\{(a,b)\in A\times B : F(a,b,c)=0\}|^2\right)^{1/2}\\
 & =O_d\left(|C|^{1/2}|Q|^{1/2}\right).
 \end{align*}
 In the last step we use the fact that for $(a,b)\in A\times B$, there are at most $d$ values of $c\in \C$ for which $F(a,b,c)=0$ (unless $F(x,y,z)$ contains a vertical line, but this happens at most $O_d(1)$ times).
 Again the goal is to obtain the upper bound $|Q| = O_d(n^{8/3})$ using an incidence bound, which will result in $|Z(F)\cap (A\times B\times C)|  = O_d(n^{1/2}\cdot (n^{8/3})^{1/2}) = O_d(n^{11/6})$.
 
 The set $Q$ can be viewed as the \emph{projection} of a \emph{fiber product}.
 A fiber product\footnote{This is a special case of a more general object from category theory; what we call a fiber product here is sometimes called a \emph{set-theoretic} fiber product, or also a \emph{relative product}.} of a set with itself has the form $X\times_\varphi X = \{(x,x')\in X\times X : \varphi(x) = \varphi(x')\}$ for some function $\varphi:X\to Y$.
 This type of product is useful for counting, because the Cauchy-Schwarz inequality gives 
 %$|X\times_\varphi X| \geq |X|^2/|Y|$
 $|X| \leq |X\times_\varphi X|^{1/2} |Y|^{1/2}$.
 In the calculation above, we have $X = |Z(F)\cap (A\times B\times C)|$, $Y = C$, and $\varphi(a,b,c) = c$.
 Then $Q$ is the projection of $X\times_\varphi X$ to the coordinates $(a,b,a',b')$, and has essentially the same size as $X\times_\varphi X$.
See \cite{BT12, T15} for similar uses of fiber products, as well as further discussion of the technique.
 
 The step in which we project from the fiber product to $Q$ is necessary to make the next step work; specifically, we need $Q$ to lie on a codimension one subvariety of $\C^4$, in order to be able to define the algebraic curves that we apply the incidence bound to.
 Unfortunately, this projection brings in the problem of \emph{quantifier elimination}.
The set $Q$ lies on the set 
\[\{(x,y,x',y')\in \C^4: \exists z\in \C ~\text{such that}~ F(x,y,z) = F(x',y',z)=0\},\]
which is not quite a variety, but only a \emph{constructible set} (see \cite{RSZ15} for details and references). 
We can eliminate the quantifier in the sense that there is a variety  $Z(G) \subset \C^4$ that contains the constructible set, and it differs only in a lower-dimensional set.
However, we have little grip on $G$ other than that its degree is bounded in terms of that of $F$.
Note that in the proof of Theorem \ref{thm:ERRSS} in Subsection \ref{subsec:ERproof} we had $F = f(x,y) - z$, so that we could easily eliminate $z$ to get $f(x,y ) = f(x',y')$.

To obtain $|Q| = O_d(n^{8/3})$, we define curves as in Subsection \ref{subsec:ERproof}, but this becomes more complicated due to the quantifier elimination.
We set 
\[ C_{aa'} = \{(x,y)\in \C^2: \exists z\in \C ~\text{such that}~ F(a,x,z) = F(a',y,z) =0\}.\]
This set is a one-dimensional constructible set, i.e., an  algebraic curve with finitely many points removed.
This leads to many technical complications, but we can basically still apply an incidence bound to the points and curves
\[ \pts = B\times B,~~~ \cvs = \{C_{aa'}:(a,a')\in A\times A\},\]
and we essentially have $|Q| = |I(\pts,\cvs)|$.
As in Subsection \ref{subsec:ERproof}, we can use Theorem \ref{thm:SZ} to obtain $|I(\pts,\cvs)| = O_d(n^{8/3})$, unless the curves badly violate the degrees-of-freedom condition.
The hardest part of the proof is then to connect the failure of the degrees-of-freedom condition to the special form $(ii)$ in Theorem \ref{thm:ESRSZ}.

\section{Elekes-R\'onyai problems on curves}
\label{sec:curves}

In this section we discuss some variants of the Elekes-R\'onyai and Elekes-Szab\'o problems for point sets contained in algebraic curves.
We still work with Cartesian products of ``one-dimensional'' finite sets, but instead of finite subsets of $\R$ or $\C$, we take finite subsets of algebraic curves.
We start with the first known instance of an 
Elekes-R\'onyai problem on curves, where the function is the Euclidean distance.
After that we discuss more general polynomial functions on curves, and finally we look at Elekes-Szab\'o problems on curves.

\subsection{Distances on curves}

We have already seen one instance of the Elekes-R\'onyai problem for distances on curves in Theorem \ref{thm:ERSSS}, which concerned distances between two point sets on two lines.
A natural generalization is to consider distances between two point sets on two algebraic curves in $\R^2$.
More precisely, given algebraic curves $C_1,C_2\subset \R^2$ and finite point sets $P_1\subset C_1, P_2\subset C_2$ with $|P_1|=|P_2| = n$, can we get a superlinear lower bound on $|D(P_1\times P_2)|$?\footnote{As in Subsection \ref{subsec:ERapps}, $D(p,q) = (p_x-q_x)^2+(p_y-q_y)^2$ is the squared Euclidean distance function.}

Observe that there are pairs of curves for which we cannot expect a superlinear lower bound on $|D(P_1\times P_2)|$, as we saw in Subsection \ref{subsec:ERapps} for parallel lines and orthogonal lines.
There is one more construction involving curves other than lines.
If we take two concentric circles with equally spaced points, then there is also only a linear number of distances between the two point sets.
It was proved by Pach and De Zeeuw \cite{PZ13} that these three constructions are the only exceptions to a superlinear lower bound; the proof used (once again) the setup in \cite{SSS13}, together with ad hoc arguments.

\begin{theorem}[Pach-De Zeeuw]\label{thm:PZ}
Let $C_1,C_2\subset \R^2$ be irreducible algebraic curves of degree at most $d$.
For finite subsets $P_1\subset C_1, P_2\subset C_2$ of size $n$ we have
\[|D(P_1\times P_2)| = \Omega_d(n^{4/3}), \]
unless the curves are parallel lines, orthogonal lines, or concentric circles.
\end{theorem}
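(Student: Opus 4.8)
The plan is to follow the counting scheme of Subsections~\ref{subsec:ERproof} and~\ref{subsec:ESproof}: bound a set of quadruples by the incidence estimate of Theorem~\ref{thm:SZ}, convert a small distance set into many quadruples via Cauchy--Schwarz, and then argue that the incidence bound can only fail when the two curves are parallel lines, orthogonal lines, or concentric circles.

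First I would reduce to a one-variable parametrization. An irreducible curve $C_i$ of degree at most $d$ projects onto a coordinate axis with degree $O_d(1)$, so away from finitely many points $C_i$ splits into $O_d(1)$ branches, each parametrized injectively by the $x$-coordinate through an algebraic function; along such a branch the $y$-coordinate satisfies the defining equation of $C_i$. Partitioning $P_1$ and $P_2$ according to these branches costs only a factor $O_d(1)$, so it suffices to treat the largest branch of each curve, identifying $P_1$ and $P_2$ via the $x$-coordinate with finite sets $A,B\subset\R$ of size $\Omega_d(n)$. Writing $\delta(p,q)=(p_x-q_x)^2+(p_y-q_y)^2$ for the squared distance and $p(a)\in P_1,q(b)\in P_2$ for the points with the indicated $x$-coordinate, set
\[
Q=\{(a,b,a',b')\in A\times B\times A\times B:\ \delta(p(a),q(b))=\delta(p(a'),q(b'))\}.
\]
As in~\eqref{eq:quads}, Cauchy--Schwarz gives $|Q|\gg n^4/|D(P_1\times P_2)|$, so the theorem follows once we show $|Q|=O_d(n^{8/3})$ whenever $C_1,C_2$ are not one of the three exceptional pairs.

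To bound $|Q|$, for each $(a,a')\in A\times A$ define
\[
\Gamma_{aa'}=\{(x,y)\in\R^2:\ \delta(p(a),q(x))=\delta(p(a'),q(y))\}.
\]
Since the $y$-coordinates of $q(x)$ and $q(y)$ are algebraic functions of $x$ and $y$ of bounded complexity, $\Gamma_{aa'}$ is (a projection of) an algebraic curve of degree $O_d(1)$; this is the analogue of the quantifier-elimination step in Subsection~\ref{subsec:ESproof}. With $\pts=B\times B$ and $\cvs=\{\Gamma_{aa'}:(a,a')\in A\times A\}$ we have $|I(\pts,\cvs)|=|Q|$, and Theorem~\ref{thm:SZ} yields $|Q|=O_d(n^{8/3})$ as soon as any two points of $\pts$ lie on only $O_d(1)$ of the curves. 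Using the duality between the curves $\Gamma_{aa'}$ and the corresponding $(b,b')$-curves exactly as in Subsection~\ref{subsec:ERproof}, and discarding negligible subsets of points and curves, the degrees-of-freedom condition reduces to the following statement: if many of the curves $\Gamma_{aa'}$ share common components with high multiplicity, then $C_1$ and $C_2$ are parallel lines, orthogonal lines, or concentric circles.

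This last step is the heart of the proof and the main obstacle. High multiplicity forces the squared-distance function, written in a local analytic parametrization $\gamma_i$ of $C_i$ as $f(s,t)=\|\gamma_1(s)-\gamma_2(t)\|^2$, to have the special shape $f(s,t)=\psi(\varphi_1(s)+\varphi_2(t))$ (note that the three exceptional configurations all do: one gets $\psi(s-t)$ for parallel lines and concentric circles, and $\varphi_1(s)+\varphi_2(t)$ for orthogonal lines). By Lemma~\ref{lem:derivative} this is equivalent to $\partial^2(\log|f_s/f_t|)/\partial s\,\partial t\equiv 0$, and since $f_s=2\langle\gamma_1'(s),\gamma_1(s)-\gamma_2(t)\rangle$ and $f_t=-2\langle\gamma_2'(t),\gamma_1(s)-\gamma_2(t)\rangle$, this is an explicit differential equation relating $\gamma_1$ and $\gamma_2$. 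One then has to solve it: after ruling out directly the degenerate cases $f_s\equiv 0$ or $f_t\equiv 0$, one shows that for $\gamma_1,\gamma_2$ parametrizing irreducible algebraic curves the only solutions have $\gamma_1,\gamma_2$ parametrizing two lines with parallel or orthogonal direction vectors, or two circles with a common center; since a curve agreeing locally with a line or circle is globally a line or circle, this identifies $C_1,C_2$. Conversely one checks that these three configurations genuinely yield only $O(n)$ distances, so they are true exceptions. The remaining bookkeeping---summing over branch pairs, handling the exceptional lower-dimensional subvarieties, and the case $C_1=C_2$---is routine once this differential-geometric classification is in place.
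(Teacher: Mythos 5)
Your counting scheme --- quadruples, Cauchy--Schwarz as in \eqref{eq:quads}, curves $\Gamma_{aa'}$ living on the Cartesian product $B\times B$, and Theorem \ref{thm:SZ} --- is exactly the setup of \cite{SSS13} that the Pach--De Zeeuw proof uses, and the branch decomposition and quantifier elimination are handled essentially as you describe. The gap is in the step you yourself call the heart of the proof, and it is twofold. First, the implication ``many curves $\Gamma_{aa'}$ share components with high multiplicity $\Rightarrow$ $f(s,t)=\|\gamma_1(s)-\gamma_2(t)\|^2$ is locally of the form $\psi(\varphi_1(s)+\varphi_2(t))$'' is not a formal consequence of the incidence setup; in the polynomial Elekes--R\'onyai setting the analogous implication is the main technical achievement of \cite{RSS15a}, and in the analytic setting it is essentially the content of Theorem \ref{thm:ESRSZ}, which you cannot invoke directly for curves without first passing to a surface in $\C^3$ via the generic-projection argument of Subsection \ref{subsec:ESoncurves}. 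What a shared component of $\Gamma_{a_1a_1'}$ and $\Gamma_{a_2a_2'}$ actually hands you is an algebraic curve $\gamma\subset B\times B$ along which $\delta(p(a_i),q(x))=\delta(p(a_i'),q(y))$ holds identically for $i=1,2$; extracting structural information about $C_1,C_2$ from many such coincidences is where the ``ad hoc arguments'' of \cite{PZ13} live (they differentiate these identities along $\gamma$ and analyze the resulting orthogonality conditions on the vectors $q(x)-p(a_i)$ directly, rather than passing through Lemma \ref{lem:derivative}).

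Second, even granting the local form $\psi(\varphi_1(s)+\varphi_2(t))$, the classification ``the only pairs of irreducible algebraic curves satisfying $\partial^2(\log|f_s/f_t|)/\partial s\,\partial t\equiv 0$ are parallel lines, orthogonal lines, and concentric circles'' is asserted in one sentence but is itself a genuine theorem: it is the rigidity-type analysis of Charalambides \cite{C14} and of Raz--Sharir \cite{RS16}, who needed graph-rigidity considerations precisely to carry out this step when deducing Theorem \ref{thm:PZ} from Theorem \ref{thm:ESRSZ}. The PDE does become concrete when written out (it says that $\langle\gamma_1'(s),\gamma_1(s)-\gamma_2(t)\rangle/\langle\gamma_2'(t),\gamma_1(s)-\gamma_2(t)\rangle$ splits as a product of a function of $s$ and a function of $t$), but solving it within the class of algebraic curves, and then globalizing from a local analytic arc to the whole irreducible curve, requires a real argument that the proposal does not contain. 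So the skeleton is right and matches the literature, but the two steps that make the theorem true --- rather than merely plausible --- are missing.
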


The proof in \cite{PZ13} allows $C_1$ and $C_2$ to be the same curve, which leads to a statement for distances on a single curve that is interesting in its own right.
A version of this corollary was proved earlier by Charalambides \cite{C14}, but with a weaker bound $\Omega_d(n^{5/4})$ (the same exponent as in \cite{E99}, coming from the same counting scheme).
The proof in \cite{C14} relied on an interesting connection with \emph{graph rigidity}.

\begin{corollary}[Charalambides, Pach-De Zeeuw]\label{cor:CPZ}
Let $C\subset \R^2$ be an irreducible algebraic curve of degree $d$.
For a finite subset $P\subset C$ we have
\[|D(P\times P)| = \Omega_d(n^{4/3}), \]
unless $C$ is a line or a circle.
\end{corollary}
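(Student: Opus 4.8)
The plan is to derive Corollary~\ref{cor:CPZ} directly from Theorem~\ref{thm:PZ} by taking $C_1 = C_2 = C$ and $P_1 = P_2 = P$, so the main task is to verify that the only ``exceptional'' single curve $C$ that arises this way is a line or a circle. First I would note that the exceptions in Theorem~\ref{thm:PZ} are stated for pairs of curves, so when $C_1$ and $C_2$ coincide we must ask: for which irreducible $C$ can the pair $(C,C)$ be a pair of parallel lines, a pair of orthogonal lines, or a pair of concentric circles? A single line is trivially ``parallel to itself'' in the degenerate sense, and a single circle is ``concentric with itself'', so lines and circles are genuinely exceptional. Conversely, $C$ cannot be orthogonal to itself (a line is not perpendicular to itself), and an irreducible curve that is not a line cannot coincide with a parallel line, nor can a non-circle coincide with a concentric circle. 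Hence the list of single-curve exceptions collapses exactly to: $C$ is a line or $C$ is a circle.

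There is, however, a subtlety that I expect to be the real (and essentially only) obstacle: Theorem~\ref{thm:PZ} as literally stated takes $P_1$ and $P_2$ on \emph{distinct} named curves $C_1, C_2$, and one must check that the proof in~\cite{PZ13} does not secretly use $C_1 \neq C_2$ — for instance, in defining the curves for the incidence argument, one might worry about a component collision that only happens when the two curves agree. The excerpt explicitly says ``the proof in~\cite{PZ13} allows $C_1$ and $C_2$ to be the same curve,'' so I would simply invoke that: run the argument of Theorem~\ref{thm:PZ} with $C_1 = C_2 = C$ and $P_1 = P_2 = P$, which yields $|D(P\times P)| = \Omega_d(n^{4/3})$ unless $(C,C)$ falls into one of the three exceptional families, i.e.\ unless $C$ is a line or a circle.

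Finally I would address the matching constructions showing the exceptions are real, so that the ``unless'' is sharp rather than merely an artifact of the proof. For a line, place $P$ as an arithmetic progression: the squared distances $(i-j)^2$ over $1\le i,j\le n$ take only $O(n)$ values. For a circle of radius $\rho$, parametrize $P$ by $n$ equally spaced angles $\theta_k = 2\pi k/n$; the squared distance between the points at angles $\theta_k$ and $\theta_\ell$ is $2\rho^2\bigl(1-\cos(\theta_k-\theta_\ell)\bigr)$, which depends only on $k-\ell \bmod n$ and hence takes $O(n)$ values. This confirms that on a line or a circle one can have only a linear number of distances, so no superlinear bound holds there, while Theorem~\ref{thm:PZ} (in its $C_1=C_2$ form) gives $\Omega_d(n^{4/3})$ for every other irreducible curve of bounded degree. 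Combining the two directions completes the proof.
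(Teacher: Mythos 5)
Your proposal is correct and follows essentially the same route as the paper: the survey derives Corollary~\ref{cor:CPZ} exactly by observing that the proof of Theorem~\ref{thm:PZ} in~\cite{PZ13} permits $C_1=C_2$, whereupon the three exceptional pairs collapse to ``$C$ is a line'' or ``$C$ is a circle''. Your added verification of the degenerate cases and the matching linear-distance constructions on a line and a circle are consistent with the paper's discussion and do not change the argument.
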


Let us discuss some of the new issues involved in the proofs of these results.
In the case of lines, a bound could be deduced from the Elekes-R\'onyai theorem (or proved directly), unless the lines are parallel or orthogonal.
For general algebraic curves, this does not seem possible. 
If the curves happen to be parametrized by polynomials, then plugging that parametrization into $D(p,q)$ would give a polynomial in two variables, and we could apply Theorem \ref{thm:ERRSS} (although it requires some work to translate the exceptional form of the polynomial to exceptional curves).
This was done in \cite{RSS15a} to prove that if $C$ is polynomially parametrizable, then the bound in Corollary \ref{cor:CPZ} holds, unless $C$ is a line (a circle is not polynomially parametrizable).

If the curves are not polynomially parametrizable, then this approach will not work.
The curve $y^2 = x^3+1$, for instance, has no parametrization with polynomials or rational functions.
We could locally write $y = \sqrt{x^3+1}$ and plug that into $D(p,q)$, but this gives an algebraic function in two variables; moreover, most curves do not even have such an explicit solution by radicals, and the best we can do is to use the implicit function theorem to locally write $y$ as an analytic function of $x$.
This suggests that Theorem \ref{thm:ESRSZ} may provide a larger framework for Theorem \ref{thm:PZ}.
We will see in Subsection \ref{subsec:ESoncurves} how one can manipulate a question about curves to fit it into the framework of Theorem \ref{thm:ESRSZ}.
Even then, extracting the exceptional forms of the curves from property $(ii)$ is not straightforward;
for distances on curves this was done by Raz and Sharir \cite{RS16}, who used considerations from graph rigidity (similar to those in \cite{C14}) to deduce Theorem \ref{thm:PZ} from Theorem \ref{thm:ESRSZ}.

A natural way to extend Theorem \ref{thm:PZ} or Corollary \ref{cor:CPZ} is to consider curves in higher dimensions.
This was done for Corollary \ref{cor:CPZ} by Charalambides \cite{C14} with the bound $\Omega(n^{5/4})$, using the counting scheme of \cite{E99}, together with various tools from analysis.
He determined that in $\R^3$ the exceptional curves are again lines and circles, 
while in $\R^4$ and above, the class of exceptional curves consists of so-called \emph{algebraic helices}; see \cite{C14,R16} for definitions and exact statements.
The bound in $\R^D$ was improved to $\Omega(n^{4/3})$ in \cite{RSS15a} for the case of polynomially parametrizable curves using Theorem \ref{thm:ERRSS},
and finally for all curves except algebraic helices by Raz \cite{R16}, using both Theorem \ref{thm:ESRSZ} and the analysis of Charalambides \cite{C14}.
Bronner, Sharir, and Sheffer \cite{BSS16} considered variants involving curves in $\R^D$ that are not necessarily algebraic.

Another variant of Theorem \ref{thm:PZ} was studied by Sheffer, Zahl, and De Zeeuw \cite{SZZ16}: Suppose that $P_1$ is contained in a curve $C$, and $P_2$ is an arbitrary set in $\R^2$. 
In general it is difficult to obtain bounds in this situation, 
but \cite{SZZ16} showed that if $C$ is a line or a circle, then the number of distances determined by $P_1\cup P_2$ is reasonably large. 
This result was used to show that a point set that determines $o(n)$ distinct distances cannot have too many points on a line or a circle, which is a small step towards the conjecture of Erd\H os that a set of $n$ points with $o(n)$ distinct distances must resemble an integer grid.

\subsection{Other polynomials on curves}

We can ask the same questions for any polynomial function $\R^2\times\R^2\to\R$, considered as a function $C_1\times C_2\to \R$.
For clarity, we focus on the case $C_1=C_2$. 
We also switch from $\R$ to $\C$, because some of the statements become more natural over $\C$.

Charalambides \cite{C14} also considered the function $A(p,q) = p_xq_y - p_yq_x$, which (in $\R^2$) gives twice the signed area of the triangle spanned by $p$, $q$, and the origin.
He proved that, for $P$ contained in an irreducible algebraic curve $C$ in $\R^2$, we have $|A(P\times P)| = \Omega_d(|P|^{5/4})$, unless $C$ is a line, an ellipse centered at the origin, or a hyperbola centered at the origin.
This result was generalized by Valculescu and De Zeeuw \cite{VZ15}, to any bilinear form over $C$, with a broader class of exceptional curves.
The condition on the curve is tight in the sense that for any excluded curve there is a bilinear form that can take a linear number of values on that curve.
To the authors, it was surprising that such curves can have large degree, whereas previous evidence (namely \cite{C14} and \cite{ES13}) suggested that the exceptional curves in such problems have degree at most three.

\begin{theorem}\label{thm:VZ}
Let $C$ be an irreducible algebraic curve in $\C^2$ of degree $d$,
and consider the bilinear form $B_A(p,q) = p^TAq$ for a nonsingular $2\times 2$ matrix $A$.
For $P\subset C$ we have
\[|B_A(P\times P)|=\Omega_d\left(|P|^{4/3}\right),\]
unless $C$ is a line, or linearly equivalent\footnote{We say that two curves are \emph{linearly equivalent} if there is a linear transformation $(x,y)\mapsto (ax+by,cx+dy)$ that gives a bijection between the point sets of the curves.} to a curve of the form $x^k = y^\ell$, with $k,\ell\in \Z\backslash\{0\}$.
\end{theorem}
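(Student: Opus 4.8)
The plan is to reduce Theorem~\ref{thm:VZ} to the Elekes-Szab\'o framework of Theorem~\ref{thm:ESRSZ} in the same spirit as the deduction of Theorem~\ref{thm:PZ} from Theorem~\ref{thm:ESRSZ}. First I would set up the relevant polynomial: given the irreducible curve $C\subset\C^2$, pick a (local, analytic) uniformization $t\mapsto(\xi(t),\eta(t))$ of $C$, so that a point $p\in C$ is recorded by a single coordinate $t$. Plugging two such parametrizations into the bilinear form gives an analytic function $\beta(s,t)=B_A((\xi(s),\eta(s)),(\xi(t),\eta(t)))$, and the image $B_A(P\times P)$ becomes the image of a two-variable function on a Cartesian product of one-dimensional sets. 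Introducing $w=\beta(s,t)$ and clearing the parametrization algebraically (using that $C$ is algebraic, so $\xi,\eta$ satisfy a polynomial relation), one obtains an irreducible polynomial $F(u,v,w)$, of degree bounded in terms of $d$, vanishing on the triples $(s,t,\beta(s,t))$ after suitable coordinates — equivalently, $F$ encodes ``$w$ is a value of the bilinear form between the point of $C$ indexed by $u$ and the point indexed by $v$.'' Then, exactly as in the ``Expanding polynomials'' application in Subsection~\ref{subsec:ERapps} and in the proof sketch of Theorem~\ref{thm:ESSS}, applying Theorem~\ref{thm:ESRSZ} with $A=B=$ (the index set of $P$) and $C=B_A(P\times P)$ yields either the bound $|B_A(P\times P)|=\Omega_d(|P|^{4/3})$ from the unbalanced estimate \eqref{eq:ESunbalanced}, or the special analytic form $(ii)$.

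The core of the argument is then the classification step: showing that condition~$(ii)$ of Theorem~\ref{thm:ESRSZ} forces $C$ to be a line or linearly equivalent to $x^k=y^\ell$. Here I would use the derivative test, Lemma~\ref{lem:derivative}: writing $F$ locally as $w=f(u,v)$, condition~$(ii)$ is equivalent to $f$ having the form $\psi(\varphi_1(u)+\varphi_2(v))$, which by Lemma~\ref{lem:derivative} holds if and only if $\partial^2(\log|f_u/f_v|)/\partial u\,\partial v\equiv 0$. Since $f$ comes from the bilinear form $B_A$ composed with the parametrization of $C$, this mixed-derivative vanishing condition becomes an explicit differential equation on the parametrizing functions $\xi,\eta$ (equivalently, on the defining polynomial of $C$). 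After a linear change of coordinates diagonalizing $A$ — using that $A$ is nonsingular, $B_A$ becomes a multiple of $p_1q_1\pm p_2q_2$ or, over $\C$, $p_1q_2$ up to equivalence — one can integrate this ODE and read off that the admissible curves are precisely lines and the curves $x^k=y^\ell$ (the latter being exactly the curves on which the relevant bilinear form is ``multiplicative'' in the sense of \eqref{eq:multiplicative}, the analogue over curves of the geometric-progression construction). The matching constructions showing tightness — that on each excluded curve some bilinear form takes only $O(|P|)$ values — come from choosing $P$ so that the $\varphi_i$-images are arithmetic progressions, as in the discussion after \eqref{eq:multiplicative}.

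Several technical points need care. One is the passage from the local analytic parametrization of $C$ to a genuine polynomial $F$: this requires the quantifier-elimination / resultant step to eliminate the parameter $t$, and one must check $F$ is irreducible (or pass to an irreducible component meeting the relevant triples) and that its partial derivatives $F_u,F_v,F_w$ are not identically zero, as required by the hypotheses of Theorem~\ref{thm:ESRSZ}; degenerate cases (e.g.\ $C$ a coordinate axis, or $B_A$ constant along $C$) must be handled separately and correspond exactly to the excluded lines. A second point is that Lemma~\ref{lem:derivative} only detects the two-variable additive/multiplicative form, whereas condition~$(ii)$ of Theorem~\ref{thm:ESRSZ} is a three-variable symmetric condition; one needs that for $F$ coming from $w=f(u,v)$ the two are equivalent, which is spelled out in the discussion following Lemma~\ref{lem:derivative} and in the proof sketch of Theorem~\ref{thm:ESSS}.

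The main obstacle, I expect, is the classification step — solving the differential equation $\partial^2(\log|f_u/f_v|)/\partial u\,\partial v\equiv 0$ in terms of the curve $C$ rather than in terms of an abstract parametrization, and thereby producing the precise and (surprisingly) unbounded-degree list of exceptional curves $x^k=y^\ell$. This is where the geometry of the bilinear form genuinely enters, and where the earlier intuition that exceptional curves have degree $\le 3$ breaks down; extracting the clean linear-equivalence classification from the analytic special form is exactly the kind of ad hoc work that \cite{RS16} carried out for distances, and the analogue for bilinear forms is the technical heart of \cite{VZ15}.
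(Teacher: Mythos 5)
Your reduction to Theorem \ref{thm:ESRSZ} via a generic projection is plausible in outline, and the numerology is right (the unbalanced bound \eqref{eq:ESunbalanced} does give $\Omega_d(|P|^{4/3})$ once condition $(ii)$ is ruled out), but the step you yourself flag as ``the main obstacle'' is a genuine gap, not a technicality. Deducing from condition $(ii)$ --- a local analytic special form of a projected polynomial $F(u,v,w)$ --- that the original curve $C$ is a line or linearly equivalent to $x^k=y^\ell$ is exactly the ``transfer back'' problem that Subsection \ref{subsec:ESoncurves} describes as unresolved in general: it has been carried out only for collinear triples, and for distances only via the graph-rigidity arguments of \cite{RS16}. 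Your plan for this step is to ``integrate the ODE'' $\partial^2(\log|f_u/f_v|)/\partial u\,\partial v\equiv 0$, where $f$ is $B_A$ composed with an unknown local analytic parametrization of $C$; no method is offered for solving this equation for an arbitrary algebraic curve, and it is precisely here that the unbounded-degree family $x^k=y^\ell$ has to emerge. Asserting that the classification can be ``read off'' leaves the technical heart of the theorem unproved. (A smaller inaccuracy: congruence $A\mapsto M^TAM$ is the only normalization available when both arguments lie on the same curve, and it does not diagonalize a general nonsingular $A$; the antisymmetric case $p_xq_y-p_yq_x$, Charalambides's area form, is one of the essential cases and is not of the shape $p_1q_1\pm p_2q_2$.)

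The actual mechanism, as the discussion surrounding the theorem indicates, is different and more algebraic. The proof in \cite{VZ15} runs the quadruple/incidence scheme of Subsection \ref{subsec:ERproof} directly on $C$: one bounds $Q=\{(p,q,p',q')\in P^4: B_A(p,q)=B_A(p',q')\}$ by applying Theorem \ref{thm:SZ} to the curves $C_{pp'}=\{(q,q')\in C\times C: p^TAq=p'^TAq'\}$, and because $B_A$ is bilinear, the failure of the degrees-of-freedom condition forces the existence of infinitely many linear maps $T$ with $T(C)=C$. The exceptional curves are then obtained from a self-contained classification: an irreducible curve with infinitely many linear automorphisms is preserved by a one-dimensional algebraic subgroup of $\mathrm{GL}_2(\C)$, which up to conjugation is a torus $\lambda\mapsto\mathrm{diag}(\lambda^k,\lambda^\ell)$ or a unipotent group, and the invariant curves of these are precisely lines and curves linearly equivalent to $x^k=y^\ell$. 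If you want to keep your Elekes--Szab\'o route, you need to supply a substitute for this classification; as written, the proposal replaces the one step that constitutes the proof with an appeal to an unsolved differential equation.
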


The description of the exceptional curves in Theorem \ref{thm:VZ} is very succinct but requires some clarification.
Lines through the origin are linearly equivalent to  a curve of the form $x^k=y^\ell$, but other lines are not, which is why they are listed separately.
When $k$ or $\ell$ is negative, one obtains a more natural polynomial equation after multiplying by an appropriate monomial.
Thus hyperbola-like curves of the form $x^ky^\ell=1$ with coprime $k,\ell\geq 1$ are included, since they can also be defined by $x^k = y^{-\ell}$. 
Ellipses centered at the origin are also included, since these are linearly equivalent to the unit circle $(x-iy)(x+iy)=1$, which is linearly equivalent to $xy=1$. Thus all the exceptional curves of Charalambides are special.
Note that these curves are all \emph{rational}, i.e., they have a parametrization by rational functions.

The reason that these curves are exceptional in the proof of Theorem \ref{thm:VZ} is that they have infinitely many \emph{linear automorphisms}.
Here an \emph{automorphism} of a curve $C$ is a map $T:\C^2\to\C^2$ such that $T(C) = C$, and it is \emph{linear} if it is a linear transformation (and similarly one can define affine, projective, or rational automorphisms).
It was proved in \cite{VZ15} that the algebraic curves with infinitely many linear automorphisms are exactly those excluded in Theorem \ref{thm:VZ}.
From the proof of Theorem \ref{thm:VZ} in \cite{VZ15}, it appears that if one considers more general polynomial functions instead of $B_A$, 
the exceptional curves will be those that have infinitely many rational automorphisms.
By a theorem of Hurwitz (see for instance \cite[Exercise IV.2.5]{Ha77}), a nonsingular curve with infinitely many rational automorphisms must have genus zero or one, or in other words, it must be rational or elliptic.

If we consider the statement of Theorem \ref{thm:VZ} for more general polynomials, then we also encounter exceptional polynomials that can take a linear number of values on \emph{any} curve.
Already for the bilinear forms $B_A$, this occurs when $A$ is a singular matrix; in that case we can write $B_A(p,q) = L_1(p)\cdot L_2(q)$ with linear polynomials $L_1,L_2$, which is reminiscent of the multiplicative form in Theorem \ref{thm:ERRSS}.
More generally, for functions of the form $G(H(p)+K(q))$ or $G(H(p)\cdot K(q))$ there are exceptional constructions.
We arrive at the following conjecture (where again we could conjecture a larger exponent).

\begin{conjecture}\label{conj:ERoncurves}
Let $C\subset \C^2$ be an algebraic curve of degree at most $d$ and $F:C\times C\to \C$ a polynomial of degree  at most $d$.
Then for any $P\subset C$ we have
$$|F(P\times P)|=\Omega_d(|P|^{4/3}),$$
unless $F(p,q) = G(H(p)+K(q))$ or $F(p,q)=G(H(p)\cdot K(q))$,
or unless $C$ is rational.
\end{conjecture}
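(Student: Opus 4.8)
The natural plan is to reduce Conjecture~\ref{conj:ERoncurves} to the Elekes--Szab\'o theorem (Theorem~\ref{thm:ESRSZ}), in the same spirit as the deduction of Theorem~\ref{thm:ERRSS} sketched after \eqref{eq:analyticspecial}, and as carried out by Raz and Sharir \cite{RS16} for distances on curves. First I would attach an auxiliary surface in $\C^3$ to the pair $(C,F)$. After a linear change of coordinates, the first-coordinate function $x$ restricts to a finite map $C\to\C$ of degree $O_d(1)$; let $Z(\mathcal F)$ be the Zariski closure of the image of $C\times C$ under the rational map $(p,q)\mapsto\bigl(x(p),x(q),F(p,q)\bigr)$. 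Since this map is quasi-finite, $Z(\mathcal F)$ is an irreducible surface cut out by some $\mathcal F\in\C[x,y,z]$ of degree $O_d(1)$, and after discarding degenerate $F$ (for instance $F$ constant, or $F$ depending on only one of $p,q$, which trivially have an additive form) we may arrange that $\mathcal F_x,\mathcal F_y,\mathcal F_z\not\equiv 0$. As in Subsection~\ref{subsec:ESproof}, the ``quantifier elimination'' implicit in taking the closure alters the relevant set only in a lower-dimensional piece. Now, given $P\subset C$, put $A=x(P)$ (so $|A|=\Theta_d(|P|)$) and $D=F(P\times P)$; the image of $P\times P$ accounts for $\Omega_d(|P|^2)$ points of $Z(\mathcal F)\cap(A\times A\times D)$. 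If conclusion $(i)$ of Theorem~\ref{thm:ESRSZ} holds, the unbalanced bound \eqref{eq:ESunbalanced} yields
\[
|P|^2 \;=\; O_d\!\left(|A|^{4/3}|D|^{1/2}\right)\;=\;O_d\!\left(|P|^{4/3}|D|^{1/2}\right),
\]
hence $|F(P\times P)|=\Omega_d(|P|^{4/3})$, as wanted. (Alternatively one could bypass Theorem~\ref{thm:ESRSZ} and run the incidence argument of Subsection~\ref{subsec:ESproof} directly, as was done for distances on curves before Theorem~\ref{thm:ESRSZ} was available.)

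It remains to analyze conclusion $(ii)$ of Theorem~\ref{thm:ESRSZ}. Composing the local functions there with $x$ shows that, locally on $C\times C$ off a curve, $F(p,q)=\psi\bigl(H(p)+K(q)\bigr)$ with $\psi,H,K$ analytic; equivalently, by the ``global'' form of $(ii)$, there are a one-dimensional connected algebraic group $\mathcal G$ and algebraic multi-valued maps $p\mapsto H(p)$, $q\mapsto K(q)$, $c\mapsto\Phi(c)$ into $\mathcal G$ with $H(p)\oplus K(q)\oplus\Phi\bigl(F(p,q)\bigr)=e$. There are exactly three possibilities for $\mathcal G$. If $\mathcal G=(\C,+)$ the three maps are honest one-variable rational functions, and rearranging gives $F(p,q)=G\bigl(H(p)+K(q)\bigr)$ with $G,H,K$ rational; if $\mathcal G=(\C\backslash\{0\},\cdot)$ one similarly gets $F(p,q)=G\bigl(H(p)\cdot K(q)\bigr)$. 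These are exactly the two special forms in the conjecture, modulo the standard (but not entirely routine) task of replacing the rational maps by polynomial representatives and upgrading the local identity to a global one --- compare the remark after \eqref{eq:analyticspecial} and \cite[Theorem 41]{T15}.

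The remaining, and I expect hardest, case is $\mathcal G$ elliptic. Then $H$ and $\Phi$ assemble into non-constant correspondences from $C$, respectively from $\C$, to an elliptic curve $E$, and the relation expresses $F$ as essentially the pullback under a map $C\to E$ of the coordinate of $p\oplus q$ on $E$. Such a function necessarily has poles along the preimage of the pole divisor on $E$, so it cannot extend to a regular function on all of $C\times C$ unless the elliptic structure degenerates --- and this is precisely where the hypothesis that $F$ is a \emph{polynomial}, rather than merely a rational function, must be exploited. The crux of the whole proof will be to make this rigorous: to show that ``$F$ polynomial and $C$ not rational'' rules out the elliptic case, so that $\mathcal G\in\{\mathbb{G}_a,\mathbb{G}_m\}$ and one of the stated forms holds; failing that, one may be forced to enlarge the list of exceptions to include curves admitting a non-constant map to an elliptic curve. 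Extracting such structural conclusions about $C$ and $F$ from condition $(ii)$ was already the difficult step in \cite{RS16} and in \cite{VZ15}, and the situation here is at least as delicate. Finally, the converse direction --- that additive $F$, multiplicative $F$, and rational $C$ are genuine exceptions --- is routine: use arithmetic, respectively geometric, progressions for the first two, and transport the Valculescu--De~Zeeuw constructions (for instance on $x^k=y^\ell$, Theorem~\ref{thm:VZ}) through a rational parametrization for the third.
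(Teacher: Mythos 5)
This statement is posed in the survey as an open conjecture; the paper contains no proof of it, only the surrounding discussion in Subsection \ref{subsec:ESoncurves}, so your proposal cannot be measured against an existing argument --- it can only be judged as a plan of attack. As a plan it follows exactly the route the survey itself sketches for Elekes--Szab\'o problems on curves: project $C\times C$ generically to get a surface $Z(\mathcal F)\subset\C^3$, apply Theorem \ref{thm:ESRSZ}, and in case $(i)$ read off the bound from the unbalanced estimate \eqref{eq:ESunbalanced}. That quantitative half is sound (with the minor caveats you note about irreducibility and nonvanishing partials of $\mathcal F$, and the fact that Conjecture \ref{conj:ERoncurves} does not assume $C$ irreducible).

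The proposal is nevertheless not a proof, and the gap sits precisely where the survey locates the difficulty: ``it is not clear how to transfer back property $(ii)$ for $F$ to the original setting.'' Two specific steps are missing. First, even granting the trichotomy $\mathcal G\in\{\mathbb{G}_a,\mathbb{G}_m,E\}$ from the global form of condition $(ii)$, converting the resulting local analytic (or multi-valued algebraic) identity into the global polynomial forms $G(H(p)+K(q))$ and $G(H(p)\cdot K(q))$ required by the conjecture is not a routine upgrade; the survey explicitly remarks, after \eqref{eq:analyticspecial}, that no one knows how to recover even the full Theorem \ref{thm:ERRSS} from its local analytic consequence, and this is exactly the step where \cite{RS16} and \cite{VZ15} had to invoke substantial ad hoc structure theory (rigidity, classification of linear automorphisms). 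Second, your elimination of the elliptic case is a heuristic about poles rather than an argument: since the conjecture excludes only \emph{rational} $C$, you must actually rule out non-rational curves dominating an elliptic curve, and your own fallback --- that one may have to enlarge the list of exceptions --- concedes that you cannot. The survey's remark before Theorem \ref{thm:VZ}, that infinitely many rational automorphisms force genus $0$ or $1$ by Hurwitz, indicates that elliptic curves are a live threat; deciding whether they are genuine exceptions is the substance of the conjecture, not a detail that can be deferred.
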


\subsection{Elekes-Szab\'o problems on curves}
\label{subsec:ESoncurves}

An Elekes-Szab\'o theorem on curves would take the following form.
Let $\cv_1, \cv_2, \cv_3\subset \C^2$ be algebraic curves of degree at most $d$, and let $G\in \C[x,y,s,t,u,v]$ be a polynomial of degree at most $d$.
Then for point sets $P_1\subset\cv_1,P_2\subset\cv_2,P_3\subset\cv_3$ of size $n$, we would want to bound $|Z(G)\cap (P_1\times P_2\times P_3)|$.
We would expect exceptions for certain $G$, of a form related to that in Theorem \ref{thm:ESRSZ}$(ii)$,
and we would expect exceptions for certain curves, including low-degree curves and those in Conjecture \ref{conj:ERoncurves}.
We won't state a full conjecture here, but we will discuss some of the instances that have been considered.

One possible choice of polynomial is
\[G(x,y,s,t,u,v) = \frac{1}{2}\begin{vmatrix} x&s&u\\y&t&v\\1&1&1\end{vmatrix} - 1,\]
for which $G(x,y,s,t,u,v)=0$ if and only if the triangle determined by the points $(x,y),(s,t),(u,v)$ has unit area.
The problem of determining the maximum number of unit area triangles determined by $n$ points in $\R^2$ is ascribed to Oppenheim in \cite{EP71}. 
The best known lower bound is $\Omega(n^2\log\log n)$, due to Erd\H os and Purdy \cite{EP71}, and the best known lower bound is $O(n^{20/9})$, due to Raz and Sharir \cite{RS15a}.
It is easy to show that for $n$ points on a curve, or three sets of $n$ points on three curves, 
the bound $O(n^2)$ holds.
The following problem was suggested by Solymosi and Sharir \cite{SS15}.

\begin{problem}
Given $n$ points on an algebraic curve $\cv$ in $\R^2$, 
prove a bound better than $O(n^2)$ on the number of unit area triangles, 
or show that there are point sets on $\cv$ with $\Omega(n^2)$ unit area triangles.
\end{problem}

The only case of this problem that has been studied is the one where $\cv$ is the union of three distinct lines.
Surprisingly, Raz and Sharir \cite{RS15a} showed that on any three distinct lines there are non-trivial constructions that determine $\Omega(n^2)$ unit area triangles.
They discovered this using the derivative criterion in Lemma \ref{lem:derivative}.

Another choice of $G$ (which we won't try to write out) is the polynomial such that $G(x,y,s,t,u,v)=0$ if the points $(x,y),(s,t),(u,v)$ lie on a common unit circle, or in other words, the circle determined by the three points has radius equal to one.
Since the radius of the circle determined by three points can be written as a rational function, multiplying out denominators gives a polynomial $G$ with the property above.
Raz, Sharir, and Solymosi showed that for three distinct unit circles in $\R^2$ with three finite point sets of size $n$, 
the number of determined unit circles is $O(n^{11/6})$ (this statement is equivalent to Theorem \ref{thm:ESRSS}).
One can generalize this problem as follows.

\begin{problem}
Given three algebraic curves $\cv_1, \cv_2, \cv_3\subset \R^2$ containing three sets of $n$ points,
prove a bound better than $O(n^2)$ 
on the number of unit circles containing one point from each of the three sets.
Are there curves for which $\Omega(n^2)$ unit circles is possible?
\end{problem}

Another problem of this form is to bound \emph{collinear triples} on curves. 
More is known on this problem, and we will discuss it in detail in the next subsection.
Note that it can be seen as a degenerate case of the unit area triangle problem, where instead of unit area we ask for \emph{zero} area.

\bigskip

Let us see how the Elekes-Szab\'o problem on curves is related to the Elekes-Szab\'o problem in $\C^3$.
We can think of the three curves $\cv_1, \cv_2, \cv_3\subset \C^2$ as a Cartesian product $\cv_1\times \cv_2\times \cv_3\subset \C^6$.
We can choose a generic projection $\varphi:\C^2\to\C$, in such a way that the three-fold product $\pi=(\varphi\times\varphi\times\varphi):\C^6\to \C^3$ maps a product $P_1\times P_2\times P_3\subset \cv_1\times \cv_2\times \cv_3$ with $|P_1|=|P_2|=|P_3|=n$ to a product $\varphi(P_1)\times \varphi(P_2)\times \varphi(P_3)\subset \C^3$ with $\varphi(P_1) = \varphi(P_2) = \varphi(P_3) = n$.

The variety $X = Z(F)\cap (\cv_1\times \cv_2\times \cv_3)$ is two-dimensional (unless $G$ happens to vanish on $\cv_1\times \cv_2\times \cv_3$, in which case the problem is trivial).
Again by choosing $\varphi$ generically, we get that $\pi(X)\subset \C^3$ is also a two-dimensional variety, and thus can be written as $X = Z(F)$.
Now we have
\[|Z(G)\cap (P_1\times P_2\times P_3)| \leq  |Z(F)\cap (\varphi(P_1)\times \varphi(P_2)\times \varphi(P_3))|,\]
so if the upper bound of Theorem \ref{thm:ESRSZ}$(i)$ applies to $F$, then we also have that upper bound for $G$ on $\cv_1\times \cv_2\times \cv_3$.
Otherwise, $F$ satisfies property $(ii)$ of Theorem \ref{thm:ESRSZ}.

Unfortunately, it is not clear how to transfer back property $(ii)$ for $F$ to the original setting.
Note that $F$ not only encodes information about $G$, but also about the curves $\cv_1,\cv_2,\cv_3$.
Thus property $(ii)$ for $F$ should imply that either $G$ has an exceptional form, or the curves $\cv_1,\cv_2,\cv_3$ have an exceptional form.
This was made to work for the problem of collinear triples (see Subsection \ref{subsec:collineartriples}), but it remains difficult to do this in general.

\bigskip

The projection from $\C^6$ to $\C^3$ above lets us connect Theorem \ref{thm:ESRSZ} to the group law on elliptic curves (irreducible nonsingular algebraic curves of degree three).
We refer to \cite{ST92} for an introduction to the group structure of an elliptic curve, but we summarize it here as follows: 
On any elliptic curve $E$ there are an operation $\oplus$ and an identity element $\mathcal{O}$  that turn the point set $E$ into a group, with the special property that $P,Q,R\in E$ are collinear if and only if $P\oplus Q\oplus R = \mathcal{O}$.

This group structure on elliptic curves allows us to construct finite point sets with many collinear triples.
If we take a finite subgroup $H$ of size $n$ of an elliptic curve $E$ (which exists for any $n$), then $|Z(G)\cap (H\times H\times H)| = \Omega(n^2)$, where $G$ is the polynomial that represents collinearity.
This follows from the fact that for any two distinct elements $P,Q\in H$, the line spanned by $P$ and $Q$ intersects $E$ in $R = \ominus (P\oplus Q)$ (the group element $R$ such that $(P\oplus Q)\oplus R = \mathcal{O}$), which must also be in the subgroup $H$.
We could have $R = P$ or $R = Q$, but there are only $O(n)$ pairs that satisfy $P\oplus P\oplus Q = \mathcal{O}$, so we get $\Omega(n^2)$ collinear triples of distinct points.

Applying the generic projection $\pi:\C^6\to \C^3$, we get a polynomial $F\in \C[x,y,z]$ such that $|Z(F)\cap (\varphi(H)\times \varphi(H)\times \varphi(H))|=\Omega(n^2)$, which implies that $G$ satisfies property $(ii)$ of Theorem \ref{thm:ESRSZ}.
It follows that, locally, there are $\varphi_i:\C\to \C$ such that $P,Q,R\in E$ are collinear if and only if $\varphi_1(\varphi(P)) + \varphi_2(\varphi(Q)) + \varphi_3(\varphi(R)) = 0$. 
This is why it appears that the local analytic nature of property $(ii)$ is necessary; it would be a big surprise if such maps existed globally, or could be described by polynomial or rational functions.

\subsection{Collinear triples on curves}
\label{subsec:collineartriples}

The one instance of an Elekes-Szab\'o problem on curves that has been studied in some detail (and is not an Elekes-R\'onyai problem) is the problem where $F$ represents collinearity.
This question was first considered by Elekes and Szab\'o \cite{ES13}; they proved a weaker bound in $\R^2$ using the main bound from \cite{ES12}, which was then improved in \cite{RSZ15} to the following statement.

\begin{theorem}[Elekes-Szab\'o, Raz-Sharir-De Zeeuw]\label{thm:ESRSZcoltrip}
Let $\cv$ be an irreducible algebraic curve in $\C^2$ of degree $d$ and let $P\subset \cv$ be a finite set.
Then $P$ determines $O_d(n^{11/6})$ proper collinear triples, unless $\cv$ is a line or a cubic curve.
\end{theorem}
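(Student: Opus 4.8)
The plan is to bring the problem into the scope of Theorem~\ref{thm:ESRSZ} via the generic-projection reduction sketched in Subsection~\ref{subsec:ESoncurves}. Write $n=|P|$ and let $X\subset\cv^3$ be the Zariski closure of the set of ordered triples of \emph{distinct} collinear points of $\cv$; the proper collinear triples of $P$ are exactly the triples of distinct points of $P$ lying in $X$. If $\cv$ is a conic then $X=\emptyset$ and there is nothing to prove, so we may assume $d\ge 4$ (the case $d=3$ being excluded from the statement). Then $X$ is two-dimensional. Pick a generic linear projection $\pi_0\colon\C^2\to\C$; then $\pi:=\pi_0\times\pi_0\times\pi_0$ is injective on $P\times P\times P$, and $\overline{\pi(X)}$ is a two-dimensional variety $Z(F)$ in $\C^3$, where for generic $\pi_0$ one checks that $F$ is irreducible with $F_x,F_y,F_z\not\equiv 0$ (the only possibility to exclude is that $Z(F)$ contains a coordinate plane, which cannot occur for generic $\pi_0$ since $\cv$ is not a line; if $X$ happens to be reducible one treats its two-dimensional components separately). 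Since distinct collinear triples of $P$ are carried injectively by $\pi$ into $Z(F)\cap(\pi_0(P))^3$, we get
\[
\#\{\text{proper collinear triples of }P\}\ \le\ |Z(F)\cap(\pi_0(P)\times\pi_0(P)\times\pi_0(P))|,
\]
and here $|\pi_0(P)|=n$. Now apply Theorem~\ref{thm:ESRSZ} to $F$: in case~$(i)$ the right-hand side is $O_d(n^{11/6})$ and we are done, so it remains to rule out case~$(ii)$ under the assumption $d\ge 4$.

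Suppose $F$ satisfies condition~$(ii)$. Since $\pi_0$ is a local biholomorphism on a branch of $\cv$ at a generic point, the analytic structure of $Z(F)$ near a generic point agrees with that of $X$ near a generic triple $(p_0,q_0,r_0)$ of distinct collinear points; near such a triple the line through $p_0$ and $q_0$ meets the branch of $\cv$ at $r_0$ transversally, so the collinear triples near $(p_0,q_0,r_0)$ form the graph $x_r=f(x_p,x_q)$ of an analytic ``third-point map'' $f$ of $\cv$, expressed in the $\pi_0$-coordinates $x_p=\pi_0(p)$, etc. Condition~$(ii)$ then forces $f(x_p,x_q)=\psi(\varphi_1(x_p)+\varphi_2(x_q))$, which by the derivative criterion of Lemma~\ref{lem:derivative} is equivalent to
\[
\frac{\partial^2\bigl(\log|f_{x_p}/f_{x_q}|\bigr)}{\partial x_p\,\partial x_q}\equiv 0;
\]
equivalently, in the ``global'' form of~\cite{ES12}, the collinearity relation on $\cv$ is governed by the group law of a one-dimensional connected algebraic group. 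The heuristic reason this is impossible for $d\ge 4$ is that a genuine binary group operation would make the \emph{residual intersection} of a line with $\cv$ a single point, whereas for $d\ge 4$ a generic line meeting $\cv$ in $d$ distinct points has $d-2\ge 2$ residual points; concretely, two such residual points of the same line would be forced to coincide in (a third slot of) the algebraic group, contradicting that the generic $\pi_0$ separates them. Hence condition~$(ii)$ fails for $d\ge 4$, case~$(i)$ applies, and the theorem follows. (The hypothesis is sharp: on a smooth cubic a subgroup of size $n$ of the elliptic-curve group law gives $\Omega(n^2)$ proper collinear triples, a nodal or cuspidal cubic admits an analogous $\mathbb G_m$- or $\mathbb G_a$-construction, and a line trivially has $\Theta(n^3)$ collinear triples.)

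The step I expect to be the real obstacle is the last one: converting condition~$(ii)$ --- a purely \emph{local}, \emph{analytic} statement about the projected surface --- into the rigid algebraic constraint ``$d\le 3$'' on $\cv$. Either one must compute or control the mixed derivative of the third-point map of an arbitrary (possibly non-parametrizable, possibly singular) degree-$d$ curve and show it is not identically zero when $d\ge 4$, or one must pass to the global group-theoretic form of~\cite{ES12} and carry out the residual-point argument while tracking the monodromy of the covering $\pi_0|_\cv$ and of the collinearity correspondence, so that the group elements attached to the intersection points of a varying line can be chosen coherently --- naively ``subtracting'' two collinearity relations is not quite legitimate before this bookkeeping is in place. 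There are also the subsidiary technical points of keeping $\pi_0$ generic enough that the projected varieties genuinely meet the hypotheses of Theorem~\ref{thm:ESRSZ}, and of handling a reducible $X$. These are precisely the issues resolved by Raz, Sharir, and De Zeeuw in~\cite{RSZ15}, building on Elekes and Szab\'o~\cite{ES13}.
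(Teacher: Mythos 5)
Your reduction --- projecting the collinearity variety $X\subset\cv^3$ by a generic $\pi_0\times\pi_0\times\pi_0$ to a surface $Z(F)\subset\C^3$ and applying Theorem~\ref{thm:ESRSZ} --- is exactly the route outlined in Subsection~\ref{subsec:ESoncurves}, and the surrounding bookkeeping (injectivity of $\pi_0$ on $P$, two-dimensionality of $X$, reducible $X$, nonvanishing of the partials of $F$) is handled adequately. The genuine gap is the one you yourself flag at the end: everything the theorem says beyond Theorem~\ref{thm:ESRSZ} is contained in the implication ``condition $(ii)$ holds $\Rightarrow$ $\cv$ is a line or a cubic,'' and your argument for this is a heuristic that does not work as stated. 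Concretely, for $d\ge 4$ the line through $p$ and $r$ meets $\cv$ in two further points $q\neq q'$, and you want to subtract the relations $\varphi_1(x_p)+\varphi_2(x_q)+\varphi_3(x_r)=0$ and $\varphi_1(x_p)+\varphi_2(x_{q'})+\varphi_3(x_r)=0$ to force $x_q=x_{q'}$. But $(x_p,x_q,x_r)$ and $(x_p,x_{q'},x_r)$ are \emph{different} points of $Z(F)$, lying on different sheets over the pair $(x_p,x_r)$, and condition $(ii)$ only furnishes local functions $\varphi_i$ in a separate neighborhood of each point, with no guarantee that the two triples are governed by the same $\varphi_i$. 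Making the subtraction legitimate is precisely the monodromy/analytic-continuation (or global algebraic-group) argument you defer to \cite{ES12,ES13,RSZ15}, and it is not reproduced here; the alternative route via the mixed-derivative criterion of Lemma~\ref{lem:derivative} is likewise only named, not carried out for a general degree-$d$ curve. So what you have is a correct reduction together with a correct diagnosis of where the difficulty sits, not a proof of the theorem.

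As a minor positive point, your sharpness discussion (the elliptic group law on smooth cubics and its degenerations on singular or reducible cubics, and the triviality for conics) matches the paper's account in Subsection~\ref{subsec:collineartriples}.
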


As we saw at the end of Subsection \ref{subsec:ESoncurves}, elliptic curves must be exceptions to this statement, because their group structure gives constructions with a quadratic number of collinear triples.
In fact, on any cubic curve (including the union of a conic and a line, or a union of three lines) there is a ``quasi-group law''; see Green and Tao \cite[Proposition 7.3]{GT13}. 
This law does not quite give the whole point set a group structure (and there may not be an identity element), but comes close enough to allow for a construction with $\Omega(n^2)$ collinear lines.
These constructions are worked out in \cite{ES13}.

Green and Tao \cite{GT13} proved the Dirac-Motzkin conjecture for large point sets, which states that any non-collinear point set $P\subset\R^2$ of size $n$ determines at least $n/2$ \emph{ordinary lines} (lines containing exactly two points of $P$).
As a by-product, they also solved Sylvester's ``orchard problem'' for large $n$, which asks for the maximum number of lines with at least three points (\emph{triple lines}) from a set of $n$ points.
It is easy to see that this number is at most $\frac{1}{3}\binom{n}{2}$, and Sylvester noted (in 1868; see \cite{GT13}) that there are constructions on elliptic curves with almost exactly this number of triple lines.
Green and Tao showed that any set with at least $\frac{1}{6}n^2 - O(n)$ triple lines must have most of its points on a cubic curve.
Elekes made the bolder conjecture that any set with $\Omega(n^2)$ collinear triples has many points on a cubic;
to underline our ignorance he suggested to show that ten of the points are on a cubic (which is one more than the trivial number).
The conjecture was stated in \cite{ES13} but already present in disguise in \cite{E02}.

\begin{conjecture}[Elekes]
If $P\subset \R^2$ determines $\Omega(n^2)$ collinear triples, then at least ten points of $P$ lie on a cubic.
\end{conjecture}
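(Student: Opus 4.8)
This conjecture is open; what follows is a plan of attack, together with where I expect it to stall.

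\emph{Orienting the hard case.} Suppose for contradiction that every cubic curve in $\R^2$ — reducible or not — contains at most nine points of $P$. Since a union of at most three lines is contained in a cubic, and a conic together with a line is a cubic, this already forces every line and every conic to contain at most nine points of $P$. Writing the number of proper collinear triples as $\sum_\ell \binom{|\ell\cap P|}{3}$, the sum over lines $\ell$ with $|\ell\cap P|\ge 3$, and using $|\ell\cap P|\le 9$, we conclude that $P$ must span $\Omega(n^2)$ lines with at least three points. So the hard case is exactly the \emph{orchard} regime: a sparse, almost-linear collinearity hypergraph (any two points lie in $O(1)$ triple lines) with $\Theta(n^2)$ triple lines, and the task is to derive a contradiction.

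\emph{A Cayley--Bacharach engine.} The plan is to extract rigid subconfigurations and feed them to the Cayley--Bacharach theorem. By a Cauchy--Schwarz/counting argument over the triple lines one aims to find a combinatorial $3\times 3$ grid: nine points $q_{ij}\in P$ ($1\le i,j\le 3$) whose three rows and three columns are each proper collinear triples of $P$, with the six spanning lines in general position. Then the union of the three row-lines and the union of the three column-lines are two cubics meeting in exactly the nine points $q_{ij}$ and sharing no component, so by Cayley--Bacharach every cubic through eight of the $q_{ij}$ passes through the ninth. Following the strategy by which Green and Tao \cite{GT13} settled the orchard problem, one would then locate many overlapping such grids and iterate this ``ninth point'' relation, so that the collinearity structure propagates along the group law of a single cubic $\Gamma$ — once a seed of nine points pins $\Gamma$ down — forcing $\Gamma$ through at least ten, indeed $\Omega(n)$, points of $P$, contradicting the reduction above. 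A variant endgame is to first prove that a positive fraction of $P$, carrying $\Omega(n^2)$ of its collinear triples, lies on one bounded-degree algebraic curve, pass to an irreducible component with $\Omega(n)$ points and $\Omega(n^2)$ collinear triples among them, and apply Theorem \ref{thm:ESRSZcoltrip}: such a component is a line or a cubic, the line case yielding far more than ten collinear points and the cubic case being the conclusion.

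\emph{Main obstacle.} Both routes rest on an inverse theorem of the shape ``$\Omega(n^2)$ collinear triples $\Rightarrow$ cubic structure''. The forward direction — a finite subgroup of an elliptic curve, or a quasi-group on a cubic, has $\Omega(n^2)$ collinear triples — is elementary, so all the difficulty is the converse. In the near-extremal regime of $\tfrac16 n^2 - O(n)$ triple lines this converse is precisely Green and Tao's hard theorem, and it exploits near-extremality to force the rigid combinatorial skeleton from which the cubic is reconstructed; for a mere $cn^2$ triple lines the configuration can be combinatorially far from any group configuration, and I expect the main obstacle to be exactly this: producing enough overlapping Cayley--Bacharach grids — equivalently, concentrating a positive fraction of the collinear triples onto one bounded-degree curve — without the extra rigidity that extremality supplies. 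This is the gap that keeps even the deliberately weak ``ten points'' form of the conjecture open.
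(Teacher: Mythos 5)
You have correctly recognized that this statement is an open conjecture --- the paper records it as Elekes's conjecture precisely because no proof is known --- so there is no proof here to compare against, and presenting a plan of attack with an explicitly flagged gap is the appropriate response. Your opening reduction is sound: if no cubic contains ten points of $P$, then in particular no line does (any line sits inside a reducible cubic), so each line carries $O(1)$ collinear triples and $\Omega(n^2)$ collinear triples forces $\Omega(n^2)$ triple lines. This is indeed the orchard regime, and it matches the paper's framing of the conjecture as a robust, low-density version of Green and Tao's structure theorem for sets with $\frac{1}{6}n^2 - O(n)$ triple lines.

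Two caveats on the proposed engine, both subsumed by, but sharper than, your stated obstacle. First, a single $3\times 3$ Cayley--Bacharach grid only exhibits nine points on a reducible cubic (the union of the three row-lines), which does not yet contradict the nine-point hypothesis; everything rides on the propagation step that welds many overlapping grids onto one cubic $\Gamma$, and that is exactly the step Green and Tao can only execute using near-extremality --- their additive-combinatorial machinery requires the ordinary-line count to be close to minimal, whereas a set with merely $cn^2$ triple lines can have $\Theta(n^2)$ ordinary lines and no global group structure to propagate along. Second, in your alternative endgame, even granting that a positive fraction of $P$ lies on a bounded-degree curve carrying $\Omega(n^2)$ of the triples, pigeonholing over the $O(1)$ irreducible components only yields $\Omega(n^2)$ triples whose three points lie on some fixed \emph{triple} of components, not necessarily on a single one; Theorem \ref{thm:ESRSZcoltrip} as stated requires all three points on one irreducible curve, so you would need a tripartite variant (of the kind treated in \cite{ES13} and \cite{RSZ15}) together with an argument that three mutually special components assemble into a single cubic. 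Neither caveat changes the verdict: your diagnosis that the missing ingredient is an inverse theorem at density $cn^2$ rather than at the extremal threshold is exactly why even the deliberately weak ten-point form remains open.
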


An interesting application of Theorem \ref{thm:ESRSZcoltrip}, pointed out in \cite{ES13}, is to the problem of \emph{distinct directions} mentioned in Subsection \ref{subsec:ERapps}.
Theorem \ref{thm:ESRSS} allows us to generalize Corollary \ref{cor:directionsongraphs} to any algebraic curve.
The connection with triple lines is that if two pairs of points determine lines in the same direction, then these lines intersect the line at infinity in the same point.
Thus a point set on a curve $\cv$ that has few directions determines few such points at infinity, or conversely, adding these points at infinity gives a point set with many collinear triples.
An unbalanced form of Theorem \ref{thm:ESRSZcoltrip} then gives the following lower bound on the number of directions.
There is an exception when $\cv$ together with the line at infinity is a cubic curve, which means that $\cv$ is a conic.

\begin{corollary}
Let $\cv$ be an irreducible algebraic curve in $\C^2$ of degree $d$ and let $P\subset \cv$ be a finite set.
Then $P$ determines $\Omega_d(n^{4/3})$ distinct directions, 
unless $\cv$ is a conic.
\end{corollary}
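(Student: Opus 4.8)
The plan is to encode the directions determined by $P$ as collinear triples and then feed these into an unbalanced, three-curve version of Theorem~\ref{thm:ESRSZcoltrip}, obtained through the reduction to $\C^3$ described in Subsection~\ref{subsec:ESoncurves}. First I would set the degenerate case aside: if $\deg\cv=1$ then all pairs of points of $P$ span a single direction, so a line is excluded trivially; from now on assume $d\geq 2$. Passing to $\mathbb{P}^2$, write $\ell_\infty$ for the line at infinity, let $S(P)$ be the set of directions determined by $P$, put $s:=|S(P)|$, and let $T\subset\ell_\infty$ be the set of $s$ points at infinity corresponding to these directions. For every ordered pair of distinct points $p,q\in P$ the projective line $\overline{pq}$ meets $\ell_\infty$ in a single point, and that point lies in $T$; since $p,q$ are affine and the third point is at infinity, this produces $n(n-1)=\Omega(n^2)$ proper collinear triples lying in $P\times P\times T$.

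Next I would apply the Elekes-Szab\'o framework for curves with $\cv_1=\cv_2=\cv$, $\cv_3=\ell_\infty$, point sets $P_1=P_2=P$ and $P_3=T$, and with $G$ the fixed polynomial expressing collinearity of three points of $\C^2$. As in Subsection~\ref{subsec:ESoncurves}, a generic projection $\varphi\colon\C^2\to\C$ yields $\pi=\varphi\times\varphi\times\varphi\colon\C^6\to\C^3$ which sends the two-dimensional variety $Z(G)\cap(\cv\times\cv\times\ell_\infty)$ (two-dimensional unless $G$ vanishes on the whole product, which cannot happen for $d\geq 1$) to a two-dimensional variety $Z(F)\subset\C^3$, keeps $|\varphi(P)|=n$ and $|\varphi(T)|=s$, and carries our $\Omega(n^2)$ collinear triples to $\Omega(n^2)$ triples of $Z(F)\cap(\varphi(P)\times\varphi(P)\times\varphi(T))$. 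If $F$ satisfies condition~$(i)$ of Theorem~\ref{thm:ESRSZ}, the unbalanced bound~\eqref{eq:ESunbalanced} gives $|Z(F)\cap(\varphi(P)\times\varphi(P)\times\varphi(T))|=O_d(n^{4/3}s^{1/2}+ns^{1/2}+s)$; combining this with the lower bound $\Omega(n^2)$ and absorbing the lower-order terms (justified by $s\leq\binom n2$) yields $n^2=O_d(n^{4/3}s^{1/2})$, hence $|S(P)|=s=\Omega_d(n^{4/3})$, as desired.

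It remains to handle the case where $F$ satisfies condition~$(ii)$ of Theorem~\ref{thm:ESRSZ}, and this is where I expect the real work to lie, precisely as flagged in Subsection~\ref{subsec:ESoncurves}. Since $G$ is a fixed collinearity polynomial with no special structure of its own, condition~$(ii)$ for the projected $F$ should be transferable to the statement that the configuration $(\cv,\cv,\ell_\infty)$ is exceptional for collinear triples; by the analysis of \cite{ES13, RSZ15}, extended to reducible curves in the manner recalled in Subsection~\ref{subsec:collineartriples}, this means that $\cv\cup\ell_\infty$ is a cubic curve (possibly reducible, carrying a quasi-group law). Because $\ell_\infty$ is always a component of $\cv\cup\ell_\infty$ and $\deg(\cv\cup\ell_\infty)=d+1$, this forces $d=2$, i.e., $\cv$ is a conic. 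Thus, apart from lines (handled trivially), conics are the only exceptional curves, which proves the corollary.

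Two ingredients should require genuine effort. The first is setting up the unbalanced, three-curve form of Theorem~\ref{thm:ESRSZcoltrip} allowing one of the curves to be the fixed line $\ell_\infty$: a reducible-curve extension is needed, and one must count only triples with one point in each of $\cv,\cv,\ell_\infty$, since the naive count over $P\cup T$ is drowned by the $\binom s3$ collinear triples already contained in $\ell_\infty$. The second, and the main obstacle, is the back-translation of condition~$(ii)$ from the projected polynomial $F$ to the geometry of the triple $(\cv,\cv,\ell_\infty)$; this does not follow formally and relies on the specific structure of the collinearity relation worked out in \cite{ES13, RSZ15}.
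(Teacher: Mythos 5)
Your proposal follows the paper's argument exactly: the survey likewise adds the direction-points on the line at infinity to get $\Omega(n^2)$ proper collinear triples and then invokes an unbalanced, three-set form of Theorem~\ref{thm:ESRSZcoltrip} (whose proof in turn runs through the generic projection to $\C^3$ and Theorem~\ref{thm:ESRSZ}, as you reconstruct), with the exceptional case being that $\cv\cup\ell_\infty$ is a cubic, i.e.\ $\cv$ is a conic. You also correctly identify the two points the survey leaves implicit, namely the unbalanced three-curve statement and the back-translation of condition~$(ii)$ to the geometry of $(\cv,\cv,\ell_\infty)$.
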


\section{Other variants}\label{sec:variants}

\subsection{Longer one-dimensional products}
\label{subsec:longer}

One way to extend Theorems \ref{thm:ERRSS} and \ref{thm:ESRSZ} is to consider ``longer'' Cartesian products, i.e., products with more factors. 
The reference point for such bounds is the Schwartz-Zippel lemma,
which we now state in full generality.
This version was proved by Lang and Weil \cite[Lemma 1]{LW54}\footnote{This brings into question if ``Schwartz-Zippel'' is the right name, but it has become standard in combinatorics.}; see also Tao \cite{T12} for a proof sketch (both focus on finite fields, but their proofs work also over $\C$).
The bound in fact holds over any field, and can be modified to allow factors $A_i$ of different sizes.

\begin{theorem}\label{thm:ScZi}
Let $X\subset \C^D$ be a variety.
Then for finite sets $A_1,\ldots, A_D\subset \C$ of size $n$ we have
\[|X\cap (A_1\times \cdots \times A_D)| = O_{D,\deg(X)}(n^{\dim(X)}) .\]
\end{theorem}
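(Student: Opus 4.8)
The plan is to prove Theorem~\ref{thm:ScZi} by induction on $D$, the number of factors in the Cartesian product. The base case $D=1$ is essentially trivial: a variety $X\subset\C$ is either all of $\C$ (so $\dim X = 1$ and the bound $O(n)$ is immediate) or a finite set of size $O(\deg X)$ (so $\dim X = 0$ and $|X\cap A_1| = O_{\deg X}(1) = O_{\deg X}(n^0)$). The real work is the inductive step, where the natural device is to slice $X$ by hyperplanes of the form $x_D = a$ for $a\in A_D$ and control how the dimension drops.

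For the inductive step, assume the result for $D-1$ factors. Write points of $\C^D$ as $(x',x_D)$ with $x'\in\C^{D-1}$. For each value $a\in A_D$, consider the fiber $X_a = \{x'\in\C^{D-1} : (x',a)\in X\}$, which is a variety in $\C^{D-1}$ of degree $O_{D,\deg X}(1)$ (this uses a standard fact that intersecting with a hyperplane and projecting does not increase degree beyond a bound depending only on $D$ and $\deg X$). The key dichotomy is: either $X$ projects dominantly onto the $x_D$-axis, in which case the generic fiber $X_a$ has dimension $\dim X - 1$, and only $O_{D,\deg X}(1)$ special values of $a$ give a fiber of dimension exactly $\dim X$ (these lie in a proper subvariety of $\C$); or $X$ does not project dominantly, in which case $X$ is contained in $O_{D,\deg X}(1)$ fibers $\{x_D = c\}$, each of which is a variety of dimension at most $\dim X$ in the remaining $D-1$ coordinates. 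In the first case, summing over $a\in A_D$:
\[
|X\cap(A_1\times\cdots\times A_D)| = \sum_{a\in A_D}|X_a\cap(A_1\times\cdots\times A_{D-1})| = O_{D,\deg X}(1)\cdot O(n^{\dim X}) + n\cdot O(n^{\dim X - 1}),
\]
where the first term bounds the contribution of the finitely many special fibers (using the induction hypothesis with dimension $\le\dim X$) and the second bounds the at most $n$ generic fibers (using the induction hypothesis with dimension $\dim X - 1$); both terms are $O_{D,\deg X}(n^{\dim X})$. In the second (non-dominant) case, $X$ meets at most $O_{D,\deg X}(1)$ of the values in $A_D$, and each contributes $O(n^{\dim X})$ by induction, which again gives the claimed bound.

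The main obstacle is not the combinatorial bookkeeping but the algebraic geometry input: one needs a clean, degree-controlled version of the statements that (a) fibers of a projection have generic dimension $\dim X - 1$ with only boundedly many (in terms of $D$ and $\deg X$) exceptional fibers of larger dimension, and (b) the degrees of all these fibers stay bounded in terms of $D$ and $\deg X$. Making these uniform in the degree is exactly what distinguishes this from a soft dimension-counting argument; it is cleanest to either invoke the fiber-dimension theorem together with an effective B\'ezout-type degree bound, or to follow the Lang--Weil argument directly, where these estimates are built in. A minor additional point is that $X$ need not be irreducible, but decomposing into irreducible components (again $O_{\deg X}(1)$ of them, of degree $O_{\deg X}(1)$) and summing reduces to the irreducible case, so this costs nothing. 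Once these ingredients are in place, the induction runs mechanically and yields the stated bound $O_{D,\deg(X)}(n^{\dim(X)})$.
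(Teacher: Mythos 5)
The paper does not prove Theorem~\ref{thm:ScZi} at all: it is quoted as background and attributed to Lang and Weil \cite[Lemma 1]{LW54}, with Tao's blog post \cite{T12} cited for a proof sketch. Your induction on the number of coordinate factors, slicing by the hyperplanes $x_D=a$, is exactly the standard argument (and the one in the cited sketch), and it is correct modulo the effective algebro-geometric inputs you explicitly flag (a B\'ezout-type bound on the number and degrees of irreducible components, and degree control of hyperplane sections). One small simplification you could make: once you reduce to $X$ irreducible, the dichotomy becomes cleaner than you state it. If the projection to the $x_D$-axis is non-constant on $X$, then \emph{every} fiber $X_a=X\cap\{x_D=a\}$ is a proper closed subvariety of the irreducible $X$ and hence has dimension at most $\dim X-1$ --- there are no ``special'' fibers of dimension $\dim X$ to account for, so the first term in your displayed sum is vacuous. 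Fibers of full dimension $\dim X$ arise only from components contained in a hyperplane $x_D=c$, which is precisely your non-dominant case, and there the component meets at most one value of $A_D$. With that observation the two cases give $n\cdot O(n^{\dim X-1})$ and $1\cdot O(n^{\dim X})$ respectively, and summing over the $O_{D,\deg X}(1)$ components finishes the proof.
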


Theorem \ref{thm:ESRSZ} told us that in the case with $D=3$ and $\dim(X) = 2$, this bound can be improved on, unless the defining polynomial of $X$ is special.
Analogously, one would expect that the bound in Theorem \ref{thm:ScZi} can be improved on, unless the variety is of some special type.
The only other case that has so far been studied is $D=4$ and $\dim(X)=3$, for which Raz, Sharir, and De Zeeuw \cite{RSZ16} proved the following.

\begin{theorem}[Raz-Sharir-De Zeeuw]\label{thm:ES4D}
Let $F\in \C[x,y,s,t]$ be an irreducible polynomial of degree $d$ with 
with each of $F_x,F_y,F_s,F_t$ not identically zero. 
Then one of the following holds.\\
$(i)$ For all $A,B,C,D\subset \C$ of size $n$ we have
$$|Z(F) \cap (A\times B\times C\times D)|=O_d(n^{8/3}).$$
$(ii)$ There exists a one-dimensional subvariety $Z_0\subset Z(F)$, such that every $v\in Z(F)\backslash Z_0$ has an open neighborhood $D_1\times D_2\times D_3\times D_4$ and analytic functions 
$\varphi_i: D_i\to \C$,
such that for every $(x,y,s,t)\in D_1\times D_2\times D_3\times D_4$ we have
$$(x,y,s,t)\in Z(F)~~~\text{if and only if}~~~\varphi_1(x)+\varphi_2(y)+\varphi_3(s)+\varphi_4(t) = 0.$$
\end{theorem}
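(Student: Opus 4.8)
The plan is to run the proof of Theorem \ref{thm:ESRSZ} from Subsection \ref{subsec:ESproof}, but with a pleasant simplification: since $F$ is now a polynomial in four variables, $Z(F)$ is three-dimensional, so Schwartz--Zippel (Theorem \ref{thm:ScZi}) only gives the trivial bound $O_d(n^3)$, and the target $O_d(n^{8/3})$ lies \emph{below} it. Consequently one can afford to feed the problem directly into the incidence bound of Theorem \ref{thm:SZ}, without first passing through a Cauchy--Schwarz / fiber-product step, and hence without any quantifier elimination. Split the four coordinates of $\C^4$ into a ``left'' pair $(x,y)$ and a ``right'' pair $(s,t)$; by the symmetry of the hypothesis this is harmless. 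For $q=(s,t)\in C\times D$, let $\gamma_q\subset\C^2_{(x,y)}$ be the plane curve $\{(x,y):F(x,y,s,t)=0\}$; this has degree at most $d$ for all but finitely many $q$, the exceptions (for which $F(\cdot,\cdot,q)\equiv 0$) forming a negligible set by the irreducibility of $F$ together with $F_x\not\equiv 0$. Setting $\pts=A\times B$ and $\cvs=\{\gamma_q:q\in C\times D\}$, one has, up to negligible terms and up to the multiplicity $m(\gamma)$ with which a given curve $\gamma$ is repeated among the $\gamma_q$,
\[\bigl|Z(F)\cap(A\times B\times C\times D)\bigr| \;=\;\sum_{q\in C\times D}\bigl|\gamma_q\cap\pts\bigr| \;\le\;\Bigl(\max_\gamma m(\gamma)\Bigr)\cdot\bigl|I(\pts,\cvs_{\mathrm{dist}})\bigr|,\]
where $\cvs_{\mathrm{dist}}$ is the set of distinct curves.

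First I would show that, unless $F$ is special, we may delete negligible subsets of $A\times B$ and of the curves so that (a) the remaining curves have bounded multiplicity, and (b) the remaining configuration satisfies the degrees-of-freedom hypothesis of Theorem \ref{thm:SZ}. For (b): two points $p_1=(x_1,y_1)$ and $p_2=(x_2,y_2)$ lie on a common $\gamma_q$ precisely when $q$ lies on both plane curves $\delta_{p_i}:=\{(s,t):F(x_i,y_i,s,t)=0\}\subset\C^2_{(s,t)}$, and by B\'ezout this forces at most $d^2$ such $q$ unless $\delta_{p_1}$ and $\delta_{p_2}$ share a component; likewise, $\gamma_{q_1}=\gamma_{q_2}$ means the polynomials $F(\cdot,\cdot,q_1)$ and $F(\cdot,\cdot,q_2)$ have the same zero set, which is a ``dual'' coincidence. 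With (a) and (b) in hand, Theorem \ref{thm:SZ} applied with $|\pts|=n^2$ and $|\cvs_{\mathrm{dist}}|\le n^2$ gives $|I(\pts,\cvs_{\mathrm{dist}})|=O_d(n^{8/3})$, which is alternative $(i)$. The routine low-dimensional degeneracies (non-reduced or reducible members of the families $\{\gamma_q\}$ and $\{\delta_p\}$, ``fibral'' components of $Z(F)$, and the like) are disposed of exactly as in the proof of Theorem \ref{thm:ESRSZ}.

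The heart of the matter --- and, as for Theorem \ref{thm:ESRSZ}, the main obstacle --- is the converse: if no such negligible deletion restores conditions (a) and (b), then $F$ must satisfy condition $(ii)$. The phenomenon to keep in mind is the model $F=\varphi_1(x)+\varphi_2(y)+\varphi_3(s)+\varphi_4(t)$, for which the curve $\delta_p$ depends on $p=(x,y)$ only through the value $\varphi_1(x)+\varphi_2(y)$ and the curve $\gamma_q$ depends on $q=(s,t)$ only through $\varphi_3(s)+\varphi_4(t)$, so that \emph{both} families collapse onto one-parameter families of level curves; the converse requires reconstructing the $\varphi_i$ from this collapse. One expects to proceed in two stages. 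First, treating $(x,y)$ and $(s,t)$ as two blocks and mimicking the coefficient-extraction arguments of \cite{RSS15a,RSZ15} (prying the relevant function out of whichever coefficient of $F$, arranged as a polynomial in one block of variables, carries the dependence on the other block), one extracts analytic functions $\Phi(x,y)$ and $\Psi(s,t)$ with $Z(F)=\{\Phi(x,y)+\Psi(s,t)=0\}$ locally. Second --- and this is the genuinely new difficulty compared with Theorem \ref{thm:ESRSZ} --- one must split each block, i.e.\ show $\Phi(x,y)=\varphi_1(x)+\varphi_2(y)$ and $\Psi(s,t)=\varphi_3(s)+\varphi_4(t)$; this is where the precise, two-sided nature of the degeneracy is used, and where one would appeal to the derivative criterion of Lemma \ref{lem:derivative} (equivalently, to Theorem \ref{thm:ESRSZ} applied to $\Phi(x,y)-w$ and to $\Psi(s,t)-w$) to produce the univariate functions. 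Combining the two stages yields condition $(ii)$, and hence the dichotomy of the theorem.
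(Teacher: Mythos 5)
Your setup is exactly the one the survey sketches for this theorem: pair the variables as $(x,y)$ versus $(s,t)$, define the curves $\gamma_{(s,t)}=\{(x,y):F(x,y,s,t)=0\}$, observe that the intersection count is an incidence count between $A\times B$ and this family, and feed it to Theorem \ref{thm:SZ}, thereby bypassing the Cauchy--Schwarz/fiber-product step and quantifier elimination. The duality and B\'ezout analysis of the degrees-of-freedom condition is also the intended one.

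The gap is the sentence ``if no such negligible deletion restores conditions (a) and (b), then $F$ must satisfy condition $(ii)$'': this dichotomy is false for the fixed pairing $(x,y)\,|\,(s,t)$, and your second stage cannot repair it. Degeneracy of this pairing only says that the curves $\gamma_{(s,t)}$ collapse onto a one-parameter family, i.e.\ that locally $Z(F)=\{\Phi(x,y)+\Psi(s,t)=0\}$; it carries no information about the internal structure of $\Phi$ or $\Psi$, so nothing forces $\Phi=\varphi_1(x)+\varphi_2(y)$. Concretely, take $F=(x^2+xy+y^2)-(s^2+st+t^2)$: it is irreducible with all four partials nonzero, every $\gamma_{(s,t)}$ is a level conic of $x^2+xy+y^2$, so curve multiplicities and the number of curves through two points on a common level conic are unbounded and cannot be made $O_d(1)$ by deleting a negligible set; yet condition $(ii)$ fails, since on $Z(F)$ one computes $t_x/t_y=(2x+y)/(x+2y)$, whose logarithm is not additively separable (Lemma \ref{lem:derivative}). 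For this $F$ the theorem asserts alternative $(i)$, and your argument has no branch that proves it: the honest count is $\sum_\gamma m(\gamma)\,|\gamma\cap(A\times B)|$, which must be handled by Cauchy--Schwarz in the level parameter together with Elekes--R\'onyai quadruple bounds for $\Phi$ and $\Psi$ separately, whereas your inequality with the factor $\max_\gamma m(\gamma)$ can lose a factor of order $n$. Worse, that rescue itself branches further: for $F=(x+y)^2+(s+t)^3$ both blocks do split ($\Phi=\psi_1(x+y)$, $\Psi=\psi_2(s+t)$) but with incompatible outer functions, so $(ii)$ still fails, the pairing $(x,y)\,|\,(s,t)$ still degenerates, and the quadruple bounds for $\Phi,\Psi$ are now useless because both are additive. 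What is missing is a genuine extra layer of case analysis --- for instance passing to the other pairing $(x,s)\,|\,(y,t)$, which for both examples above yields curves of bounded multiplicity satisfying the degrees-of-freedom condition (so ``by the symmetry of the hypothesis this is harmless'' is precisely backwards: the hypothesis is symmetric in the four variables, but degeneracy is a property of the chosen pairing), or a weighted Elekes--Szab\'o analysis of the induced planar relation between the two level parameters. Your appeal to Lemma \ref{lem:derivative} ``to produce the univariate functions'' is circular here: the derivative criterion can equally well certify that $\Phi$ does \emph{not} split, and then you still owe a proof of alternative $(i)$.
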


The proof of Theorem \ref{thm:ES4D} mostly uses the same techniques as that of Theorem \ref{thm:ESRSZ}, and the setup turns out to be simpler.
The reason is that (because four is an even number) we can define curves by 
\begin{align}\label{eq:curvesfrom4var}
 C_{cd} = \{(x,y): F(x,y,c,d)=0\},
 \end{align}
which avoids quantifier elimination and thus many technical complications.
The theorem is in fact closely related to the incidence bound in Theorem \ref{thm:SZ}: In most applications of that bound, the curves are defined as in \eqref{eq:curvesfrom4var}, for some given polynomial $F$ 
(for instance, the proof of Theorem \ref{thm:ERRSS} in Subsection \ref{subsec:ERproof} uses $F=f(s,x) - f(t,y)$).
Theorem \ref{thm:ES4D} thus gives the same bound as Theorem \ref{thm:SZ}, but it replaces the combinatorial condition (two points being contained in a bounded number of curves) by an algebraic condition (that $F$ is not of the form in $(ii)$).
Much like condition $(ii)$ in Theorem \ref{thm:ESRSZ}, condition $(ii)$ in Theorem \ref{thm:ES4D} can often be checked using derivatives.

A consequence of Theorem \ref{thm:ES4D} is an expansion bound for three-variable polynomials $f\in \C[x,y,z]$ on sets $A,B,C\subset \C$ with $|A|=|B|=|C|=n$, namely
\[ |f(A\times B\times C)| = \Omega(n^{3/2}),\]
unless $f$ is in a local sense of the form $\psi(\varphi_1(x)+\varphi_2(y)+\varphi_3(s))$ with analytic $\psi, \varphi_1,\varphi_2,\varphi_3$.
A weaker bound for this question was proved in \cite{SSZ13}, with the more precise condition that $f$ is not of the form $g(h(x)+k(y)+l(z))$ or $g(h(x)\cdot k(y)\cdot l(z))$ with $g,h,k,l$ polynomials.
It should be possible to use techniques similar to those in \cite{RSS15a} to prove the stronger bound with the precise condition.

We make the following conjecture for the general case.

\begin{conjecture}
There is a constant $c>0$ such that one of the following holds for any irreducible $F\in \C[x_1,\ldots,x_D]$ of degree $d$ with each of $F_{x_1},\ldots,F_{x_D}$ not identically zero. \\
$(i)$ For all $A_1,\ldots,A_D\subset \C$ of size $n$ we have
$$|Z(F) \cap (A_1\times\cdots\times A_D)|=O_d(n^{D-1-c}).$$
$(ii)$ In a local sense we have
$$(x_1,\ldots,x_D)\in Z(F)~~~\text{if and only if}~~~\sum_{i=1}^D\varphi_i(x_i) = 0.$$
\end{conjecture}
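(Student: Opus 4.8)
The plan is to prove the conjecture by induction on $D$, taking the cases $D=3$ and $D=4$ (Theorems~\ref{thm:ESRSZ} and~\ref{thm:ES4D}) as the base and reducing a $D$-variable instance to $(D-1)$-variable ones by slicing a coordinate. For $a\in\C$ write $F^{(a)}(x_1,\dots,x_{D-1}):=F(x_1,\dots,x_{D-1},a)$, so that
\[|Z(F)\cap(A_1\times\cdots\times A_D)|=\sum_{a\in A_D}\bigl|Z(F^{(a)})\cap(A_1\times\cdots\times A_{D-1})\bigr|.\]
For all but finitely many $a$ the slice $F^{(a)}$ is a nonzero polynomial with $\partial F^{(a)}/\partial x_i\not\equiv0$ for $i=1,\dots,D-1$, and $Z(F^{(a)})$ has dimension $D-2$; hence the remaining slices contribute only $O_d(n^{D-2})$ in total by the Schwartz--Zippel lemma (Theorem~\ref{thm:ScZi}), which is negligible since $c<1$. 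After passing, if necessary, to a finite cover of the $x_D$-line over which the generic slice of $Z(F)$ becomes geometrically irreducible --- this replaces $A_D$ by a set of size $O_d(n)$ --- a Bertini-type argument makes all but finitely many of the slices $F^{(a)}$ irreducible as well. For each good $a$ the inductive hypothesis then applies to $F^{(a)}$: either it yields $|Z(F^{(a)})\cap(A_1\times\cdots\times A_{D-1})|=O_d(n^{D-2-c})$, and summing over the at most $O_d(n)$ good slices gives the desired $O_d(n^{D-1-c})$ \emph{with the same $c$} (so any admissible exponent at $D=3$, e.g.\ $c=1/6$ from the $n^{11/6}$ bound of Theorem~\ref{thm:ESRSZ}, works uniformly in $D$); or else $F^{(a)}$ has the local additive form $(ii)$ in $D-1$ variables. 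If the latter occurs for only finitely many $a$ we are done as above, so everything reduces to a \emph{transfer statement}: if $F^{(a)}$ has form $(ii)$ for an infinite, hence Zariski-dense, set of $a$, then $F$ has form $(ii)$ in $D$ variables.

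The transfer statement is the main obstacle, and it is where essentially all the genuine difficulty of Theorem~\ref{thm:ESRSZ} --- the passage from failure of the degrees-of-freedom condition to a group-related special form --- reappears. One route is to express form $(ii)$ as a finite system of differential-polynomial identities satisfied by a local solved form $x_D=f(x_1,\dots,x_{D-1})$ of $Z(F)$, generalizing the single identity $\partial^2(\log|f_x/f_y|)/\partial x\,\partial y\equiv0$ of Lemma~\ref{lem:derivative} to the $(D-1)$-fold decomposition $f=\psi(\varphi_1(x_1)+\cdots+\varphi_{D-1}(x_{D-1}))$. Clearing denominators, each identity becomes a polynomial relation among $F$ and its partials; the analogous relations for each special slice $F^{(a)}$ hold by hypothesis, and since they hold on a Zariski-dense set of values $a$ they promote to relations for $F$. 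The content --- and the difficulty --- is to check that the relations one gets this way are exactly those characterizing form $(ii)$ for $F$, rather than some strictly weaker condition; equivalently, one must rule out an irreducible $F$ that is not of form $(ii)$ but all of whose $x_D$-slices are. A more geometric alternative is to regard $Z(F)$ as fibred over the $x_D$-line with each fibre carrying, by hypothesis, a local additive (one-dimensional group-type) structure, and to glue these fibrewise structures into a single structure on $Z(F)$ using the rigidity of commutative group laws, in the spirit of Elekes--Szab\'o~\cite{ES12} and Wang~\cite{W15}; the delicate point is the compatibility of the structures across nearby fibres.

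It is worth noting why one probably cannot bypass the induction and argue directly, in the manner of Raz--Sharir--De~Zeeuw~\cite{RSZ16}, by setting $\pts=A_1\times A_2$ and $\cvs=\{\,C_{\mathbf c}:\mathbf c\in A_3\times\cdots\times A_D\,\}$ with $C_{\mathbf c}=\{(x_1,x_2):F(x_1,x_2,\mathbf c)=0\}$, so that the intersection count becomes an incidence count $|I(\pts,\cvs)|$. For $D=4$ this is exactly the setting of Theorem~\ref{thm:SZ}. For $D\ge5$, however, $\cvs$ has about $n^{D-2}\gg n^2=|\pts|$ members, and since two points impose only two conditions on the $D-2$ parameters $\mathbf c$, a generic pair of points of $\pts$ lies on $\Theta(n^{D-4})$ of the curves; the hypothesis of Theorem~\ref{thm:SZ} fails badly, and even the strongest Zarankiewicz-type substitute loses all power saving once $\deg F\ge2$. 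This is precisely why the slicing induction seems to be the right framework: it confines the genuinely algebraic difficulty to the transfer statement and to the low-dimensional base cases, where it is already understood.
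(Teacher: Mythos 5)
This statement is posed in the paper as an open conjecture; there is no proof to compare against, so I am judging your proposal on its own terms. It contains a fatal gap: the ``transfer statement'' on which the whole induction rests is not merely difficult, it is \emph{false}. Take $D=4$ and $F=x_1+x_2x_4-x_3$. This is irreducible with all four partials not identically zero, and for every $a\neq 0$ the slice $F^{(a)}=x_1+ax_2-x_3$ is globally of the additive form $(ii)$ in three variables (with $\varphi_2(x_2)=ax_2$). So all but one slice is special, and your induction would conclude that $F$ has form $(ii)$ in four variables. But it does not: solving for $x_3=f(x_1,x_2,x_4)=x_1+x_2x_4$, the ratio $f_{x_1}/f_{x_2}=1/x_4$ is not of the shape $\varphi_1'(x_1)/\varphi_2'(x_2)$, so the derivative test (the three-variable analogue of Lemma~\ref{lem:derivative}) rules out $\psi(\varphi_1(x_1)+\varphi_2(x_2)+\varphi_4(x_4))$. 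Indeed Theorem~\ref{thm:ES4D} applies to this $F$ and gives conclusion $(i)$. The underlying reason is that form $(ii)$ requires the functions $\varphi_1,\ldots,\varphi_{D-1}$ to be \emph{independent of} $x_D$, whereas slicing only ever sees the special form of each fibre with its own $a$-dependent coordinates; no amount of Zariski-density of special slices can recover that independence.

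Moreover, the counterexample shows the induction fails not only to prove the dichotomy but to prove any bound at all in this case: when a slice is special, the inductive hypothesis gives you nothing better than the trivial Schwartz--Zippel bound $O_d(n^{D-2})$ for that slice, and summing over $n$ special slices yields only $O_d(n^{D-1})$, with no power saving --- yet the conjecture demands $(i)$ for $F=x_1+x_2x_4-x_3$. The saving for such $F$ comes from the fact that the \emph{same} sets $A_1,\ldots,A_{D-1}$ must serve all $n$ slices simultaneously, which is exactly the cross-slice interaction that a fibrewise argument discards. This is why the proofs of Theorems~\ref{thm:ESRSZ} and~\ref{thm:ES4D} (and the approach one would expect in \cite{RSZ16}-style generalizations) work globally via fiber products, Cauchy--Schwarz over one coordinate, and an incidence bound for curves indexed by the remaining coordinates, rather than by slicing. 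Your closing paragraph correctly identifies that the naive incidence setup degenerates for $D\geq 5$ because pairs of points lie on $\Theta(n^{D-4})$ curves, but the remedy is to apply Cauchy--Schwarz over $D-4$ of the coordinates (or otherwise regroup coordinates into two blocks) before setting up the incidence problem --- not to retreat to a slicing induction, which the example above shows cannot establish the dichotomy.
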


A largely unexplored question is how the bound of Theorem \ref{thm:SZ} can be improved for a variety $X\subset \C^D$ with $\dim(X)<D-1$.
When $\dim(X) = 1$, the bound $O(n)$ is tight: We can place $n$ points on $X$ and project in the coordinate directions to get $A_i$ such that $|X\cap (A_1\times\cdots \times A_D)| = n$.
Thus the first open case is $D=4$, $\dim(X) = 2$.

\begin{problem}
Determine for which two-dimensional varieties $X\subset \C^4$ the Schwartz-Zippel-type bound $|X\cap (A_1\times A_2\times A_3 \times A_4)| = O_{\deg(X)}(n^2)$ is tight.
\end{problem}

\subsection{Two-dimensional products}

A different way of extending Theorems \ref{thm:ERRSS} and \ref{thm:ESRSZ} would be to consider Cartesian products of ``two-dimensional'' sets instead of ``one-dimensional'' sets, i.e., finite subsets of $\C^2$ instead of finite subsets of $\C$.
For instance, the two-dimensional analogue of the Elekes-R\'onyai problem would be: Given a map $\mathcal{F}:\C^2\times \C^2\to \C^2$, defined by $\mathcal{F}(x,y,s,t) = (F_1(x,y,s,t),F_2(x,y,s,t))$  for polynomials $F_1,F_2\in\C[x,y,s,t]$, 
and given finite sets $P,Q\subset \C^2$, can we obtain a non-trivial lower bound on $|\mathcal{F}(P\times Q)|$?

The study of such questions was initiated by Nassajian Mojarrad et al. \cite{NPVZ15}.
Even analogues of the Schwartz-Zippel lemma become more complicated: The easiest case would be a bound of the form $|X\cap (P\times Q)|$ for a variety $X\subset \C^4$ and finite sets $P,Q\subset \C^2$, but there are varieties for which no non-trivial bound holds.
Let us call a polynomial $F\in \C[x,y,s,t]$ \emph{Cartesian} if it can be written as
\begin{align}\label{eq:cartesian}
F(x,y,s,t) = G(x,y)H(x,y,s,t)+ K(s,t)L(x,y,s,t),
\end{align}
with $G\in \C[x,y]\backslash\C$, $K\in \C[s,t]\backslash\C$, and $H,L\in \C[x,y,s,t] $.
We say that a variety $X\subset \C^4$ is \emph{Cartesian} if every polynomial vanishing on $X$ is Cartesian with the same $G,K$.
For a Cartesian variety $X$, we can have $|X\cap (P\times Q)| = |P||Q|$, since we can take $P\subset Z(G)$ and $Q\subset Z(K)$ to get $P\times Q\subset X$.

It is proved in \cite{NPVZ15} that if $X$ is not Cartesian, then one can obtain non-trivial upper bounds on $|X\cap (P\times Q)|$.
As a consequence, we obtain the following lower bound on the number of distinct values of a polynomial map $\mathcal{F}=(F_1,F_2):\C^2\times\C^2\to \C^2$.

\begin{theorem}[Nassajian Mojarrad et al.]\label{thm:NPVZ}
Let $F_1,F_2\in \C[x,y,s,t]$ be polynomials of degree $d$ and $\mathcal{F}=(F_1,F_2)$.
Then for $P,Q\subset \C^2$ with $|P| = |Q|=n$ we have 
\[|\mathcal{F}(P\times Q)| = \Omega_d(n),\]
unless $F_1 = \varphi_1\circ \psi$ and $F_2 = \varphi_2\circ\psi$ for a nonlinear polynomial $\psi$, or $F_1$ and $F_2$ are both Cartesian with the same $G,K$.
\end{theorem}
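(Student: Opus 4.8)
The plan is to deduce Theorem~\ref{thm:NPVZ} from the main technical result of \cite{NPVZ15}: for an irreducible non-Cartesian variety $X\subset\C^4$ and finite $P,Q\subset\C^2$ one has a bound of the form $|X\cap(P\times Q)|=o(|P||Q|)$. Suppose $|\mathcal{F}(P\times Q)|=\varepsilon n$ with $\varepsilon$ small in terms of $d$; the aim is to force $\mathcal{F}$ into one of the two stated forms. A double count gives $\sum_{p\in P}|\mathcal{F}(p,Q)|\le n\cdot|\mathcal{F}(P\times Q)|=\varepsilon n^2$, so all but an $O(\sqrt{\varepsilon})$-fraction of $p\in P$ have $|\mathcal{F}(p,Q)|\le\sqrt{\varepsilon}\,n$; call these the \emph{cheap} columns.

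For a cheap column $p$ with $\mathcal{F}(p,\cdot)\colon\C^2\to\C^2$ dominant, outside the preimage of the locus $B_p\subset\C^2$ over which the fibres of $\mathcal{F}(p,\cdot)$ fail to be finite, the map is at most $d^2$-to-one by B\'ezout, so $|\mathcal{F}(p,Q)|\ge(n-|Q\cap E_p|)/d^2$ where $E_p:=\overline{\mathcal{F}(p,\cdot)^{-1}(B_p)}$ is a curve of degree $O_d(1)$. Cheapness therefore forces $|Q\cap E_p|\ge n/2$. Hence $\Omega(n)$ cheap dominant columns each pin $\Omega(n)$ points of $Q$ onto a bounded-degree curve, and since the $E_p$ form an algebraic family, the variety $\mathcal{E}\subset\C^4$ swept out by them satisfies $|\mathcal{E}\cap(P\times Q)|=\Omega(n^2)$; by the technical theorem some irreducible component of $\mathcal{E}$ is then Cartesian, i.e.\ cut out by a polynomial of the shape \eqref{eq:cartesian}. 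The complementary case, in which no slice $\mathcal{F}(p,\cdot)$ is dominant --- so $(F_1,F_2)$ is algebraically dependent on every slice $\{p\}\times\C^2$ --- is handled separately by parametrizing the image curve, which leads to the form $F_1=\varphi_1\circ\psi$, $F_2=\varphi_2\circ\psi$; the ``nonlinear $\psi$'' clause just records that the linear case can be dispatched directly.

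It remains to unwind the Cartesian structure of $\mathcal{E}$ --- combined with the symmetric information obtained by running the same argument on the rows $\mathcal{F}(\cdot,q)$ --- into a statement about $F_1$ and $F_2$. A component of $\mathcal{E}$ coming from the $G(x,y)$-part of \eqref{eq:cartesian} is a locus of first-arguments $p$ along which the fibres of $\mathcal{F}(p,\cdot)$ degenerate uniformly; matching this against the coefficients of $F_1$ and $F_2$ should reveal either a common inner polynomial $\psi$ (the first exception) or a pair $G(x,y),K(s,t)$ with respect to which both $F_1$ and $F_2$ are Cartesian (the second exception). One also has to treat the side case where $P$ or $Q$ itself concentrates on a single bounded-degree curve, which pushes the problem into the harder ``one set on a curve, the other arbitrary'' regime, and argue that for the curves arising there one still gets $\Omega(n)$ or an exceptional form.

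The main obstacle is precisely this final unwinding: extracting the concrete algebraic shapes of $F_1,F_2$ from the abstract conclusion that $\mathcal{E}$ has a Cartesian component, in the spirit of the coefficient-by-coefficient ``prying out'' of $g,h,k$ carried out in \cite{RSS15a}. A comparable difficulty lies inside the technical theorem itself, whose proof must reduce $|X\cap(P\times Q)|$ for non-Cartesian $X\subset\C^4$ to a point--curve incidence bound in the plane --- by slicing $X$ into curves and applying a generic projection so that Theorem~\ref{thm:SZ} (or the underlying Pach--Sharir bound) becomes available --- while using the non-Cartesian hypothesis to discard, down to a negligible subset, the high-multiplicity shared-component curves that would otherwise violate the degrees-of-freedom condition.
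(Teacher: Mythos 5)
The survey does not actually prove Theorem~\ref{thm:NPVZ}; it records it as a corollary of the Schwartz--Zippel-type bounds of \cite{NPVZ15}, and the intended derivation runs through the \emph{fibers} of $\mathcal{F}$ rather than through its columns. One writes $n^2=\sum_{v}|\mathcal{F}^{-1}(v)\cap(P\times Q)|$, the sum over $v\in\mathcal{F}(P\times Q)$, and applies the bound for non-degenerate varieties to each fiber $\mathcal{F}^{-1}(v)=Z(F_1-v_1)\cap Z(F_2-v_2)$: if every fiber meets $P\times Q$ in $O_d(n)$ points, then $|\mathcal{F}(P\times Q)|=\Omega_d(n)$. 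The exceptional structures then fall out directly from the fibers themselves: a two-dimensional fiber can only carry $\gg n$ points of $P\times Q$ by containing a product $C_1\times C_2$ of curves, and ``$F_1-v_1$ and $F_2-v_2$ both vanish on $C_1\times C_2$'' is essentially the definition of both being Cartesian with the same $G,K$; while the fibers jumping in dimension over a one-parameter family of $v$ forces $\mathcal{F}$ to be non-dominant, whence (by the classical result on algebraically dependent polynomials) $F_1=\varphi_1\circ\psi$ and $F_2=\varphi_2\circ\psi$. Your route abandons this direct translation, and that is where the genuine gaps sit.

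First, the step you yourself flag as ``the main obstacle'' is not a finishing technicality but the entire content of the theorem: knowing that the auxiliary variety $\mathcal{E}$ swept out by the non-finiteness loci $E_p$ has a Cartesian component is a statement about where the slice maps $\mathcal{F}(p,\cdot)$ degenerate, and you give no mechanism for converting that into the stated normal forms for $F_1$ and $F_2$; by contrast, degeneracy of the fibers $\mathcal{F}^{-1}(v)$ \emph{is} the stated normal form, with no unwinding needed. Second, your complementary case is handled incorrectly: non-dominance of every slice $\mathcal{F}(p,\cdot)$ does not make $F_1,F_2$ globally algebraically dependent, because the image curve of the slice may vary with $p$. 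For $F_1=s$, $F_2=xs$ every slice has one-dimensional image, yet $F_1$ and $F_2$ are algebraically independent and admit no common composite $\psi$; the relevant exception there is the Cartesian one (with $G=x$, $K=s$), which your case analysis can never reach, since it sends all non-dominant configurations to the $\varphi_i\circ\psi$ branch. So as written the proposal neither closes the dominant case nor correctly routes the non-dominant one; restructuring the count around the fibers of $\mathcal{F}$ repairs both at once.
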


This theorem provides a starting point for the study of Elekes-R\'onyai problems for two-dimensional polynomial maps.
To compare with the one-dimensional case, for any $f\in \C[x,y]\backslash \C$ and $A,B\subset \C$ of size $n$, we can deduce $|f(A\times B)|=\Omega(n)$ from the Schwartz-Zippel lemma (Theorem \ref{thm:ScZi}),
and Theorem \ref{thm:ERRSS} improves on that bound for polynomials that are not additive or multiplicative.
The question is thus for which polynomial maps (other than those already excluded in Theorem \ref{thm:NPVZ}) the bound $\Omega(n)$ can be improved on.

The bound of Theorem \ref{thm:NPVZ} is tight for certain polynomial maps.
For instance, if we take the vector addition map $\mathcal{F} = (x+s,y+t)$ and we take $P$ to be any arithmetic progression on a line, 
then we have $|\mathcal{F}(P\times P)| = O(n)$; note that $x+s$ and $y+t$ are both Cartesian, but not with the same $G, K$.
We can do something similar for any map of the form $\mathcal{F} = (f_1(x,s), f_2(y,t))$, where $f_1,f_2$ are additive or multiplicative in the sense of Theorem \ref{thm:ERRSS}.
Just like for the Elekes-R\'onyai problem, composing these maps with other polynomials gives more exceptions: If we write $(x,y)\oplus (s,t) = (x+s,y+t)$, and we set $\mathcal{F} = \psi(\varphi_1(x,y)\oplus\varphi_2(s,t))$ with reasonable maps $\psi,\varphi_1,\varphi_2:\C^2\to \C^2$, then again the image of $\mathcal{F}$ can have linear size.

\begin{problem}\label{prob:polymap}
Determine for which polynomial (or even rational) maps $\mathcal{F}:\C^2\times \C^2\to \C^2$
%written as $\mathcal{F}(x,y,s,t) = (F_1(x,y,s,t),F_2(x,y,s,t))$ for polynomials $F_1,F_2\in\C[x,y,s,t]$, 
there exists an $\alpha>0$ such that
\[|\mathcal{F}(P\times Q)| = \Omega_d(n^{1+\alpha})\]
for all $P,Q\subset \C^2$ of size $n$, perhaps with further restrictions on $P$ and $Q$.
\end{problem}

Superlinear bounds like in Problem \ref{prob:polymap} are known for only a few functions (which happen to be rational maps), and only over $\R$.
Beck's ``two extremities'' theorem \cite[Theorem 3.1]{B83} can be phrased in terms of the rational map $L:\R^2\times\R^2\to \R^2$ that maps a pair of points to the line spanned by the pair (for instance represented by the point in $\R^2$ whose coordinates are the slope and intercept of the line).
Beck's theorem says that $|L(P\times P)| = \Omega_c(|P|^2)$, unless $P$ has at least $c|P|$ points on a line.
Raz and Sharir \cite{RS15a} work with the map that sends a pair of points to the line that consists of all the points that span a unit area triangle (of a fixed orientation) with the pair.
They prove a superlinear bound on the number of distinct values of this map, for point sets with not too many points on a line (this statement is not made explicit in the paper, but is implicit in the proof).
Finally, Lund, Sheffer, and De Zeeuw \cite{LSZ14} study the rational map $\mathcal{B}:\R^2\times\R^2\to \R^2$ that sends a pair of points to the line that is their perpendicular bisector.
They prove that $|\mathcal{B}(P\times P)| = \Omega_{M,\eps}(|P|^{8/5-\eps})$ if $P\subset \R^2$ has at most $M$ points on a line or circle.

Elekes and Szab\'o \cite{ES12} proved a more general theorem in this vein (their ``Main Theorem'' \cite[Theorem 27]{ES12}).
It is difficult even to state this theorem precisely, so we give only a rough description.
%For any $d$ and $D$ there is a constant $M$ and a constant $\eta>0$ such that the following holds.
Let $Y$ be a $D$-dimensional irreducible variety over $\C$ of degree at most $d$, and let $X\subset Y\times Y\times Y$ be an irreducible $2D$-dimensional subvariety of degree at most $d$ with surjective and generically finite projections onto any two of the three factors.
Let $A\subset Y$ be a finite set, which is in very general position in the following sense:
For any proper subvariety $Z\subset Y$ of degree at most $M$ we have $|A\cap Z|\leq N$.
Then 
\[|X\cap (A\times A\times A)| = O_{d,D,M,N}(|A|^{2-\eta})\]
with $\eta>0$ depending on $d,D,M,N$,
unless $X$ is in some specific way related to an algebraic group.

The results of \cite{B83, RS15a, LSZ14} mentioned above fit into this framework: Given a reasonable map $\mathcal{F}:\C^2\times \C^2\to\C^2$, its graph $X \subset \C^2\times\C^2\times\C^2$ is a variety that satisfies the conditions of the Main Theorem of Elekes and Szab\'o.
Note that \cite{B83, RS15a, LSZ14} not only provide explicit values of $\eta$ in certain special cases, but they also make the ``very general position'' condition more precise in those cases. 
Indeed, they replace the condition that $P$ avoids any algebraic curve (a proper subvariety of $\C^2$) of bounded degree, 
with the condition that $P$ avoids only lines in the case of \cite{B83, RS15a}, or lines and circles in \cite{LSZ14}.

\subsection{Expanding polynomials over other fields}

Let us finish by briefly discussing Elekes-R\'onyai-type questions over a finite field $\mathbb{F}_q$ and the field $\mathbb{Q}$ of rational numbers.
We focus on statements of the form ``$|f(A\times A)|=\Omega(|A|^{1+c})$ for a polynomial $f$ that is not of a special form'';
there has been much work on conditional expansion bounds (especially over finite fields), of the form ``if $|A+A|$ is small, then $|f(A\times A)|$ is large'', but we will not discuss these here.

Over finite fields, the question is considerably harder, because there is not yet any finite field equivalent of an incidence bound for algebraic curves like Theorem \ref{thm:SZ}.
For simplicity, let us restrict to a prime field $\mathbb{F}_p$, and let us ignore small $p$.
There is a dichotomy between \emph{small} subsets of $\mathbb{F}_p$, 
where very few incidence bounds are known,
and \emph{large} subsets of $\mathbb{F}_p$, for which various techniques can be used to obtain incidence bounds.

For instance, Bourgain \cite{B05} used the incidence bound of Bourgain, Katz, and Tao \cite{BGK04} for small subsets of $\mathbb{F}_p$ to prove the following expansion bound.
For $f(x,y) = x^2+xy$ and $A\subset \F_p$ with $|A|<p^{c'}$, we have $|f(A\times A)| >|A|^{1+c}$, with $c>0$ depending on $c'<1$.
This bound has been improved slightly, and generalized to some other polynomials, but the range of polynomials for which such bounds are known remains limited; see Aksoy Yazici et al. \cite{AMRS15} for some recent developments.

For large subsets of finite fields (think of $|A|>p^{7/8}$), 
more comprehensive bounds have been proved.
Bukh and Tsimerman \cite{BT12} proved that $|f(A\times A)| = \Omega_d(|A|^{1+c})$ for $|A|>p^{7/8+c'}$, with $c>0$ depending on $c'$, 
if $f\in \mathbb{F}_p[x,y]$ is monic in each variable, not of the form $p(q(x,y))$ with $\deg p\geq 2$, and not of the form $g(x)+h(y)$ or $g(x)h(y)$.
The exceptional cases here are close to those in  Theorem \ref{thm:ERRSS}.
Tao \cite{T15} then proved that $|f(A\times A)| = \Omega_d(p)$ for $|A|>p^{15/16}$, 
unless $f\in \mathbb{F}_p[x,y]$ is additive or multiplicative, matching the condition in Theorem \ref{thm:ERRSS}.
The proofs in \cite{BT12,T15} both used fiber products and Cauchy-Schwarz (somewhat like in Subsection \ref{subsec:ESproof}),
and both relied on the bound of Lang and Weil \cite[Theorem 1]{LW54} for points on varieties over finite fields.
Some of the ideas in \cite{T15} played a role in the proof of Theorem \ref{thm:ESRSZ} in \cite{RSZ15}.
In \cite[Section 9]{BT12}, an Elekes-Szab\'o-type statement over finite fields is conjectured.

\bigskip

One can also ask the Elekes-R\'onyai question over $\Q$.
Of course, Theorem \ref{thm:ERRSS} gives a bound there, but it is not clear that every exceptional polynomial from Theorem \ref{thm:ERRSS} is also exceptional over $\Q$.
For instance, although $f(x,y)=x^2+y^2$ is additive, the construction from Subsection \ref{subsec:exppoly} would require us to choose $A$ so that $A^2$ is an arithmetic progression, which is not possible in $\Q$.
Solymosi made the following conjecture.

\begin{conjecture}
Let $f\in \Q[x,y]$ be a polynomial of degree $d$.
For $A\subset \Q$ we have
\[|f(A\times A)| = \Omega_d(|A|^{1+c}),\]
unless $f(x,y) = g(ax+by)$ or $f(x,y) = g((x+a)^\alpha(y+b)^\beta)$ for $a,b\in \Q$ and positive integers $\alpha, \beta$.
\end{conjecture}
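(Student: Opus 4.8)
Since $\Q\subset\R$, Theorem~\ref{thm:ERRSS} already gives $|f(A\times A)|=\Omega_d(n^{4/3})$ unless $f$ is additive or multiplicative, so it remains to treat $f(x,y)=g(h(x)+k(y))$ and $f(x,y)=g(h(x)\cdot k(y))$ for univariate $g,h,k$. A descent argument --- the ``direction'' of the decomposition is read off from $f_x/f_y$, which lies in $\Q(x,y)$ and equals $h'(x)/k'(y)$ in the additive case, a quotient of logarithmic derivatives in the multiplicative case --- lets us take $g,h,k\in\Q[\,\cdot\,]$. The truly degenerate subcases then coincide with the excepted forms: $\deg h=\deg k=1$ in the additive case gives $f=g_1(ax+by)$, and $h,k$ both constants times powers of linear polynomials in the multiplicative case gives $f=g_1((x+a)^{\alpha}(y+b)^{\beta})$ (and, as the $S$-unit argument below indicates, the multiplicative form with $a\neq b$ actually still expands, so the excepted list as stated is mildly redundant). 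If instead $\deg h\geq 2$ while $\deg k=1$ in the additive case, then $|f(A\times A)|\geq|h(A)+k(A)|/\deg g$, and $|h(A)+k(A)|=\Omega_d(n^{5/4})$ by the Elekes--Nathanson--Ruzsa bound and its extensions \cite{ENR99}, which hold over $\R$ (crucially using that the product is $A\times A$). Thus the open cases are: (a) additive with $\deg h,\deg k\geq 2$; (b) multiplicative with $h$ or $k$ not a constant times a power of a linear polynomial.

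\textbf{Step 2: reduce to energy of polynomial images.}
In case (a), writing $h(A)+k(A)$ for $\{h(a)+k(b):a,b\in A\}$, two applications of Cauchy--Schwarz give $|h(A)+k(A)|\geq |A|^{4}(\deg h\cdot\deg k)^{-2}E^{+}(h(A),k(A))^{-1}$ and $E^{+}(h(A),k(A))\leq(E^{+}(h(A))E^{+}(k(A)))^{1/2}$, where $E^{+}(S)=\#\{(s_1,s_2,s_3,s_4)\in S^{4}:s_1-s_2=s_3-s_4\}$. So case (a) follows from the estimate $E^{+}(h(A))=O_{d}(n^{3-c})$ for every $A\subset\Q$ of size $n$ and every $h\in\Q[x]$ of degree $\geq 2$; in particular any fixed power saving over the trivial bound $n^{3}$ suffices. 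Similarly case (b) reduces, after discarding the $O(d)$ zeros of $h$ in $A$, to the multiplicative analogue $E^{\times}(h(A))=O_{d}(n^{3-c})$, where $E^{\times}$ counts solutions of $h(a)h(d)=h(b)h(c)$, for $h$ not a constant times a power of a linear polynomial. Over $\R$ both estimates fail --- take $A$ so that $h(A)$ is an arithmetic, respectively geometric, progression --- so the arithmetic of $\Q$ is essential.

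\textbf{Step 3: the arithmetic input.}
The multiplicative estimate I would attack directly. If $E^{\times}(h(A))$ is close to $n^{3}$, then by the multiplicative Balog--Szemer\'edi--Gowers and Freiman theorems a positive proportion of $A$ maps under $h$ into a multiplicative generalized progression of bounded rank and size $O(n)$, hence into a finitely generated subgroup $\Gamma\subset\Q^{*}$ of bounded rank; writing $h=c\prod(x-r_i)^{m_i}$, this forces each $a-r_i$ (over a large subset of $A$) into a common finitely generated group, and if $h$ has two distinct roots $r_i\neq r_j$ the equation $(a-r_i)-(a-r_j)=r_j-r_i$ then has many solutions with both terms in that group, contradicting the $S$-unit equation theorem. (This also covers the multiplicative excepted form with $a\neq b$.) For the additive estimate, the same structure theory places a positive proportion of $h(A)$ inside a generalized arithmetic progression $P$ of bounded rank $r$ and volume $V=O(n)$, and it remains to bound by $O(V^{1-\delta})$ the number of values a degree-$\geq 2$ polynomial over $\Q$ takes in $P$. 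For $r=1$ this is exactly the classical bound $O(L^{1-\delta})$ for the number of perfect $m$-th powers in an arithmetic progression of length $L$ (Bombieri--Granville--Pintz and its refinements); for bounded $r$ it should follow from the Bombieri--Pila / Heath-Brown determinant method applied to the rational curve $h(x)=\ell(\vec t)$ (with $P$ the image of a box under an affine map $\ell$), after clearing the denominators introduced by $P$.

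\textbf{Main obstacle.}
The crux is extracting a genuine power saving. Freiman's theorem is quantitatively clean only when the doubling constant is bounded, in which case the above yields an energy bound of the shape $E^{+}(h(A))\leq(1-\eps_0)n^{3}$ --- giving only a constant-factor gain in $|h(A)+h(A)|$, and at best, by running the argument for every constant threshold, the bound $|f(A\times A)|=\omega(n)$, which would already be new over $\Q$ but falls short of $\Omega(n^{1+c})$. Reaching the conjectured power saving seems to need either (i) a direct estimate $E^{+}(h(A))=O_{d}(n^{3-c})$ uniform over all $A\subset\Q$ --- already for $h(x)=x^2$ this is the nontrivial assertion that $a^2+b^2=c^2+d^2$ has $O(n^{3-c})$ solutions with $a,b,c,d$ ranging over an \emph{arbitrary} $n$-element rational set, a statement about the anatomy of norms from $\Z[i]$ rather than about integers in a box --- or (ii) a Freiman-type structure theorem with polynomial control on the rank of the progression in this algebraic setting. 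I expect route (i) to be the more tractable one, combining unique-factorization and divisor-bound arguments for small $h$ with the determinant method for general $h$; the multiplicative half of the conjecture, by contrast, looks essentially within reach of the $S$-unit equation.
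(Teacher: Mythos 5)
This statement is Solymosi's conjecture, which the paper records as open and for which it supplies no proof; so there is nothing in the paper to compare your argument against, and the only question is whether your proposal actually closes the problem. It does not, and you essentially say so yourself. Steps 1--2 are a sensible reduction (modulo some care needed to descend the real additive/multiplicative decomposition of Theorem~\ref{thm:ERRSS} to one with $g,h,k\in\Q[\cdot]$, which you assert but do not carry out), but the entire content of the conjecture is then concentrated in Step 3, where the two estimates you need --- $E^{+}(h(A))=O_d(n^{3-c})$ and $E^{\times}(h(A))=O_d(n^{3-c})$ uniformly over arbitrary finite $A\subset\Q$ --- are not proved and are themselves open in essentially full strength. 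Your own example makes the point: already for $h(x)=x^2$ the additive estimate is the assertion that $a^2+b^2=c^2+d^2$ has $O(n^{3-c})$ solutions over an arbitrary $n$-element rational set, which is not a known theorem but a well-known open problem about the additive energy of squares; citing it as the target of a ``unique-factorization and divisor-bound argument'' is not a proof.

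The structural route you sketch as a fallback also cannot deliver the claimed bound. Balog--Szemer\'edi--Gowers plus Freiman only gives a generalized progression whose rank and volume blow up superpolynomially (indeed at best quasi-polynomially, and in the classical form exponentially) in the doubling parameter, so when the energy is merely $n^{3-o(1)}$ you obtain a progression of rank $n^{o(1)}$; the $S$-unit equation bounds you then want to invoke degrade exponentially in the rank of the unit group, and the Bombieri--Pila/determinant-method counts degrade with the rank of the progression. This is exactly why the chain yields at most a constant-factor or $\omega(n)$ gain, as you concede in your ``Main obstacle'' paragraph --- and even that weaker conclusion is only asserted, not derived. So the proposal is a reasonable research plan identifying the right arithmetic inputs, but it leaves the decisive power-saving step entirely open; the conjecture remains unproved.
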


 %=========================================================================================
 %=========================================================================================
 %=========================================================================================

\end{document}